\definecolor{MyDarkBlue}{rgb}{0,0.08,0.50}
\definecolor{BrickRed}{rgb}{0.65,0.08,0}
\newtheorem{Lemma}{Lemma}[section]
\newtheorem{Theorem}[Lemma]{Theorem}
\newtheorem{Assumption}[Lemma]{Assumption}
\newtheorem{Proposition}[Lemma]{Proposition}
\newtheorem{Corollary}[Lemma]{Corollary}
\newtheorem{remark}[Lemma]{Remark}
\newcommand{\U}{\mathbf{U}}
\newcommand{\Y}{\mathbf{Y}}
\newcommand{\s}{\quad}
\newcommand{\R}{\mathbb{R}}
\newcommand{\W}{{W}}
\newcommand{\non}{\nonumber}
\newcommand{\GFI}{GFI\xspace}
\newcommand{\GFD}{GFD\xspace}
\newcommand{\DGE}{DGE\xspace}
\newcommand{\beq}{\begin{eqnarray*}}
\newcommand{\eeq}{\end{eqnarray*}}
\newcommand{\beqn}{\begin{eqnarray}}
\newcommand{\eeqn}{\end{eqnarray}}
\newcommand{\bt}{\begin{Theorem}}
\newcommand{\et}{\end{Theorem}}
\newcommand{\bas}{\begin{Assumption}}
\newcommand{\eas}{\end{Assumption}}
\newcommand{\lt}{\left}
\newcommand{\rt}{\right}
\newcommand{\be}{\begin{equation}}
\newcommand{\ee}{\end{equation}}
\newcommand{\mb}[1]{{\mathbf #1}}
\newcommand{\X}{\mathbf{X}}
\newcommand{\GF}{\text{Generalized Fiducial}}
\numberwithin{equation}{section}
\definecolor{darkgreen}{rgb}{0,.4,0}
\definecolor{darkagenta}{rgb}{.5,0,.5}
\definecolor{darkred}{rgb}{1,0,0}%was 0.85
\definecolor{darkblue}{rgb}{0,0,.4}
\begin{document}
	\author{Abhishek Pal Majumder
	\thanks{
Department of Mathematical Science,
University of Copenhagen,
Universitetsparken 5, 
DK-2100 K\o{}benhavn \O{} ,
Email: {\tt Abhishek@math.ku.dk}
	}
	\and
	Jan Hannig
	\thanks{Department of Statistics and Operations Research
	The University of North Carolina
	304 Hanes Hall
	Chapel Hill, NC 27510. Email: {\tt jan.hannig@unc.edu}
	}
	\thanks{This work was supported in part by the National Science Foundation under Grant No. 1016441 and 1512945.}
	}

\title{Higher order asymptotics of Generalized Fiducial Distribution }

\maketitle

\begin{abstract}
Generalized Fiducial Inference (\GFI) is motivated by R.A. Fisher's approach of obtaining posterior-like distributions when there is no prior information available for the unknown parameter. Without the use of Bayes' theorem GFI proposes a distribution on the parameter space using a technique called increasing precision asymptotics  \cite{hannig2013generalized}. In this article we analyzed the regularity conditions under which the Generalized Fiducial Distribution (\GFD) will be first and second order exact in a frequentist sense. We used a modification of an ingenious technique named ``Shrinkage method" \cite{bickel1990decomposition}, which has been extensively used in the probability matching prior contexts, to find the higher order expansion of the frequentist coverage of Fiducial quantile. We identified when the higher order terms of one-sided coverage of Fiducial quantile will vanish and derived a workable recipe for obtaining such GFDs. These ideas are demonstrated on several  examples. 
\end{abstract}

\section{Introduction}

The philosophy of Generalized Fiducial Inference evolved from R.A. Fisher's fiducial argument. Fisher couldn't accept the Bayes/Laplace postulate for the non-informative prior. He argued
\begin{quote}
``{Not knowing the chance of mutually exclusive events and knowing the chance to be equal are two quite different states of knowledge}"\cite{syversveen1998noninformative}.
\end{quote}
Fisher only approved the usage of \textbf{Bayes' theorem} for the case of informative priors since imposing any measure on the parameter space is contrary to ``no-information" assumption. But Fisher's proposal created some serious controversies once his contemporaries realized that this approach often led to procedures that were not exact in frequentist sense and did not possess other properties claimed by Fisher. In a complete manner \cite{hannig2009generalized} gives a list of all references regarding this and subsequent Fiducial approaches.

 Much after Fisher, in context of generalized confidence interval Tsui, Weerahandi \cite{tsui1989generalized, weerahandi1995generalized} suggested a new approach for constructing hypothesis testing using the concept of generalized P-values. Hannig et al \cite{hannig2006fiducial} made a direct connection between fiducial intervals and generalized confidence intervals and proved asymptotic frequentist correctness of such intervals. These ideas took a general shape in \cite{hannig2009generalized} through applications in various parametric model formulations which is now termed as Generalized Fiducial Inference (in short \GFI). From Fisher \cite{fisher1935fiducial, fisher1933concepts} one of the goals of Fiducial inference had been to formulate a clear and definite principle that would guide a statistician to a unique fiducial distribution. GFI does not have such aim and is quite different from that perspective. It treats the techniques as a tool in order to propose a distribution on the parameter space when no prior information is available and uses this distribution to propose useful statistical procedures for uncertainty quantification like an approximate confidence interval, etc.

In last decades there had been a surge of parallel endeavors in modern modifications of fiducial inference. These approaches are well known under a common name: \textit{``distributional inference"} or \textit{fusion learning}. Main emphasis for these approaches was defining inferentially meaningful probability statements about subsets of the parameter space without the need for subjective prior information. They include the \textit{``Dempster Shafer theory" }(Dempster, \cite{dempster2008dempster}; Edlefsen, Liu and Dempster,\cite{edlefsen2009estimating}) and \textit{inferential models} (Martin, Zhang and Liu \cite{martin2010dempster}; Zhang and Liu \cite{zhang2011dempster}; Martin and Liu \cite{martin2014conditional, martin2013inferential, martin2013marginal}). There is another rigorous framework available called \textit{Objective Bayesian inference} that aims at finding nonsubjective model based priors. An example of a recent breakthrough in this area is the modern development of reference priors (Berger, \cite{berger1992development}; Berger and Sun\cite{berger2008objective}; Berger, Bernardo and Sun \cite{berger2009formal, berger2012objective}; Bayarri et al.\cite{bayarri2012criteria}). Another related approach is based on higher order likelihood expansions and implied data dependent priors (Fraser, Fraser and Staicu\cite{fraser2010second}; Fraser\cite{fraser2004ancillaries, fraser2011bayes}; Fraser and Naderi \cite{fraser2008exponential}; Fraser et al.\cite{fraser2010default}; Fraser, Reid and Wong\cite{fraser2005model}). A different frequentist approach namely confidence distributions looks at the problem of obtaining an inferentially meaningful distribution on the parameter space (Xie and Singh\cite{xie2013confidence}). Recently, Taraldsen and Lindqvist \cite{taraldsen2013fiducial} show how some simple fiducial distributions that are not Bayesian posteriors naturally arises within the decision theoretical framework. 

Arguably, Generalized Fiducial Inference has been on the forefront of the modern fiducial revival. The strengths and limitations of the fiducial approach are starting to be better understood; see especially Hannig \cite{hannig2009generalized,hannig2013generalized}. In particular, the asymptotic exactness of fiducial confidence sets, under fairly general conditions, was established in Hannig \cite{hannig2013generalized}; Hannig, Iyer and Patterson \cite{hannig2006fiducial}; Sonderegger and Hannig \cite{sonderegger2014fiducial}. 

{Main aim of this article is to further study exactness property of the Fiducial quantile in frequentist sense for uni-parameter cases with exploration of higher order asymptotics. From a different point of view it can be seen as a prudent way of selecting a data generating equation (to be defined shortly) so that the non-uniqueness issue of proposing Generalized Fiducial Distribution (in short \GFD) can be reduced partially.} We start with with the definition of \GFD. 

Denote the parameter space by $\Theta$. Let the data $\mb X$ be a $\mb S$ valued random variable. \GFD starts by expressing a relationship between the parameter and the data through a deterministic function $G:\mb M \times\Theta \to \mb S$ which we call \textit{data generating equation} (in short \DGE): 
\beqn
\mathbf{X}=\mb G(\mb U,\theta).\label{gen1}
\eeqn
Here $\mb U$ is a $\mb M$ valued random variable whose distribution doesn't depend on $\theta$. The distribution of the data {$\mathbf X$} is determined by { $\mb U$} via (\ref{gen1}). That is one can generate { $\mathbf X$} by generating {$\mb U$} and plugging it into the data generating equation.

 For example for one sample of $N(\theta,1)$ the \DGE is
$$G(U,\theta)= \theta+ \Phi^{-1}(U)$$
where $\Phi(.)$ cumulative Normal distribution function and $U \sim U(0,1)$. One can always find (\ref{gen1}) by following construction. 
 For a realization $\mb x_{0}:=(x_1,x_2,\ldots x_n)$ of $\mb X$ where $\mb X \sim F_{\theta}(.)$ for $F_{\theta}$ being a distribution function on $\R^{n}$ with $\theta\in\Theta$ being the unknown parameter denote the conditonal distributions of first, second and $n$-th co-ordinate (sequentially given the rest) by $F_{\theta,X_1}(\cdot), F_{\theta,X_2 | X_{1}}(\cdot), $ and $F_{\theta,X_n | (X_{1},X_{2},\ldots, X_{n-1})}(\cdot)$ respectively.  Then (\ref{gen1}) can be written as
\beqn
x_1 &=& F^{-1}_{\theta,X_1}(U_1)\non\\
x_2 &=& F^{-1}_{\theta,X_2\big| \{X_{1}=x_{1}\}}(U_2)\non\\
x_{n}&=&F^{-1}_{\theta,X_n\big| \{(X_{1},X_{2},\ldots, X_{n-1})=(x_1,x_2,\ldots x_{n-1})\}}(U_{n})
\eeqn
where $(U_{1},U_{2},\ldots,U_{n})$ iid copies of Uniform $(0,1)$ random variables. Note that in the above illustration changing the order of the variables $(X_{1},\ldots,X_{n})$ could give different data generating equations. % unless $X_{1},X_{2},\ldots,X_{n}$ are independent. 

After observing $\mb x_{0}$, given $U,$ define the inverse image $Q_{\mathbf x_{0}}(U)$ as
\[Q_{\mathbf x_{0}}(U):=\{\theta: G(U,\theta)=\mathbf x_{0}\}.\]
Fiducial approach instructs us to deduce a {distribution} for {$\theta$} from the randomness of {$U$}  and the \DGE 
via the inverse image $Q_{\mathbf x_{0}}(U)$, i.e.,  generate an independent copy  {$U^\star$} and invert the structural equation solving for { $\theta=Q_{\mathbf x_{0}}(U^\star)$} to obtain a random estimator of the parameter. 

Now in order to remove the possibility of non-existence of solution for some $U^*$, we will discard such values, i.e, condition the distribution of $U^\star$ given the fact the solution always exists, i.e $Q_{\mathbf x_{0}}(U)\neq \emptyset$. Consequently, the Fiducial distribution of $\theta$ given observed $\mathbf x_{0}$ should be heuristically (hence ill defined) the following conditional distribution 
\beqn
Q_{\mathbf x_{0}}(U^*)\bigg| \lt\{Q_{\mathbf x_{0}}(U^*)\neq \emptyset\rt\}.\label{fid1}
\eeqn

Immediately three relevant questions arise regarding the non-uniquenesses of Generalized Fiducial distribution (\ref{fid1}):
\begin{itemize}
\item \textbf{The choice among multiple solutions:}
It arises if the inverse image $Q_{\mathbf {x_{0}}}(U^*)$ has more than one element for $U^*$ and observed $\mathbf {x_{0}}$. This problems mainly occur in discrete distributions which we did not consider in this article (see \cite{hannig2013generalized}). %Moreover in asymptotic regime all those choices will lead to the same distribution limit.
\item \textbf{Borel Paradox:} Another important problem regarding computing the conditional probability in (\ref{fid1}) arises when the conditioning event $\{Q_{\mathbf x_{0}}(U^*)\neq \emptyset\}$ has probability $0.$ For example, suppose one observes $\X=\mb x_{0}:= (x_{1},\ldots,x_{n})'$  from the location normal model with the data generating equation $\X = \theta + \U_{n\times 1}$ where $\U\sim N_{n}(0,I_{n})$. The inverse image 
\[
 Q_{\mb x_{0}}(\mathbf{U}^*_{})= \begin{cases}
         (x_{1}-U^*_{1}) &\mbox{if $U^*_{2}-U^*_{1}=x_{2}-x_{1},U^*_{3}-U^*_{1}=x_{3}-x_{1}, \ldots ,U^*_{n}-U^*_{1}=x_{n}-x_{1}$,}\\
         \emptyset & \mbox{otherwise,}
         \end{cases}
 \] and the set $\{Q_{\mb x_{0}}(\mathbf{U}^*_{})\neq \emptyset\}$ has probability $0$ (an $n-1$ dimensional hyperplane in the $n$ dimensional Euclidean space). In that case the conditional probability distribution may not remain unique which in literature is termed as the Borel paradox.

 This problem can also be remedied by defining the \GFD as the distribution of the weak limit of the following quantity (in the display) conditioned on the event $\left\{\inf_{\theta} \big\|\mathbf {x_{0}} - G(U^*,\theta)\big\| \le \epsilon\right\}$ as $\epsilon \downarrow 0$.
\begin{eqnarray}
\arg \inf_{\theta}\big\|\mathbf {x_{0}} - G(U^*,\theta)\big\| \bigg| \left\{\inf_{\theta} \big\|\mathbf {x_{0}} - G(U^*,\theta)\big\| \le \epsilon\right\}.\label{fid2}
\end{eqnarray}
Let's assume that for each fixed $\theta \in \Theta$ the function $G(\cdot, \theta)$ is one-to-one and continuously differentiable denoting the inverse by $G^{-1}(x, \theta)$. If we use $L_{\infty}$ norm as $\|\cdot\|$ in the definition of \eqref{fid2}, from Theorem 3.1 of \cite{hannig2013generalized} it follows that the unique weak limit is a distribution on $\Theta$ with density
\beqn
f^{\mathcal{G}}\big(\theta \big|\mathbf {x_{0}}=\{ X_{1},X_{2},\ldots,X_{n}\}\big) = \frac{f_{\X}\lt(\mathbf{x_{0}}\big|\theta\rt) J_{n}(\mathbf{x_{0}},\theta)}{\int_{\mathbb{R}}f_{\X}\lt(\mathbf{x_{0}}\big|\theta'\rt) J_{n}(\mathbf{x_{0}},\theta')d\theta'},\label{fiddef}
\eeqn
where in the one parameter $(p=1)$ case, the Jacobian becomes 
\begin{eqnarray}
J_{n}(\X,\theta) \propto \sum_{i=1}^{n} \Bigg|\frac{\partial}{\partial \theta}G_i(\U,\theta)\bigg|_{\U = \mb G^{-1}(\X,\theta)}\Bigg|.\label{jacobian}
\end{eqnarray}
In what follows by Fiducial distribution (or density) we will mean the distribution(or density) of $\theta$ defined in (\ref{fid2}). Denote the probability distribution on $\Theta$ induced by the data generating equation $\mathcal{G}$ in (\ref{gen1}) by $P^{\mathcal{G}}(\cdot)$ whose density is (\ref{fiddef}).
\end{itemize}
As an example let $\X$ be a sample of $n$ iid observations from {\em Scaled Normal family} $N(\mu,\mu^{q}), \mu>0$ with $q>0$ known.  The simplest data generating equation comes from the relation $\X=G(\mathbf{U},\theta):=\mu+\mu^{\frac{q}{2}}\mathbf{U},$ where $\mathbf{U}=(U_1,U_2,\ldots,U_{n})$ is an array of $n$ i.i.d $N(0,1)$ random variables. Since the derivative of the $i$th coordinate $\frac{d}{d\mu}G_i(U_i,\mu)=1+\frac{q}{2}\mu^{q/2-1} U_i,$ and $U_i=\frac{x_i-\mu}{\mu^{q/2} },$ the Jacobian  in (\ref{jacobian}) simplifies to
\beqn 
J_n(\mathbf x,\mu)\propto  \sum\limits_{i=1}^n \left| 1+\frac{q(X_i-\mu)}{2\mu}\right|\label{ex1}
\eeqn
with the corresponding \GFD specified in (\ref{fiddef}).

In context of non-informative prior for any one-to-one function $\phi(.),$ inference of $\theta$ given $\X$ and inference of $\phi(\theta)$ given $\X$ should not be different since the \textit{ideal non-informative prior should not impose any extra information on $\Theta$}  \cite{syversveen1998noninformative}. Just like Posterior distribution of \textbf{Jeffrey's prior}, Generalized Fiducial distribution of $\theta$ as defined in (\ref{fid2}) exhibits this \textbf{parametrization invariance} property. %For some one-one function $\phi$ suppose $\eta=\phi(\theta).$  The Fiducial distributions of $\theta$ and $\eta$ denoted respectively as $r$ and $\tilde{r}$ follow
%$$\int_{A} r(\theta)d\theta = \int_{\phi^{-1}(A)} \tilde{r}(\eta)d\eta$$
%and it follows trivially from the definition (\ref{fid2}). 

%
%Choice of $L_{\infty}$ norm in (\ref{fid2}) can be intuitively justified from natural discretization point of view. That is for the observed $\mathbf{x}$ one can think a box (an interval for each dimension) around it, in order to get a $\theta \in \left\{\theta:\inf_{\theta} \big\|\mathbf {x_{0}} - G(U^*,\theta)\big\| \le \epsilon\right\}.$ Also another reason is independence among the dimensions which helps in the calculation.

\subsection{The Choice of Structural Equations}

%Note that changing norm in (\ref{fid2}) leads to different fiducial distributions \cite{hannig2013generalized}. 
While the \GFD is invariant to re-parametrization, it is not invariant to changes in the data generating equation.
We illustrate this issue on the following example:

 Let $\mb X=(X_1,\ldots,X_{n})$ be $n$ iid realizations from a distribution with density $f(\cdot\big|\theta)$ parametrized by one dimensional parameter $\theta\in \R$ and consider \DGE $\mb X=\mb G(\mb U,\theta)$. Suppose $T(\cdot)$ is absolutely continuous, one to one transformation and denote the derivative $w(x)=T'(x)$. Now considering the transformed data  $\mb Y:= T(\mb X) = T\circ \mb G(\mb U,\theta).$ Then (\ref{fiddef}) implies that the density of fiducial distribution of $\theta$ based on the transformed \DGE for $\mathbf y$ is
 \begin{equation}
f^{\mathcal{G}}(\theta\big|\mb Y=\mb y)= \frac{f_{\mb Y}\lt(\mathbf{y}\big|\theta\rt) J_{n}(\mathbf y,\theta)}{\int_{\Theta}f_{\mb Y}\lt(\mathbf{y}\big|\theta'\rt) J_{n}(\mathbf y,\theta')d\theta'} %=\frac{\prod_{i=1}^{n}f\lt(T^{-1}(\mb y_{i})\big|\theta\rt)\big|(T^{-1})'(\mathbf{y}_{i})\big| J(\mb y,\theta)}{\int_{\Theta}\prod_{i=1}^{n}f\lt(T^{-1}(\mb y_{i})\big|\theta\rt)\big|(T^{-1})'(\mathbf{y}_{i})\big|  J(\mb y,\theta')d\theta'}\non\\
= \frac{\prod_{i=1}^{n}f_{}\lt(\mathbf{\mathbf{x}}_{i}\big|\theta\rt) J\lt(B(\mb x),\theta\rt)}{\int_{\Theta}\prod_{i=1}^{n}f_{}\lt(\mathbf{x}_{i}\big|\theta'\rt) J_n(T(\mb x),\theta')d\theta'}\label{tr1}
\end{equation}
with the Jacobian $J_n(T(\mb X),\theta)$ 
 \begin{equation}
\sum_{i=1}^{n} \Bigg|\frac{\partial}{\partial \theta}T_i\circ \mb G(\U,\theta)\bigg|_{\U = \mb G^{-1}(\X,\theta)}\Bigg|%=\sum_{i=1}^{n} \Bigg| \left[T'(G(U_{i},\theta))\frac{\partial}{\partial \theta} G(U_{i},\theta)\right]\bigg|_{{U_{i}= G^{-1}(X_{i},\theta)}}\Bigg|\non\\
=\sum_{i=1}^{n} \Bigg| w(X_{i})\frac{\partial}{\partial \theta} G_i(\U,\theta)\bigg|_{\U = \mb G^{-1}(\X,\theta)}\Bigg|.\label{tr}
\end{equation}
Note that (\ref{tr1}) differs from (\ref{fiddef}) computed based on the original \DGE only in the form of the Jacobian function (\ref{tr}). In particular (\ref{tr}) has an extra weight factor $w(x_i)$ absent in (\ref{jacobian}).
Intuitively fiducial distribution changes due to the deformation of the  neighborhoods of the data $\mathbf{x}$ through the transformation $T$ and the consequent change in the shape of the conditioning event $\inf_{\theta} \left\|T(\mathbf {x}) - T\circ \mb G(\mb U^*,\theta)\right\| \le \epsilon$ in the definition (\ref{fid2}).

If the \DGE for each individual $X_i,\ i=1,\ldots, n$ is the inverse cdf $X_i=G(U_i,\theta)=F^{-1}(U_i,\theta)$ then (\ref{tr}) becomes
\begin{equation}
J_{n}(T(X),\theta) \propto   \sum_{i=1}^{n}\bigg|w(X_{i}) \frac{\frac{\partial F_{i}(X_{i},\theta)}{\partial \theta}}{f_{}(X_{i},\theta)}\bigg|.\label{examplejac}
\end{equation}

We conclude that smooth and one-one transformations of the original \DGE results in introduction of a weight $w(\cdot)$ in the Jacobian function. The question of interest is, \textit{what is an ``ideal" transformation $T(\cdot)$ for which the Fiducial distribution enjoys some ``desirable" properties.} In particular we will consider \DGE {desirable} if it has some good frequentist properties.

 In what follows we first give an ideal recipe for a special case when some strong monotonicity conditions are satisfied. In the absence of those conditions, we study quality of the \GFD through higher order asymptotics.  The main goal of this article  is to obtain transformation $T(\cdot)$ so that the data generating equation $``T(\mb X)= T\circ G(\mb U, \theta)"$ will give a first order probability matching Fiducial distribution (to be defined in (\ref{matching})). We will conclude with some examples for which we derive higher order matching fiducial distribution and study its properties using small sample situations.

\section{Why Fisher Might Have Thought Fiducial Distribution Exact and Unique?}
Fisher \cite{fisher1935fiducial} developed  the Fiducial idea in conjunction with the concept on the minimal sufficient statistics. Using the same motivation we state the following theorem considering \GFD based on $S=G_S(\mb{U},\theta)$ the data generating equation for a one dimensional statistics $S$.  

Denote the distribution function of $S$ by $F_S(s,\theta)$, the inverse image of the \DGE by $Q_{s}(\mb u)=\{\theta\,:\, s=G_S(\mb u,\theta)\}$ and an independent copy of $\mb U$ by $\mb U^\star$ .
The \GFD as defined in (\ref{fid2}) is the conditional distribution
 $Q_{s}(\mb U^\star)\mid \{Q_{s}(\mb U^\star)\neq\emptyset\}$ provided $P(Q_{s}(\mb U^\star)\neq\emptyset)>0$, In this simplest of settings we typically observe that the $P(Q_{s}(\mb U)\neq\emptyset)=1$ in which case \GFD is the unconditional distribution of $Q_{s}(\mb U^\star)$

\bt\label{e3.1}
  Let us assume that 1) for all $\mb u,$ the function $G_S(\mb{u},\theta)$ is non-decreasing in $\theta$ and 2) for all $\mb u$ and $\theta$ we have $Q(s,\mb u)\neq\emptyset$. Then the inverse image $Q(s,\mb u)$ is an interval with bounds $Q_{s}^{-}(\mb u)\leq Q_{s}^+(\mb u)$. Additionally, for any $s_0$ and $\theta_0$ we have $   P(Q_{s_{0}}^{+}(\mb U^\star) \leq  \theta_0)=1-\lim_{\epsilon\downarrow 0} F_S(s_0,\theta_0+\epsilon)$
and $    P(Q_{s_{0}}^{-}(\mb U^\star) \leq \theta_0)= 1-\lim_{\epsilon\downarrow 0} F_S(s_0-\epsilon,\theta_0).$  

If additionally 3) for all $\theta_0$ and $s_0$ the $P_{\theta_0}(S=s_0)=F_S(s_0,\theta_0)-\lim_{\epsilon\downarrow 0} F_S(s_0-\epsilon,\theta_0)=0$ then the distribution function $F_S(s,\theta)$ is continuous as a function of $\theta$, $Q_{s_{0}}^{+}(\mb U^\star) = Q_{s_{0}}^{-}(\mb U^\star)$ with probability $1$, and 
\begin{equation}\label{contSfid}
   P(Q_{s_0}(\mb U^\star) \leq  \theta_0)=1-F_S(s_0,\theta_0).
\end{equation}
\et
\begin{proof}[Proof of Theorem \ref{e3.1}]
 The fact that $Q(s,\mb u)$ is an interval follows by monotonicity. Consider an iid sample $\mb U_1^\star,\ldots ,\mb U_n^\star.$ By SLLN we have
 \beqn
  P(Q_{s_0}^+(\mb U^\star) <  \theta_0) &=& \lim_{n\to\infty} \frac 1n \sum_{i=1}^n I_{\{Q^+_{s_0}(\mb U_i^\star)<\theta_0\}}                                          =\lim_{n\to\infty} \frac 1n \sum_{i=1}^n I_{\{G_S(\mb U_i^\star,\theta_0)>s_0\}}
                                                   =1-F_S(s_0,\theta_0),\nonumber
 \eeqn
 where the second equality follows from monotonicity of $G_S$. The result follows by taking a limit. 
 (Notice that the distribution function $F_S(s,\theta)$ is non-increasing in $\theta$.)
 Similarly
\beqn
  P(Q^-_{s_0}(\mb U^\star) >  \theta_0) = \lim_{n\to\infty} \frac 1n \sum_{i=1}^n I_{\{Q^-_{s_0}(\mb U_i^\star)>\theta_0\}}
                                                   =\lim_{n\to\infty} \frac 1n \sum_{i=1}^n I_{\{G_S(\mb U_i^\star,\theta_0)<s_0\}}
                                                   = \lim_{\epsilon\downarrow 0} F_S(s_0-\epsilon,\theta_0).\nonumber \eeqn 
 Finally, if  $\lim_{\epsilon\downarrow 0} F_S(s_0,\theta_0+\epsilon)-F_S(s_0,\theta_0)<0$ then $P_{\theta_0}(S=s_0)>0$. The rest of the proof follows by simple comparison.
\end{proof}

\begin{remark}
To understand the main message of Theorem~\ref{contSfid} assume 1) and 2) and denote by $C^+_\alpha(s)$ the $1-\alpha$ quantile of $Q_{s}^{+}(\mb U^\star)$, i.e.  $C^+_\alpha(s)=\sup_c\{P(Q_{s}^{+}(\mb U^\star) < c)\leq 1- \alpha\}$. 
Since \[\{s: \theta\leq C^+_\alpha(s)\}\supset \{s: P(Q_{s}^{+}(\mb U^\star) < \theta)\leq 1-\alpha\}=\{s: F_S(s,\theta)\geq \alpha\},\]
then $P_\theta(\theta\leq C^+_\alpha(s))\geq P_\theta(F_S(S,\theta)\geq\alpha)\geq 1-\alpha$ and the 
set $(-\infty, C^+_\alpha(s))$ forms a $(1-\alpha)$ level upper confidence bound. Notice that the coverage is guaranteed to be either exact or conservative.

Similarly, if $C^-_\alpha(s)=\inf_c\{P(Q_{s}^{-}(\mb U^\star) > c)\leq 1- \alpha\}$ then $P_\theta(\theta\geq C^-_\alpha(S))\geq 1-\alpha$ and 
the  set $(C^-_\alpha(s),\infty)$ forms a $(1-\alpha)$ level upper confidence bound.

If additionally 3) is satisfied, then the coverage of the one sided confidence bounds is exact (not conservative). We will say that in this case the \GFD is exact.

Finally we remark that Theorem~\ref{contSfid} also implies that the \GFD is the same for all possible \DGE{}s satisfying 1) and 2).
\end{remark}

\begin{remark}
If $G_S(\mb{u},\theta)$ is non-increasing in $\theta$ then a similar theorem can be proved by a simple re-parametrization.
\end{remark}

Now we will generalize Theorem \ref{e3.1} beyond the existence of $1$-dimensional sufficient statistics under the following assumption:
\bas\label{Ascor}
Let us consider a data generating equation $  \mb X=\mb G(\mb U,\theta).$ Assume that there exists a \textit{one-one} $\mathcal{C}^{1}$ \textit{ transformation}  $(S(\mb X), A(\mb X))$ of $n$ dimensional $\mb X,$ such that $S(\mb X)$ is one dimensional and $\mb A(\mb X)$ is an $(n-1)$ dimensional vector of ancillary statistics (more precisely the function $\mb A(\mb G(\mb U,\theta))$ is invariant in $\theta$).
\eas
After the transformation $(S,\mb A)$ on the initial data generating equation $\mb X=\mb G(\mb U,\theta),$ the new one can be written as 
\begin{equation}\label{seS}
 s=G_S(\mb{U},\theta):=S\circ G(\mb{U},\theta),\s\text{and} \s \mb a=\mb G_{\mb A}(\mb{U}):=A\circ G(\mb{U},\theta).
\end{equation}

%Thus by using the chain rule and some simple calculus the Jacobian based on (\ref{seS}) is
%\beqn
% J((S,\mb A)(\mb X),\theta)= \Bigg|\sum_{i=1}^n \frac {d S(\mb X)}{d X_i}  \frac {d G_i(\mb U,\theta)}{d\theta}\bigg|_{\mb U=\mb G^{-1}(\mb X,\theta)}\Bigg|+0. \label{coeq0}
%\eeqn
%This means the weight we can consider is $w(x_i)=\frac {d S(\mb X)}{d X_i}$. 
%Here $\mb{U}$ is a random variable or vector with a known distribution independent of the parameter $\theta\in\mathbb R$.  
If $P(Q_{s}(\mb U)\neq\emptyset)=1$, then 
a simple calculation shows that the \GFD as defined in (\ref{fid2}) is the distribution of $Q_{s}(\mb U_a^\star)$ where 
$U_a^\star$ has as its distribution the conditional distribution of $\mb U\mid \{\mb G_{\mb A}(\mb U) =\mb a\}$.
Denote the conditional distribution function  $S(\mb X)\mid \{\mb A(\mb X)=\mb a\}$ by $F_{S\mid \mb a}(\cdot,\theta)$. 
The following corollary is immediate:
%Note that the fiducial density of $\theta$ given $\{(S,\mb A)(\mb X)\}$ is proportional to
%\beqn
%J((S,\mb A)(\mb X),\theta)f(\mb X,\theta) \propto J((S,\mb A)(\mb X),\theta)f_{ S\mid  \mb A}(s,\theta) \label{coeq}
%\eeqn
%since $f_{ A}(\mb a,\theta)$ does not depend on $\theta$. Let $G_{S\mid \mb a}:=G_{S}(\mb u,\theta)\Big |_{\{u: \, G_{A}(u,\theta)=\mb a\}}$ and $\mb U_{\mb a}$ is a realization of the conditional distribution $\mb U\mid \{\mb A(\mb X) =\mb a\}$.

\begin{Corollary}\label{cor1}
Under Assumption \ref{Ascor}, suppose the $G_S(\mb{u},\theta)$ satisfies conditions in Theorem~\ref{contSfid}. Then the 
conclusions of Theorem~\ref{contSfid} remain satisfied with $F_S(s,\theta)$ replaced by $F_{S\mid \mb a}(\cdot,\theta)$.
%Under Assumption \ref{Ascor}, suppose $G_S(\mb{u},\theta)$ is non-decreasing in $\theta$ and for all $\mb u$ and $\theta$. Then the generalized fiducial inference based on the data generating equation $$S = G_{S\mid \mb a}(\mb U_{\mb a},\theta)$$ is exact.
\end{Corollary}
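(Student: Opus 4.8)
The plan is to observe that the proof of Theorem~\ref{e3.1} never uses the specific law of the driving variable $\mb U^\star$; that law enters only through the distribution function $F_S$ of $S=G_S(\mb U,\theta)$. Conditioning on the ancillary statistic therefore amounts to nothing more than replacing $\mb U^\star$ by $\mb U^\star_a$ everywhere, and correspondingly $F_S$ by the conditional distribution function $F_{S\mid\mb a}$. So the corollary should follow by re-running the SLLN argument of Theorem~\ref{e3.1} essentially verbatim, with the conditional law in place of the marginal one.

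First I would invoke the reduction already recorded before the statement: under Assumption~\ref{Ascor} with $P(Q_s(\mb U)\neq\emptyset)=1$, the \GFD is the law of $Q_s(\mb U^\star_a)$, where $\mb U^\star_a$ is distributed as $\mb U\mid\{\mb G_{\mb A}(\mb U)=\mb a\}$. Next I would note that conditions 1) and 2) of Theorem~\ref{e3.1} are pointwise statements about the map $\theta\mapsto G_S(\mb u,\theta)$ and do not refer to the distribution of $\mb U$ at all; hence they continue to hold under the conditional law, $Q_s(\mb u)$ is still an interval $[Q_s^-(\mb u),Q_s^+(\mb u)]$, and the crucial pointwise equivalence $\{Q_{s_0}^+(\mb u)<\theta_0\}=\{G_S(\mb u,\theta_0)>s_0\}$ (together with its lower analogue) remains valid for each fixed $\mb u$.

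Then I would apply the SLLN to an i.i.d.\ sample $\mb U_1^\star,\ldots,\mb U_n^\star$ drawn from the conditional law $\mb U\mid\{\mb G_{\mb A}(\mb U)=\mb a\}$. The empirical averages $\frac1n\sum_i \1_{\{G_S(\mb U_i^\star,\theta_0)>s_0\}}$ now converge to $P(G_S(\mb U^\star_a,\theta_0)>s_0)$. Here ancillarity is what makes the identification clean: because $\mb G_{\mb A}(\mb U)$ is free of $\theta$, the event $\{\mb A(\mb X)=\mb a\}$ coincides with $\{\mb G_{\mb A}(\mb U)=\mb a\}$ for every $\theta$, so this limit equals $1-F_{S\mid\mb a}(s_0,\theta_0)$. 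Substituting into the two displays of the original proof yields $P(Q_{s_0}^+(\mb U^\star_a)\le\theta_0)=1-\lim_{\epsilon\downarrow 0}F_{S\mid\mb a}(s_0,\theta_0+\epsilon)$ and the matching lower bound, i.e.\ the conclusions of Theorem~\ref{e3.1} with $F_S$ replaced by $F_{S\mid\mb a}$. Part 3) is identical: if $F_{S\mid\mb a}(\cdot,\theta)$ carries no atom and is continuous in $\theta$, then $Q_{s_0}^+=Q_{s_0}^-$ conditionally almost surely and \eqref{contSfid} holds with $F_{S\mid\mb a}$.

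The hard part will not be the SLLN bookkeeping but justifying the reduction step rigorously: the conditioning event $\{\mb G_{\mb A}(\mb U)=\mb a\}$ has probability zero, so one cannot condition naively without risking a Borel paradox. I would make this precise through the $\epsilon\downarrow 0$ limiting definition \eqref{fid2} together with the smooth-conditioning result (Theorem~3.1 of \cite{hannig2013generalized}), using the $\mathcal{C}^{1}$ one-one assumption in Assumption~\ref{Ascor} to guarantee that the pair $(S,\mb A)$ admits a genuine joint density and hence that the conditional distribution function $F_{S\mid\mb a}(\cdot,\theta)$ is well defined and inherits the continuity in $\theta$ needed for part 3).
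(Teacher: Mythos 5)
Your proof is correct and follows essentially the same route as the paper: the paper likewise reduces the corollary to the observation that the \GFD is the law of $Q_{s}(\mb U_a^\star)$ with $\mb U_a^\star$ distributed as $\mb U\mid\{\mb G_{\mb A}(\mb U)=\mb a\}$ (a distribution free of $\theta$ by ancillarity), after which the argument of Theorem~\ref{e3.1} applies verbatim with $F_{S\mid \mb a}$ in place of $F_S$ --- indeed the paper simply declares the corollary ``immediate'' from this reduction. Your added care about the measure-zero conditioning event (via the $\epsilon\downarrow 0$ definition and the $\mathcal{C}^1$ one-one assumption) is a refinement of a step the paper dismisses as ``a simple calculation,'' not a different approach.
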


\subsection{Examples}\label{s:examples}
In this section we consider a few simple examples of one-parameter problems. The first two are examples that satisfy conditions of Corollary~\ref{Ascor} and therefore lead to exact \GFD. The other two examples do not satisfy these conditions and we need a way of selecting between the different potential \DGE{}s.

\begin{enumerate}[(A)]
\item \textbf{Location family}: Let $X_{i}=\theta + U_i,\ i=1,\ldots, n$ where $U_i$ are iid. The following one-one transformation
\[T: \X \to (\bar{X}_{n},(X_{1}-\bar{X}_{n},\ldots,X_{n-1}-\bar{X}_{n})):= (S(\X),\mb A(\X))\] is one-one and $\mb A$ is ancillary.  By Corollarry~\ref{cor1} the \GFD based on $S(\X)\mid A(\X)$ is exact. Moreover, the Jacobian $J_n(s,\theta)\propto 1$ and the \GFD is the same as posterior for Jeffreys prior.

\textbf{Scale family} Let $X_{i}=\theta U_i,\ i=1,\ldots, n$ where $U_i>0$ are iid. Set $\tilde{X}_{n}$ as the geometric mean of $\mb X$
and notice that the one-to-one
\[T_{1}: \X \to \lt(\tilde{X}_{n},\left(\frac{X_{1}}{\tilde{X}_{n}},\ldots,\frac{X_{n-1}}{\tilde{X}_{n}}\right)\rt):= (S_{1}(\X),\mb A_{1}(\X))\]
again satisfies Assumption~\ref{Ascor}.
Consequently, the fiducial distribution based on $\tilde{X}_{n}$ conditional on $A_{1}(\X)$ is exact. Again, the Jacobian $J_n(s,\theta)\propto \theta^{-1}$ and the \GFD is the same as posterior for Jeffreys prior.

For a concrete examples, models $U(\theta,\theta+1)$, Cauchy with unknown location parameter $\mu$ and known scale, or 
Cauchy with scale parameter $\sigma$ and known location that do not have a one dimensional sufficient statistic still all have exact \GFD.

\item {\bf Exponential Family:} Let $\mb X=(X_1,\ldots, X_n)$ be i.i.d.\ sample from an natural exponential family with one parameter $\eta$;  
$f_{X}(x|\eta)=h(\mb x)e^{\eta S(x)-C(\eta)}$.  Since $S_n(\mb X)=\sum_{i=1}^n S(X_i)$ is complete sufficient statistics it isi ndependent of any ancillary statistics  by Basu's Theorem \cite{CasellaBerger2002}. Thus we can base our inference only on $S(\mb X)$ using directly Theorem~\ref{contSfid}. 

To this end, let us assume the inverse distribution function \DGE  $X_i=F^{-1}(U_i,\eta)$ and assume 
 $S(x)$ is a smooth one-to-one function on the domain of $X$. The verify the monotonicity condition of Theorem \ref{e3.1} we need to verify that for each $x$ the distribution function $F_X(x,\eta)$ is monotone onto $(0,1)$ function of $\eta$. For more detailed discussion of propertied of \GFD for exponential family consult \cite{veronese2014fiducial}.

If $X$ is continuous the density of the fiducial distribution is given by (\ref{examplejac}) with $w(x)=S'(x)$ and
\[
\frac{\partial F(x,\eta)}{\partial \eta}=E_{\eta}[S(X)1_{\{X\le x\}}] - C'(\eta)F(x,\eta),
\]
which should not alter its sign for every $x,  \eta$. 

As an example consider the Gamma$(\theta,1)$, i.e., $f_{\theta}(x)=\frac{e^{-x}x^{\theta-1}}{\Gamma(\theta)}.1_{\{x>0\}}.$ The statistic $S(X)=\log X$ is smooth and one-one. The distribution function is incomplete Gamma function which, for each $x$, is a function decreasing in $\theta$ onto $(0,1)$. Thus the \GFD is exact. Notice that the weight  function in (\ref{examplejac}) is $w(x)=x^{-1}$. 
As opposed to the location and scale parameter model discussed above, the \GFD for Gamma distribution does not coincide with a Bayesian posterior for any prior.

%A very similar results on exponential family were also derived in \cite{veronese2014fiducial}. Following table accounts some other examples under the same umbrella, for which the Generalized Fiducial distribution based on $S(X)$ is exact. 
%
%\begin{center}
%    \begin{tabular}{| c | c || c | }
%    \hline
%    Family: $\mathcal{P}_{\theta},\s \theta\in \Theta:=\R^{+}$ & $\frac{\partial F(x,\theta)}{\partial \theta}$ (maintains sign in $\theta$) & $S'(X)$ \\ \hline
%        Exponental (with mean $\theta$) & $-xe^{-\frac{x}{\theta}}$  & Constant \\ \hline 
%    Weibull with scale $\theta$ (with known $k$)\\  \cline{2-3} density: $f_{\theta}(x)=\frac{k}{\theta} (\frac{x}{\theta})^{k-1}e^{-(\frac{x}{\theta})^{k}}1_{\{x\ge 0\}}$&  $-x^{k}e^{-\frac{x^{k}}{\theta^{k}}}$ & $X^{k-1}$ \\  \hline
%    Pareto with $f_{\alpha}(x)=\frac{\theta x^{\theta}_{m}}{x^{\theta+1}}1_{\{x\ge x_{m}\}}$ \\ \cline{2-3} ($x_{m}$ known)& $-\log\lt(\frac{x}{x_{m}}\rt)\lt(\frac{x}{x_{m}}\rt)^{-\theta}$ & $\frac{1}{X}$\\ \hline
%    \end{tabular}
%\end{center}

\item  \textbf{Scaled normal family} $N(\mu,\mu^{q})$, $\mu>0$ with $q>0$ known. When $q=2$ the model is a scale family model of Example (A) and \GFD is exact. For $q\neq2,$ the assumption of Corollary \ref{cor1} will not hold. There are several \DGE one can consider for this model. 

For sample size $n>1,$ we considered the fiducial distribution based on the simplest data generating equation in (\ref{ex1}). Two more choices are based on transformations of the two dimensional minimal sufficient statistics:
\begin{align}
 (\bar{X}_{n},S_n)&=\left(\mu+\mu^{q/2} Z,\mu^{q/2} U^{1/2}\right), \s \label{ex2s}  \\
 (\bar{X}^{2}_{n}\text{sgn}(\bar{X}_{n}),q S^{2}_n)&=\left((\mu+\mu^{q/2} Z)^2\text{sgn}(\mu+\mu^{q/2} Z),q\mu^{q}U\right)\label{ex2}
\end{align}
where $Z\sim N(0,{1}/{n})$ independent of $U\sim \chi^{2}_{n-1}/(n-1)$ and $\text{sgn}(x)=1_{[x>0]}-1_{[x<0]}.$ 
The corresponding Jacobians are
\[
   \s J_{n,2}(\mathbf x,\mu)=\left| 1+\frac{q(\bar x-\mu)}{2\mu}\right|+\frac{q s_n}{2\mu},\s\text{and}\s J_{n,3}(\mathbf x,\mu)=2\bar x_n \left| 1+\frac{q(\bar x-\mu)}{2\mu}\right|+ \frac{q^2 s_n^2}{\mu}.
\] 
%Calculation of $J_3(\mathbf x,\mu)$ is possible almost surely since measure of non-differentiability of $G_{3}(u,\mu)$ in $\mu$ (due to $\text{sgn}(x)$) is $0$.

\item \textbf{Correlation coefficient $\rho\in (-1,1)$ of a Bivariate normal model:} Suppose $(X_1,Y_1),\ldots,(X_n, Y_n)$ be i.i.d. N{\tiny $\left(\begin{pmatrix} 0\\0\end{pmatrix}
           , \begin{pmatrix}
              1 &\ \rho\\
              \rho & 1
              \end{pmatrix}\right)$}. 
               This model has been first proposed by Basu \cite{Basu1964} as an example of a distribution without maximal ancillary statistic.
              Here we consider  three potential data generating equations:

The simplest symmetric data generating equation models the data directly
\[
 (X_i,  Y_i)=B(Z_i,\rho Z_i +\sqrt{1-\rho^2} W_i) + (1-B)(\sqrt{1-\rho^2} Z_i + \rho W_i,W_i),\ i=1,\ldots,n.\] 
 where $Z_i, W_i, i=1,\ldots n$ are iid $N(0,1)$ and $B$ is a single independent Bernoulli(1/2). The Jacobian for this \DGE is
 \[J_{n,1}((\X,\Y),\rho)=\frac{\sum_{i=1}^n |X_i- \rho Y_i|+|\rho X_i- Y_i|}{2(1-\rho^2)}.\]

 We also construct data generating equations based on transformations of the minimal sufficient statistics. Denote 
$V_1:= \frac{1}{2n}\sum_{i=1}^{n}(X_{i}+Y_{i})^{2},V_2 := \frac{1}{2n}\sum_{i=1}^{n}(X_{i}-Y_{i})^2$ and set $U_1,U_2$ as iid $\chi_{n}^{2}/{n}$ . This allows us to form \DGE{}s
\begin{equation}
(V_{1},V_{2})=((1+\rho)U_{1}, (1-\rho)U_{2})\s \text{ and }\s\lt(\frac{1}{V_{1}},\frac{1}{V_{2}}\rt)=\lt(\frac{1}{(1+\rho)U_{1}}, \frac{1}{(1-\rho)U_{2}}\rt)\label{bivnorm}
\end{equation}
with corresponding Jacobians 
\[J_{n,2}((\X,\Y),\rho)=\frac{V_1}{1+\rho}+\frac{V_2}{1-\rho}\s\text{and}\s J_{n,3}((\X,\Y),\rho)=\frac{1}{V_{1}(1+\rho)}+\frac{1}{V_{2}(1-\rho)}.\] 
\end{enumerate}
We discuss a way of selecting between the generating equation in Examples (C) and (D)  in the next section.

\section{Probability Matching Data Generating Equation:}
%Recall for any two sequences $\{a_{n}\}_{n\ge 1},\{b_{n}\}_{n\ge 1}$ we denote $a_{n}=O(b_{n})$ if $\limsup_{n\to \infty}\frac{a_{n}}{b_{n}}<\infty,$  and $a_{n}=o(b_{n})$ if $\lim_{n\to \infty}\frac{a_{n}}{b_{n}}\to 0.$ 
We define $\mathcal{G}_{s}$ as the \textbf{Probability Matching Data Generating Equation} of order $s\in\mathbb{N}$ if
\beqn
 P_{\theta_{0}}\lt[\theta_{0}<\theta^{1-\alpha}(\X,\mathcal{G}_{s})\rt]=P^{\mathcal{G}_{s}}(\theta < \theta^{1-\alpha}(\X,\mathcal{G}_{s})\mid \X)+o(n^{-\frac{s}{2}})\label{matching}
\eeqn
where $\theta^{1-\alpha}(\X,\mathcal{G}_{s})$ is the upper $(1-\alpha)$-th quantile of the $\GF$ Distribution $P^{\mathcal{G}_{s}}(\cdot\mid \X).$ In other words it characterizes the corresponding data generating equation, so that the frequentist coverage of the $(1-\alpha)$-th Fiducial quantile matches $(1-\alpha)$ upto rate  $o(n^{-\frac{s}{2}}).$ 

We plan to guide our choice of \DGE based on the frequentist coverage in (\ref{matching}). The reason for this is that the higher $s$ is the better the well one sided quantile of \GFD behaves \textit{asymptotically in frequentist sense}. 
%For illustration, suppose we had generated $m$ batches of $n$ (fixed) i.i.d samples from the original distribution $F(.|\theta_{0})$. With each batch of $n$ samples one computes the Fiducial $(1-\alpha)$-th quantile. Then we find how many times out of $m,$ $\theta_0$ is less than the values of those quantiles. Asymptotically as $m\to \infty,$ that proportion will converge to  $P_{\theta_{0}}\lt[\theta_{0}<\theta^{1-\alpha}(\X)\rt]$ which should be ``close" to $(1-\alpha)$. In fact as $n\to \infty$ from \cite{hannig2009generalized} following holds $$\lim_{n\to \infty}P_{\theta_{0}}\lt[\theta_{0}<\theta^{1-\alpha}(\X)\rt]=1-\alpha$$ which is (\ref{matching}) 
% for $s=0.$ A stronger result would be finding an asymptotic (as $n\to \infty$) expansion of $P_{\theta_{0}}\lt[\theta_{0}<\theta^{1-\alpha}(\X)\rt]$ and finding conditions under which the coefficients of first few order terms will be $0$ (getting (\ref{matching}) for larger $s>0$). That is the purpose of this probability matching idea.
This choice has been motivated by
probability matching priors in {non-subjective bayesian paradigm}.
\cite{Ghosh2011, syversveen1998noninformative, WelchPeers1963}. In fact one criteria for  judging a quality of a non-informative prior is the frequentist coverage of $(1-\alpha)$th posterior regions.  An ideal non-informative prior should match all order terms at the true parameter value but constructing is often impossible. In \GFI, the challenge translates into finding the data generating equation for which the fiducial quantile has the exact ideal coverage (i.e least influence on the parameter space). Similar to Bayesian paradigm achieving exactness is often impossible (but see Corollary~\ref{Ascor}). Finding \DGE  that has either first or second order matching quantiles is a more generally achievable goal. 

In one parameter models it is well-known that when regularity conditions are satisfied, Jeffreys prior is the only non-data dependent prior with posterior that is first order matching \cite{datta2004probability, WelchPeers1963}. Notice that because the Jacobian function (\ref{tr}) is data dependent, \GFD can still be second order matching even though Jeffreys prior is only rarely second order matching.

\subsection{Main Results}

In this section we present the main theoretical contribution  of this paper, Theorem~\ref{uni}. The detailed statement of the assumptions are listed in Appendix~\ref{s:assumptions}. 

Assumption~\ref{str} is a standard set of conditions to ensure a valid higher order likelihood expansion of the loglikelihood $L_{n}(\theta):=\frac{1}{n}\sum_{i=1}^{n}  \log f(X_{i},\theta)$  \cite{datta2004probability}. Assumptions~\ref{ext} is commonly used to control the tail behavior of the log-likelihood and \GFD \cite{hannig2013generalized}.

Assumption~\ref{ext2} controls the behavior of  Jacobian $J_{n}(\X,\theta)$ as $n\to\infty$.
For example, it assumes the locally uniform (in $\theta$ on a neighborhood of $\theta_0$) convergence of
\[J_{n}^{(i)}(\X,\theta) \to J^{(i)}(\theta_0,\theta),\]
where $J_{n}^{(i)}(\X,\theta)=\frac{\partial^{i} J_{n}(\X,\theta)}{\partial\theta^i}$ and $J^{(i)}(\theta_0,\theta)=\frac{\partial^{i} J(\theta_0,\theta)}{\partial\theta^i}$ are the derivatives of the Jacobian function and its limit. 
Finally,  Assumption~\ref{as4} is needed for the regularity of the second order term and is not necessary if only first order expansion was needed. For example it assumes the existence of
\[
a_{i}(\theta_{0})=\lim_{n\to \infty}E_{\theta_{}}\sqrt{n}\lt[J_{n}^{(i)}(\X,\hat{\theta}_{n}) - J^{(i)}(\theta_{0},\hat{\theta}_{n})\rt],
\]
where $\hat\theta$ is the MLE of $\theta$.

Let $\phi(z)$ and $z_{\alpha}$ be the density and  $(1-\alpha)$-th quantile of Normal distribution. Denote the Fisher information as $I_\theta$ and define $m_3(\theta)=E_{\theta}\lt[\frac{\partial^3}{\partial\theta^3}\log f(X, \theta)\rt]$. Now we will state the main result for first and second order terms in expansion of the coverage of one sided fiducial intervals. 
\bt\label{uni}
Suppose Assumptions  \ref{str},\ref{ext},\ref{ext2},\ref{as4} hold with $m=2.$ Then for fixed values of the true parameter $\theta_{0}\in\Theta\subset\mathbb{R}$ 
\beqn
P_{\theta_{0}}\bigg[\theta_{0}\le \theta^{1-\alpha}(\mathcal{G},\X,n)\bigg]&=& \big(1-\alpha\big) + 
\frac{\phi(z_\alpha)\Delta_1(\mathcal{G})}{\sqrt{n}}+\frac{z_\alpha\phi(z_\alpha)\Delta_2(\mathcal{G})}{n}+o\bigg(\frac{1}{n}\bigg),\s\s\text{where}\non
\eeqn
\beqn
\Delta_1 (\mathcal{G})&:=& \bigg[I_{\theta_{0}}^{-\frac{1}{2}} \frac{\partial}{\partial \theta}\log J(\theta_0,\theta) + \frac{\partial }{\partial \theta}I^{-\frac{1}{2}}_{\theta}\bigg]\Bigg|_{\theta=\theta_{0}}, \label{et2a}\s\s\s\\
\Delta_2(\mathcal{G}) &:=& J(\theta_0,\theta_0)^{-1}\bigg[\frac{1}{6}\frac{\partial}{\partial \theta}\lt\{I^{-2}_{\theta}J(\theta_0,\theta)m_3(\theta)\rt\}- \frac{1}{2}\frac{\partial^2}{\partial \theta^2}\left\{J(\theta_0,\theta)I^{-1}_{\theta}\right\}\bigg]\Bigg|_{\theta=\theta_0}\non\\&&+\frac{I^{-\frac{1}{2}}_{\theta_{0}}}{z_{\alpha}J(\theta_0,\theta_0)}\bigg[a_{1}(\theta_{0}) - a_{0}(\theta_{0}) \frac{\partial}{\partial \theta}\log J(\theta_0,\theta)\bigg]\Bigg|_{\theta=\theta_0}.\,\label{secondord}
\eeqn
\et

\begin{Corollary}\label{cor3}
Under Assumptions  \ref{str},\ref{ext},\ref{ext2} with $m=1,$ $\mathcal{G}_{1}$ will be the first order Probability Matching \DGE if $\Delta_1 (\mathcal{G}_{1}) =0.$

Under Assumptions  \ref{str},\ref{ext},\ref{ext2},\ref{as4} with $m=2,$ $\mathcal{G}_{2}$ will be the second order Probability Matching  if $\s\s\Delta_1 (\mathcal{G}_{2}) =0, \s\text{and}\s \Delta_2(\mathcal{G}_{2}) =0.$
\end{Corollary}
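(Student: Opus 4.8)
The plan is to read Corollary~\ref{cor3} directly off the asymptotic expansion supplied by Theorem~\ref{uni}, matching it term by term against the defining relation (\ref{matching}). First I would unpack the right-hand side of (\ref{matching}): since $\theta^{1-\alpha}(\X,\mathcal{G}_s)$ is by definition the upper $(1-\alpha)$-th quantile of the (continuous) fiducial distribution $P^{\mathcal{G}_s}(\cdot\mid\X)$, the fiducial probability $P^{\mathcal{G}_s}\big(\theta<\theta^{1-\alpha}(\X,\mathcal{G}_s)\mid\X\big)$ equals $1-\alpha$ identically. Hence (\ref{matching}) collapses to the single requirement that the \emph{frequentist} coverage obey $P_{\theta_0}\big[\theta_0<\theta^{1-\alpha}(\X,\mathcal{G}_s)\big]=(1-\alpha)+o(n^{-s/2})$. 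I would also note that under the regularity conditions of Assumption~\ref{str} the true value is attained with probability zero, so the strict event $\{\theta_0<\theta^{1-\alpha}\}$ and the closed event $\{\theta_0\le\theta^{1-\alpha}\}$ appearing in Theorem~\ref{uni} agree up to a null set, which is what licenses identifying the two expansions.

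For the first-order claim I would invoke Theorem~\ref{uni} with $m=1$, giving $P_{\theta_0}[\theta_0\le\theta^{1-\alpha}]=(1-\alpha)+\phi(z_\alpha)\Delta_1(\mathcal{G}_1)n^{-1/2}+o(n^{-1/2})$. Comparing with the reduced order-$s=1$ matching condition, the $n^{-1/2}$ term must be absorbed into the $o(n^{-1/2})$ remainder, and since $\phi(z_\alpha)>0$ for every finite $z_\alpha$ this happens precisely when $\Delta_1(\mathcal{G}_1)=0$. For the second-order claim I would run the same comparison against the full $m=2$ expansion $P_{\theta_0}[\theta_0\le\theta^{1-\alpha}]=(1-\alpha)+\phi(z_\alpha)\Delta_1 n^{-1/2}+z_\alpha\phi(z_\alpha)\Delta_2 n^{-1}+o(n^{-1})$. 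Order-$s=2$ matching forces the remainder to be $o(n^{-1})$, so both explicit terms must vanish: the $n^{-1/2}$ coefficient again yields $\Delta_1(\mathcal{G}_2)=0$, and the residual $n^{-1}$ coefficient $z_\alpha\phi(z_\alpha)\Delta_2(\mathcal{G}_2)$ vanishes iff $\Delta_2(\mathcal{G}_2)=0$.

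Because each direction is a term-by-term reading of a ready-made expansion, there is no genuine analytic obstacle here; all of the work is done inside Theorem~\ref{uni}. The only point requiring care is the bookkeeping at $\alpha=1/2$, where $z_{1/2}=0$ makes the $n^{-1}$ term degenerate and therefore silent about $\Delta_2$. I would accordingly phrase the second-order conclusion as a statement about the whole family of coverage identities indexed by $\alpha$, so that evaluating at any single non-central level $\alpha\neq 1/2$ (where $z_\alpha\neq0$) pins down $\Delta_2(\mathcal{G}_2)=0$; one may then remark that, away from $\alpha=1/2$, the stated conditions are necessary as well as sufficient.
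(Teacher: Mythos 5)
Your proposal is correct and matches the paper's treatment: the paper gives no separate proof of Corollary~\ref{cor3}, treating it exactly as you do --- an immediate term-by-term read-off from the expansion in Theorem~\ref{uni} (with the first-order claim resting on the truncated expansion valid without Assumption~\ref{as4}, as the paper notes in its discussion of the assumptions). Your added refinements --- that the fiducial probability of the quantile is exactly $1-\alpha$ by continuity of the \GFD, that the strict and closed coverage events agree up to a $P_{\theta_0}$-null set, and that sufficiency holds for all $\alpha$ while necessity of $\Delta_2(\mathcal{G}_2)=0$ requires $z_\alpha\neq 0$ --- are correct points the paper leaves implicit.
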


The detailed proofs are in Appendix~\ref{s:proofs}. 
Broadly speaking our proof follows a similar approach as in the Bayesian probability matching literature \cite[Chapter 5]{ghosh1994higher}.
%We start with the expansion of the log likelihood. We have for some $\theta'\in (\hat{\theta}_{n},\hat{\theta}_{n}+\frac{y}{\sqrt{nc}}),$
%\beqn
%n[L_{n}(\hat{\theta}_{n} +\frac{y}{\sqrt{nc}}) - L_{n}(\hat{\theta}_{n})] &=& - \frac{y^2}{2}+ \frac{1}{6}\frac{y^3 L_{n}^{(3)} (\hat{\theta}_{n})}{\sqrt{n}c^{\frac{3}{2}}}+\frac{1}{24}\frac{y^4}{nc^2}L_{n}^{(4)}(\hat{\theta}_{n})+\frac{1}{120}\frac{y^5}{n^{3/2}c^2}L_{n}^{(5)}(\theta')\nonumber\\
%&:=&- \frac{y^2}{2}+ R_{n}(\hat{\theta}_{n})+\frac{1}{120}\frac{y^5}{n^{3/2}c^2}L_{n}^{(5)}(\theta'),\non%\label{likexp}
%\eeqn
%where $R_{n}(\theta):=\frac{1}{6}\frac{y^3 L_{n}^{(3)} (\theta)}{\sqrt{n}c^{\frac{3}{2}}}+\frac{1}{24}\frac{y^4}{nc^2}L_{n}^{(4)}(\theta).$  
In contrast to the Bayesian literature \cite{datta2004probability} the main difference is due to the form of the data dependent Jacobian $J_{n}(\X,\theta)$. 
This is caused by the presence of the extra terms 
 in the expansion of the Jacobian function $J_{n}(\X,\theta)$. These new challenges demonstrate themselves especially in the proof of Lemma~\ref{lem1}.

\section{Recipe For Creating Higher Order Probability Matching Data Generating Equation} 

In this section we provide guidelines on how to identify \DGE with desired matching properties.
 \begin{enumerate}[(a)]
\item \textit{Start with sufficient statistic  $\mb S=(S_{1},S_{2},\ldots,S_{m})$.  From computational point of view prefer $S_{1},S_{2},\ldots,S_{m}$ independent of each other.}
\item \textit{Reverse engineer a smooth transformation $\mb g(\mb S)$ that meets conditions of Corollary~\ref{cor3}.}
\end{enumerate}
Denote
$
 \mathcal{A}_{\mathcal G}:=\{\text{Space of all $C^{1}$ one-one transformations from $R_S$ into $\mathbb R^m$}\},
$ 
where $P_{\theta_0}(\mb S\in R_S)=o(e^{-an})$ for some constant $a>0$. ($R_S$ is the range of $\mb S$ up to an exponential term that does not affect polynomial rates of convergence.)
From Corollary \ref{cor3} we define the set of transformations yielding first and second order probability matching data generating equations respectively as:
\beqn
{\mathcal{A}^{(1)}_{\mathcal{G}}=\{A\in \mathcal{A}_{\mathcal{G}}: \Delta_{1}(\mathcal{G})=0\}, \s\text{and}\s \mathcal{A}^{(2)}_{\mathcal{G}}=\{A\in \mathcal{A}^{(1)}_{\mathcal{G}}:\Delta_{2}(\mathcal{G})=0\}.}\non
\eeqn

We will find the class $\mathcal{A}^{(1)}_{\mathcal{G}}$ for the
two motivating examples.
\begin{enumerate}[(a)]
\item  N{\tiny $\left(\begin{pmatrix} 0\\0\end{pmatrix}
           , \begin{pmatrix}
              1 &\ \rho\\
              \rho & 1
              \end{pmatrix}\right)$}. The generating equation of minimal-sufficient statistics is
\[(S_{1},S_{2})=\bigg(\frac{1}{n}\sum_{i=1}^{n}(X_{i}+Y_{i})^{2},\frac{1}{n}\sum_{i=1}^{n}(X_{i}-Y_{i})^{2}\bigg)=\big((1+\rho)U_{1}, (1-\rho)U_{2}\big).\]
For $A\in  \mathcal{A}_{\mathcal{G}},$ the Jacobian for the transformed $(A_{1}(S_{1}),A_{2}(S_{2}))=\big(A_{1}((1+\rho)U_{1}),A_{2}( (1-\rho)U_{2}\big))$ will be 
\beqn
J^{A}_{n}(\X,\rho)=A'_{1}(S_{1})\frac{S_{1}}{1+\rho}+A'_{2}(S_{2})\frac{S_{2}}{1-\rho} \longrightarrow J^{A}(\rho_{{0}},\rho):=A'_{1}(1+\rho_{{0}})\frac{1+\rho_{{0}}}{1+\rho}+A'_{2}(1-\rho_{{0}})\frac{1-\rho_{{0}}}{1-\rho}\non
\eeqn
as sample size $n\to\infty$.

 The first order class $\mathcal{A}^{(1)}_{\mathcal{G}}$ is found by solving $\Delta_{1}(\mathcal{G})=0,$ which is equivalent to
 \[
 \frac{\frac{\partial}{\partial \rho}J^{A}(\rho_{{0}},\rho)}{J^{A}(\rho_{{0}},\rho_{0})}\Bigg|_{\rho=\rho_{{0}}}
 = \frac{(1+\rho_{{0}}) -(1-\rho_{{0}})\bigg[\frac{A'_{1}(1+\rho_{{0}})}{A'_{2}(1-\rho_{{0}})}\bigg]}{(1-\rho^{2}_{{0}})\bigg(1+\bigg[\frac{A'_{1}(1+\rho_{{0}})}{A'_{2}(1-\rho_{{0}})}\bigg]\bigg)}
 =\frac{3\rho_{{0}}+\rho^{3}_{{0}}}{(1-\rho^{2}_{{0}})(1+\rho^{2}_{{0}})}
  =\frac{1}{2}\frac{I'_{\rho_{{0}}}}{I_{\rho_{{0}}}}
 \]
Consequently
\beqn
{\mathcal{A}^{(1)}_{\mathcal{G}}}= \bigg\{A:=\big(A_{1}(\cdot),A_{2}(\cdot)\big) \in \mathcal{A}_{\mathcal{G}}:\,\,{ A'_{1}(1+{\rho})=A'_{2}(1-{\rho}).\frac{(1-{\rho})^2}{(1+{\rho})^{2}}}, \text{for }|\rho|<1\bigg\}.\s\s\label{class21}
\eeqn
The second proposal in (\ref{bivnorm})  $ A_{1}(x)=A_{2}(x)=\frac{1}{x},$ belongs to this class ${\mathcal{A}^{(1)}_{\mathcal{G}}}$ hence it is first order matching by Theorem \ref{uni} provided we can verify its conditions.

To verify Assumption~\ref{ext2} note that
\beqn
J^{A}_{n}(\X,\hat{\rho}_{n})- J^{A}(\rho_{{0}},\hat{\rho}_{n})&=&[A'_{1}(S_{1}) - A'_{1}(1+\rho_{0})]\frac{S_{1}}{1+\hat{\rho}_{n}}\non\\&&+A'_{1}(1+\rho_{0})\bigg[\frac{S_{1}-(1+\rho_{0})}{1+\hat{\rho}_{n}}\bigg]+[A'_{2}(S_{2})-A'_{2}(1-\rho_{0})]\frac{S_{2}}{1-\hat{\rho}_{n}}\non\\&&+\frac{A'_{2}(1-\rho_{0})}{1-\hat{\rho}_{n}}[S_{2} - (1-\rho_{0})]\non
\eeqn
and 
\beqn
&&J^{A(1)}_{n}(\X,\rho)- J^{A(1)}(\rho_{{0}},\rho)=[A'_{1}(S_{1}) - A'_{1}(1+\rho_{0})] \frac{S_{1}}{1+\hat{\rho}_{n}}+A'_{1}(1+\rho_{0})\times\non\\&& \bigg[\frac{S_{1}-(1+\rho_{0})}{1+\hat{\rho}_{n}}\bigg]+[A'_{2}(S_{2})-A'_{2}(1-\rho_{0})]\frac{S_{2}}{1-\hat{\rho}_{n}}+\frac{A'_{2}(1-\rho_{0})}{1-\hat{\rho}_{n}}[S_{2} - (1-\rho_{0})].\non
\eeqn
Since $S_{1}, S_{2}$ both converge to $(1+\rho_{0}), (1-\rho_{0})$ respectively; using smoothness of $A_{1},A_{2}$ by applying Delta method and Slutsky's theorem one can show that $\sqrt{n}[J^{A}_{n}(\X,\hat{\rho}_{n})- J^{A}(\rho_{{0}},\hat{\rho}_{n})]$ $\big($similarly for $\sqrt{n}(J^{A(1)}_{n}(\X,\rho)- J^{A(1)}(\rho_{{0}},\rho))\big)$ is $O_{{P_{\rho_{0}}}}(1).$ 

Next we will consider whether our \DGE is also second order matching.
To verify Assumption~\ref{as4} note that for $i=0,1$ each of the four terms of $\sqrt{n}[J^{A(i)}_{n}(\X,\hat{\rho}_{n})- J^{A(i)}(\rho_{{0}},\hat{\rho}_{n})]$ by Slutsky's theorem converges to normal with mean zero. Since $S_{1}$ is chi-square so using its exponential concentration property one can prove uniform integrability of each of those terms. So $E_{\rho_{0}}\sqrt{n}[J^{A(i)}_{n}(\X,\hat{\rho}_{n})- J^{A(i)}(\rho_{{0}},\hat{\rho}_{n})]$ asymptotically will converge to the mean of its weak limit which is 0.  Next we compute $\Delta_{2}(\mathcal{G})$ in (\ref{secondord}). 

The last term  in (\ref{secondord}) is $0$ since both of $a_{1}(\rho_{0})=0, a_{2}(\rho_{0})=0.$ Surprisingly, a straightforward calculus exercise reveals that $\Delta_{2}(\mathcal{G})=0$ for any  $A\in {\mathcal{A}^{(1)}_{\mathcal{G}}}$ and consequently 
\[
P_{{\rho_{{0}}}}\bigg[\rho_{{0}}\le \rho^{1-\alpha}(\mathcal{G_{A}},\X,n)\bigg]- \big(1-\alpha\big) = o\bigg(\frac{1}{n}\bigg).%\label{et2b}
\]

\item $N(\mu,\mu^{q})$ for $\mu>0.$ The data generating equation is $$(S_{1},S_{2})=(\bar{X}_{n},S_n):=\bigg(\mu+\mu^{q/2} Z,\mu^{q/2} U^{1/2}\bigg).$$ Again for $A\in \mathcal{A}_{\mathcal{G}},$ the transformed Jacobian will be
\[
 J^{A}_{n}(\X,\mu)=A'_{1}(\bar{X}_{n})\left| 1+\frac{q(\bar{X}_{n}-\mu)}{2\mu}\right| +A'_{2}(S_{n})\frac{q S_{n}}{2\mu} \longrightarrow  J^{A}(\mu_{0},\mu) := A'_{1}(\mu_{0})\left| 1+\frac{q(\mu_{0}-\mu)}{2\mu}\right| +\frac{q}{2}A'_{2}(\mu_{0}^{\frac{q}{2}})\frac{\mu_{0}^{\frac{q}{2}}}{\mu}
\]
Using $ J^{A}(\mu_{0},\mu)$ in the equation  $\Delta_{1}=0,$ one has the following characterization 
\[{\mathcal{A}^{(1)}_{\mathcal{G}}= \big\{A=(A_{1},A_{2})\in \mathcal{A}_{\mathcal{G}}:  A'_{2}(\mathbf{x^{\frac{q}{2}}})=A'_{1}(\mathbf{x}).q\mathbf{x^{\frac{q}{2} - 1}},\text{ for }x> 0\big\}}\s\]
 which is satisfied by our third choice$A_{1}(\mathbf{x})=\mathbf{x}^{2}, A_{2}(\mathbf{y})=q\mathbf{y}^{2}$ in  (\ref{ex2}).

Similar to the arguments for the bivariate normal cases one can argue that assumptions of Theorem~\ref{uni} hold and $a_{1}(\mu_0), a_{0}(\mu_{0})$. Again some straightforward calculus reveals that for any $A\in {\mathcal{A}^{(1)}_{\mathcal{G}}}$ 
\[
\Delta_2(\mathcal{G})= \frac{q(q-2)\mu_{0}^{q+2}\lt(2\mu^{2}_{0}+\mu^{q}_{0}q(q-1)\rt)}{\lt(2\mu^{2}_{0}+\mu^{q}_{0}q^{2}\rt)^{3}}
\]
and consequently 
\beqn
P_{{\mu_{{0}}}}\bigg[\mu_{{0}}\le \mu^{1-\alpha}(\mathcal{G_{A}},\X,n)\bigg]- \big(1-\alpha\big) =
\frac{z_\alpha\phi(z_\alpha)\Delta_2(\mathcal{G})}{n}+o\bigg(\frac{1}{n}\bigg).
\eeqn
We plot the contour plot of $\Delta_2(\mathcal{G})$ as a function of $\mu$ and $q$ in Figure~\ref{fig1}.
The plot shows that for most values of $q$ the $\Delta_2(\mathcal{G})$ takes on small positive values. This corresponds to second order conservative coverage. When $q$ and $\mu$ are both small, the behavior of $\Delta_2(\mathcal{G})>0$ is very erratic with potential for both positive and negative relatively large values.
\begin{figure}[t!]\centering
\includegraphics[width=90mm]{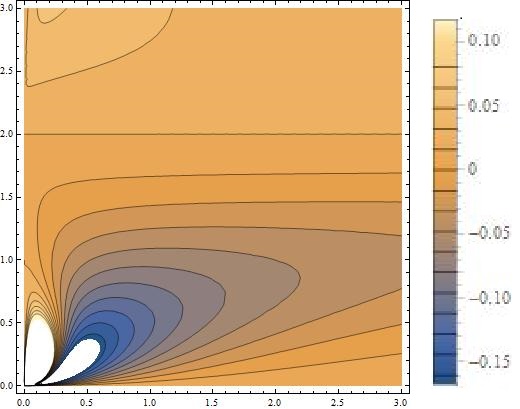}
\caption{Contour plot of $\Delta_2(\mathcal{G})$ with range of $\mu$ (horizontal axis) and $q$ (vertical axis) both $\in (0,3)$ for $N(\mu,\mu^{q})$ example. \label{fig1}
}
\end{figure}

\end{enumerate}
\begin{remark}\label{R4}
This suggested framework is oriented on the data generating equation based on minimal sufficient statistics. One can try transformations based on the simple data generating equation. Further characterizations are possible by imposing two different transformations on two disjoint parts of the data and then getting higher order conditions on the transformations based on the probability matching results.
\end{remark}

\section{Small Sample Simulation}
In this section we describe a result of a small scale simulation study used to evaluate a small sample performance of the various choices of the data generating. We consider two basic examples introduced in Section~\ref{s:examples}; the Bivariate normal  N{\tiny $\left(\begin{pmatrix} 0\\0\end{pmatrix}
           , \begin{pmatrix}
              1 &\ \rho\\
              \rho & 1
              \end{pmatrix}\right)$} example due to Basu \cite{Basu1964} and $N(\mu,\mu^{q})$ with $\mu>0$ unknown and $q=1$ known.

We  compare the following four procedures (listed in the order they appear in the tables): simple fiducial distribution based on modeling the observed data directly (FS), first order matching Fiducial distribution (F1), Posterior based on Jeffrey's prior (BJ), and the second order matching prior (B2) \cite{ong2010data}. All simulation results are computed from 5000 replications.

Tables~\ref{t:rhoOne},~\ref{t:rhoTwo},~\ref{t:rhoLength},~\ref{t:rhoMAD} report simulation results for the Bivariate normal  N{\tiny $\left(\begin{pmatrix} 0\\0\end{pmatrix}
           , \begin{pmatrix}
              1 &\ \rho\\
              \rho & 1
              \end{pmatrix}\right)$} example due to Basu \cite{Basu1964}.
We select  $\rho\in (0.05,0.25,0.5,0.75,0.9)$ and $n=2,3, 4, 5, 10, 100$.  To speed up computations we used$\frac{1}{n}\sum_{i=1}^{n}X_{i}Y_{i}$ instead of the MLE in the second order Bayes method. Since $$\sqrt{n}\lt(\frac{1}{n}\sum_{i=1}^{n}X_{i}Y_{i}-\rho\rt)\sim N(0, \frac{\rho^2 +1}{n}),$$ so using $\frac{1}{n}\sum_{i=1}^{n}X_{i}Y_{i}$ instead of the $\hat{\rho}_{n}$ will have an effect of an order $O(e^{-an})$ which is acceptable for analysis of any polynomial order asymptotics.

Tables~\ref{t:qOne},~\ref{t:qTwo},~\ref{t:qLength},~\ref{t:qMAD} report simulation results for the $N(\mu,\mu^{q})$ with $\mu>0$ unknown and $q=1$.
We select  $\mu\in(0.1,0.5,1,3,5)$ and $n=2,3, 4, 5, 10, 100$. 

Tables~\ref{t:rhoOne}, \ref{t:qOne} report the empirical coverage of the one sided confidence limits for $\alpha=0.025,0.05,0.5,0.95,0.975$. 
Tables~\ref{t:rhoTwo}, \ref{t:qTwo} and \ref{t:rhoLength}, \ref{t:qLength} report the coverage and average length (respectively) of the 
 $95\%$ and $90\%$ two sided equal tailed confidence interval.
Finally, Tables~\ref{t:rhoMAD}, \ref{t:qMAD} report the mean absolute deviation of the point estimators (median) based on each of the distributions (FS, F1, BJ, B2).

The results exhibit mixed behavior of the methods. We observe that FS, the simple fiducial that is not first order matching, was the only method that maintained at least the stated coverage throughout the experiments. The other three methods were sometimes suffering under-coverage for the smallest sample sizes. Higher order matching fiducial gave the shortest confidence intervals for the correlation coefficient problem while the Jeffreys prior gave the shortest intervals (followed closely by the higher order fiducial) for the scaled normal. Overall, even though higher order methods are theoretically superior, it was not clear based on our simulation whether they provide any benefits for smallest sample sizes ($n=2, 3, 4, 5$). More research on this topic will need to be done.

\section{Conclusion and Open questions}

The study of non-informative priors and related variants of distributional inference has a long history and perhaps the best conclusion was made by Kass,Wasserman \cite{kass1996selection}:
\begin{quote}
{``$\ldots$ research on priors chosen by formal rules are serious and may not be dismissed lightly: When sample sizes are small (relative the number of parameters being estimated), it is dangerous to put faith in any default solution; but when asymptotics take over, Jeffreys rules and their variants remain reasonable choices."}
\end{quote}

In this manuscript we found Generalized Fiducial Distribution for $\Theta$ which are (when they are not exact) at least first order matching and sometimes even second order matching even if Jeffreys prior is only first order matching (the bivariate normal example). 
We have conducted a small scale simulation study that showed acceptable performance.

Following are some other open directions left for future work.
\begin{enumerate}
%\item \textbf{Different Choices of norm in (\ref{fid2}):} One of the essential ingredients in our work was the derivation of (\ref{fid2}) with $L_{\infty}$ norm through increasing precision asymptotics. In \eqref{fid2} choosing $L_{\infty}$ is also very intuitive and a lot easier to handle to get the distribution of the weak limit (as $\epsilon\downarrow 0$). A natural extension would be to find the structure of Jacobian for the $L_{2}$ norm which is proved to be $$\sqrt{\det\bigg(\bigg(\frac{d}{d\theta}  \mathbf G(\mathbf u,\theta)\bigg)'\bigg(\frac{d}{d\theta}  \mathbf G(\mathbf u,\theta)\bigg)\frac{}{}\bigg)}\Bigg|_{\mathbf u=G^{-1}(X,\theta)}.$$ So once we have a good handle on that we will be able to generalize the probability matching criteria for fiducial distributions defined under general norm structures ($L_{p}$ for $2<p<\infty$) in \eqref{fid2}. 
\item \textbf{Non-regular cases:} When the true distribution is supported on $(a(\theta),b(\theta))$ with $|a'(\theta)|\le|b'(\theta)|$ (for example $U(\theta,\theta^{2})$ for $\theta>1$) then the condition ``$\{x:f(x\mid\theta)>0\}$ doesn't depend on $\theta$"  of Theorem \ref{uni},  gets violated and the expansion of the fiducial distribution will not converge to Normal distribution anymore. For $U(a(\theta),b(\theta))$ the fiducial distribution of $\theta$ on the basis of $n$ iid observations  (under assumption both $a(\theta)$ and $b(\theta)$ are increasing and continuous in $\theta$)  is
$$f(\theta\mid \X)\propto \frac{a'(\theta)-a(\theta)[\log b(\theta)]'+\bar{X}_{n}[\log b(\theta)]'}{b(\theta)^{n}}.1_{\{a(\theta)-b(\theta)< X_{(1)},\s a(\theta)+b(\theta)>X_{(n)}\}}$$
where $(X_{(1)},X_{(n)})=(\min \X,\max \X),\bar{X}_{n}=\text{mean}(\X).$   Even in probability matching prior context under a more restrictive condition. We expect a similar result to hold in Fiducial context but with a change that should come from the terms similar to $(W_{n}^{(i)}(\X))^{2}.$
\item \textbf{Multi-parameter context:} Proving analogue version of Theorem \ref{uni} in multi-parameter cases where there is only one parameter of interest and rest are nuisance  is more involved. Generally the Jacobian becomes a \textit{U-Statistics}.  Since higher order expansion of fiducial quantile requires  convergence of fluctuation of scaled Jacobian (like Assumption \ref{as4}), deriving concentration properties of  U-Statistics (that the Jacobian for multi-parameter case resembles) is essential  and challenging.
\end{enumerate}

\appendix

\section{Assumptions}\label{s:assumptions}
We start by reviewing some standard notation used below. We then state the assumptions. In the next subsection we discuss how these can be verified.

We consider one dimensional parameter space $\Theta$ containing true $\theta_{0}$. Define $ l(\theta\mid X_i) = \log f(X_{i},\theta)$ as the log-likelihood of $\theta$ given one sample point $X_i$. Denote $l^{(m)}(\theta\mid X_1)$ as the $m$-th derivative of the log likelihood function $l(\theta\mid X_1)$ with respect to $\theta$. Define $L_{n}(\theta):=\frac{1}{n}\sum_{i=1}^{n}l(\theta | X_{i})$ as the likelihood of $\theta$ given $\X$ (which is scaled by $\frac{1}{n}$) and $c:=-\frac{1}{n}\sum_{i=1}^{n} \frac{\partial^2 l(\theta|X_{i})}{\partial \theta^2}.$ We denote the quantity $\sqrt{nc}(\theta-\hat{\theta}_{n})$ by $y$. Note that  $c=L_{n}^{(2)}(\hat{\theta})$, and define  $ a = L_{n}^{(3)}(\hat{\theta}),\quad a_{4} = L_{n}^{(4)}(\hat{\theta})$,   where $\hat{\theta}$ is the maximum likelihood estimate of $\theta_{0}$ (or any solution of $L'_{n}(\theta)=0$). Denote $ \sqrt{nc}(\theta - \hat{\theta})$ by $y$ whose Fiducial expansion will be needed for asymptotic analysis. From now on $\phi(x)$ will denote the density of the Gaussian distribution function (i.e $\frac{1}{\sqrt{2 \pi e}} e^{-\frac{x^{2}}{2}}$). For sake of generality from this section onwards by $J_{n}(\X,\theta)$ we denote \textit{any Jacobian} that appears in the Generalized Fiducial distribution driven by the corresponding data generating equation $\mathcal{G}$.   For $m\ge 1,$ we denote $\frac{\partial J_{n}(X,\theta)}{\partial \theta}\big|_{\theta =\hat{\theta}_n}, \frac{\partial^2 J_{n}(X,\theta)}{\partial \theta^2}\big|_{\theta =\hat{\theta}_n}$,$ \frac{\partial^m J_{n}(X,\theta)}{\partial \theta^m}\big|_{\theta =\hat{\theta}_n}$ by $J'_n(X,\hat{\theta}_{n}),J''_n(X,\hat{\theta}_{n})$, and $J^{(m)}_n(X,\hat{\theta}_{n})$ respectively. We know by virtue of SLLN  pointwise for each $\theta,\s$ $m$-th derivative (w.r.t $\theta$) of the simple Jacobian $J^{(m)}_n(X,\theta)$ in (\ref{jacobian}) scaled by $\frac{1}{n}$ converges to $J^{(m)}(\theta_0,\theta):= \frac{\partial^{m} E_{\theta_{0}}[J(\X,\theta)]}{\partial \theta^m}$ almost surely as $n\to \infty$.
Finally denote $g_1:= J^{(1)}(\theta_0,\theta_0),\s g_2:=J(\theta_{0},\theta_{0})$.
\bigskip

\bas\label{str}
\begin{enumerate}[(a)]
\item The distribution $F(\cdot | \theta)$ are distinct for $\theta\in\Theta$.
\item The set $\{x:f(x |\theta)>0\}$ is independent of the choice of $\theta$.
\item The data $\X=\{ X_1,\ldots,X_{n}\}$ are iid with probability density $f(x |\theta).$
\item There exists $m\ge 1$, such that in a neighborhood $B(\theta_{0},\delta)$ of the true value $\theta_0$, all possible $(m+3)$ ordered partial derivatives $\frac{\partial^{m+3} f(x |\theta)}{\partial \theta^{m+3}}$ exist. For all $i=1,\ldots,m+2;$ the quantities $E_{\theta_{0}}l^{(i)}(\theta_{0}| X_{i})$ are all  finite.
\item There exists a function $M(x)$ such that 
$$\sup_{\theta \in B(\theta_0,\delta)}\big|\frac{\partial^{(m+3)} }{\partial \theta^{(m+3)}}\log f(x\mid \theta)\big|\le M(x)\s\text{and} \s E_{\theta_0}M(X)<\infty.$$ 
\item The information $I(\theta)$ is positive for all $\theta\in B(\theta_{0},\delta)$
\end{enumerate}
\eas

\bas\label{ext}
\begin{enumerate}[(a)]
\item for any $\delta>0,$ there exists an $\epsilon>0$ such that
$$P_{\theta_{0}}\left\{\sup_{\theta \in B(\theta_0,\delta)^{c}}[  L_{n}(\theta) - L_{n}(\theta_0)] \le -\epsilon  \right\}\to 1\s \text{as} \s n\to \infty.$$
\item Let $\mb x=(x_{1},\ldots,x_{n}).$ There exists $s\in\mathbb{N},$ such that $J_{n}(\mb x,\theta)=\sum^{n}_{i=1}J_{i}(\mb x,\theta),$ where for all $\mb x \in \R^n,$ $J_{i}(\mb x,\theta)$ satisfies $$\sup_{i=1,\ldots,n}n^{-s}\int_{\R}J_{i}(\mb x,\hat{\theta}_{n}+\frac{y}{\sqrt{nc}})f_{}(x_{i},\hat{\theta}_{n}+\frac{y}{\sqrt{nc}})dy\,\, <\,\,\infty \s\s\text{a.s}\s  P_{\theta_{0}}.$$

\item The density $f$ satisfies the following property: There exists a constant $c\in [0,1)$
\beqn 
\inf_{\theta \in B(\theta_0,\delta)^{c}} \frac{\min_{i=1,\ldots ,n} \log f(X_{i},\theta)}{n[L_{n}(\theta)-L_n(\theta_0)]} \xrightarrow{P_{\theta_{0}}} c.\label{ex}
\eeqn
\end{enumerate}
\end{Assumption}

\bas\label{ext2}  There exists a function $J(\cdot,\cdot):\Theta\times \Theta \to \R$ with its $i$-th derivative with respect to second argument  $\frac{\partial^{i} J(\theta_1,\theta)}{\partial^{i} \theta} ,$ denoted by $J^{(i)}(\theta_1,\theta)$  (where $J^{(0)}(\theta_1,\theta):=J(\theta_1,\theta));$ such that following conditions hold.
\begin{enumerate}[(a)]
\item There exists $m\ge 1,$ such that for each $i=0,\ldots,m+1$ the Jacobian $J_{n}^{(i)}(\X,\theta),$ satisfies
\beqn\label{wald}
\sup_{\theta \in B(\theta_0,\delta)}\bigg|J^{(i)}_{n}(\X,\theta)- J^{(i)}(\theta_0,\theta)\bigg| \to 0 \s\s a.s. \s P_{\theta_0}.
\eeqn

Namely a uniform convergence result holds over a neighborhood of true parameter value $\theta_0$ for each $ i=0,\ldots,m+1$ uniformly as $n\to\infty$.  
\item The function $J(\cdot,\theta)$ doesn't vanish in $\theta \in B(\theta_0,\delta)$ for any $\delta>0$.
\item For $i=0,\ldots,m+1$ the quantities
\beqn
\sqrt{n}\lt[J_{n}^{(i)}(\X,\hat{\theta}_{n}) - J^{(i)}(\theta_{0},\hat{\theta}_{n})\rt]&=&O_{P_{\theta_{0}}}(1).
\eeqn
\end{enumerate}
\eas

\bas\label{as4}
There exists $m\ge 0,$ for which following hold:
\begin{enumerate}[(a)]
\item \textbf{Integrability Condition:} For $i=0,\ldots,m$ and any $\delta>0,$ one has for all $\theta\in\Theta$
$$E_{\theta}\lt[n\lt[J_{n}^{(i)}(\X,\hat{\theta}_{n}) - J^{(i)}(\theta_{},\hat{\theta}_{n})\rt]^{2}\rt]=O(1)$$ 
where the finite constant may depend on $\theta.$
\item For $i=0,\ldots,m$ there exist continuous functions $a_{i}(.)$ such that
\beqn
a_{i}(\theta_{}):=\lim_{n\to \infty}E_{\theta_{}}\sqrt{n}\lt[J_{n}^{(i)}(\X,\hat{\theta}_{n}) - J^{(i)}(\theta_{},\hat{\theta}_{n})\rt].
\eeqn
\item  For $i=0,\ldots,m$ the functions $J^{(i)}(\theta_{0},\cdot)$ are locally Lipschitz.
\end{enumerate}
\eas

\subsection{Remarks}
Here we state several remarks discussing the assumptions above.

\begin{remark} 
\begin{enumerate}[(a)]
\item Assumption \ref{ext}(b) can be verified with $s=0$ for the simple Jacobian structure of the form in (\ref{examplejac}) by taking $J_{i}(\mb x,\theta):=\bigg|w(x_{i}) \frac{\frac{\partial F_{i}(x_{i},\theta)}{\partial \theta}}{f_{}(x_{i},\theta)}\bigg|.$ Since 
\beqn
\int_{\R}J_{i}(\X,\theta)f(X_i,\theta)d\theta= \big|w(X_i)\big| \lt[ F_{i}(X_i,\infty)-F_{i}(X_i, -\infty)\rt]<\infty \s\s \text{a.s}\s  P_{\theta_{0}}. 
\eeqn
Note that any polynomial exponent of $n$ can replace the condition``$n^{s}$ for some $s>0$."
\item If $X_{1},X_{2},\ldots,X_{n}$ are iid realizations from density $f(\cdot|\theta_0)$ then both the numerator and denominator of the left hand side of (\ref{ex}) converge to $-\infty$ with rate $-C_1 \log n$ and $-C_2 n$ respectively. So in that case Assumption \ref{ext}(c) is strongly implied by $c=0$  if $\frac{C_1}{C_2}$ is uniformly bounded for $n\ge 1.$ 
\end{enumerate}
\end{remark}

\begin{remark}
Following comments are on Assumption \ref{ext2}:
\begin{enumerate}[(a)]
\item In (\ref{wald}) a difference from Bayesian paradigm is the extra (Assumption \ref{ext2} for $i=(m+1)$th order) smoothness condition for data dependent $J_{n}(\X,\theta)$ which is needed to apply the uniform law of large number in a neighborhood of $\theta_{0}$.
\item It follows from Wald's theorem that Assumption \ref{ext2}(a) holds for the simple Jacobian in (\ref{examplejac}) if following are satisfied for each $i=0,\ldots,m+1:$
\begin{enumerate}[(1)]
\item For each $x$,  $J^{(i)}(x,\cdot)$ is continuous in $\theta\in[\theta_0 -\delta,\theta_0 +\delta]$.
\item For each $\theta\in[\theta_0 -\delta,\theta_0 +\delta]$, $J^{(i)}(\cdot,\theta)$ is a strictly positive measurable function of $x$.
\item There exists a $\delta>0$ such that,  $$E_{\theta_{0}}\left(\sup_{\theta \in B(\theta_0,\delta)}\bigg|J^{(i)}(X_1,\theta)\bigg|\right)< \infty.$$
\end{enumerate}
\end{enumerate}
\end{remark} 

\begin{remark}
Following are some remarks on Assumption \ref{as4}:
\begin{enumerate}[(a)]
\item  Assumption \ref{as4}(a) is stronger than Assumption \ref{ext2}(c). We mentioned the latter to emphasis on the fact that it is sufficient for only first order term.
\item Note that in all situations where $\X$ is a collection of $n$ random samples, usually we have $a_{1}(\theta_{0}) - a_{2}(\theta_{0})\frac{\partial}{\partial\theta}\log J(\theta_0,\theta)=0$ because of arguments similar to contiguity. In that case $\Delta_1(\mathcal{G}) $ and the second term of $\Delta_2(\mathcal{G})$ are both first and second order terms for the asymptotic expansion of $P_{\theta_{0}}\lt[\theta_{0}\le \theta^{1-\alpha}(\pi,\X)\rt]$ where $\theta^{1-\alpha}(\pi,\X)$ is the $(1-\alpha)$th Posterior quantile based on the prior :$$\pi(\cdot) \propto J(\theta_{0},\cdot) \s\s\text{where $\theta_0$ is the true parameter value}.$$  
\end{enumerate}
\end{remark}

\begin{remark}[Higher order expansions] In general we have $J(\theta_{0},\theta)$ to be $\lim_{n\to \infty}E_{\theta_{0}}J_{n}(\X,\theta)$, implying $a_i(\theta_0)$ will be $0$ for $i=0,1$. But  in Theorem \ref{uni} we kept it general since data generating structural equation is not-unique. So conditions for the first two order terms really will not differ from the conditions in \textbf{probability matching priors}. We will remark about higher (third) order term and sketch the . Along with Assumptions \ref{str}-\ref{as4} suppose further  following two assumptions hold with $m=3$:
\begin{enumerate}[(1)]
\item  For $i=0,\ldots,m$ and any $\delta>0,$ one has for all $\theta\in\Theta$
$$\lim_{n\to \infty}n^{\frac{3}{2}}E_{\theta}\lt[\lt[J_{n}^{(i)}(\X,\hat{\theta}_{n}) - J^{(i)}(\theta_{},\hat{\theta}_{n})\rt]^{3}\rt]<\infty$$ 
where the finite constants may depend on $\theta.$
\item Define the following quantities given they exist: \beqn
a_{i}^{(1)}(\theta_{0})&:=&\lim_{n\to \infty}nE_{\theta_{0}}\lt[J_{n}^{(i)}(\X,\hat{\theta}_{n}) - J^{(i)}(\theta_{0},\hat{\theta}_{n})\rt]^{2},\non\\a_{0,1}^{()}(\theta_{0})&:=&\lim_{n\to \infty}nE_{\theta_{0}}\lt[J_{n}^{}(\X,\hat{\theta}_{n}) - J^{}(\theta_{0},\hat{\theta}_{n})\rt]\lt[J_{n}^{(1)}(\X,\hat{\theta}_{n}) - J^{(1)}(\theta_{0},\hat{\theta}_{n})\rt].\non
\eeqn
\end{enumerate}
Then analogue to the Theorem \ref{uni} a \textbf{third order representation} holds:
\beqn
P_{\theta_{0}}\bigg[\theta_{0}\le \theta^{1-\alpha}(\mathcal{G},\X,n)\bigg]- \big(1-\alpha\big) &=& 
\frac{c_{1}\Delta_1(\mathcal{G})}{\sqrt{n}}+\frac{c_{2}\Delta_2(\mathcal{G})}{n}+\frac{c_{3}\Delta_3(\mathcal{G})}{n^{3/2}}+o\bigg(\frac{1}{n^{3/2}}\bigg),\,\,\,\,\,\,\s\,\,\non\\
\text{where }\,\,\,\Delta_3(\mathcal{G}) =\bigg[\frac{a_{i}^{(1)}(\theta_{0})g_1}{g^{2}_{3}} -\frac{a_{0,1}^{()}(\theta_{0})}{g_2}\bigg]&+&I^{-\frac{1}{2}}_{\theta_{0}}\bigg[\frac{a_{1}(\theta_{0})}{z } - \frac{a_{2}(\theta_{0})g_1}{z g_{2}}\bigg]+\bigg\{\text{Third order }\non\\ \text{Probability matching prior term with prior}& J(\theta_{0},\theta)&\text{ at $\theta=\theta_{0}$}\bigg\}.\,\,\,\,\non
\eeqn
Define 
\[\W^{(m)}_{n}(\X) := \sqrt{n}\lt(\frac{J^{(m)}_{n}(\X,\hat{\theta}_{n})}{J_{n}(\X,\hat{\theta}_{n})}-  \frac{J^{(m)}(\theta_0,\hat{\theta}_{n})}{J(\theta_0,\hat{\theta}_{n})}\rt).\].

This extra additive quantity $ \bigg(\frac{a_{i}^{(1)}(\theta_{0})g_1}{g^{2}_{3}} -\frac{a_{0,1}^{()}(\theta_{0})}{g_2}\bigg)$ in the display of $\Delta_3(\mathcal{G}) $ will come due to the following Taylor's expansion of $\frac{T_1}{T_2}$ around $\frac{g_1}{g_2}$ where $T_1:= J'_{n}(\X,\hat{\theta}_{n}),T_2:=J_{n}(\X,\hat{\theta}_{n})$ and their corresponding limits $g_1:= J'(\theta_{0},\theta_{0}),g_2:=J(\theta_{0},\theta_{0})$
\beqn
\frac{T_1}{T_2}=\frac{g_1}{g_2}&+&(T_1-g_1)\frac{1}{g_2}- (T_2-g_2)\frac{g_1}{g^{2}_{2}}+ \lt((T_2 - g_2)^{2}\frac{g^{}_{1}}{g_{2}^{3}}- (T_1-g_1)(T_2-g_2)\frac{1}{g_{2}}\rt)\label{remeq}\\&+&O\Bigg(\bigg((T_{1} -g_{1})\frac{\partial}{\partial x_1}+(T_{2} -g_{2})\frac{\partial}{\partial x_2}\bigg)^{3}\bigg(\frac{x_{1}}{x_{2}}\bigg)\Bigg|_{x_1\in(T_{1},g_1),x_{2}\in (T_{2},g_2)}\Bigg)\s\s\text{implying}\non\\
E_{\theta_{0}}\lt[W_{n}^{(1)}(\X)\rt]&=& \bigg(\frac{a_{1}(\theta_{0})}{g_{2}}- \frac{a_{0}(\theta_{0})g_{1}}{g^{2}_{2}}\bigg)+ \frac{1}{\sqrt{n}}\bigg[\frac{a_{i}^{(1)}(\theta_{0})g_1}{g^{2}_{3}} -\frac{a_{0,1}^{()}(\theta_{0})}{g_2}\bigg]+O\bigg(\frac{1}{n}\bigg)\label{ett1}.
\eeqn
 keeping an extra order term. Note that for simple data generating equation $J(\theta_{0},\theta)= E_{\theta_{0}}J_{n}(\X,\theta),$ along with the empirical structure $J_{n}(\X,\theta) = \frac{1}{n} \sum_{i=1}^{n}J(X_{i},\theta)$ then one gets $$\bigg[\frac{a_{i}^{(1)}(\theta_{0})g_1}{g^{2}_{3}} -\frac{a_{0,1}^{()}(\theta_{0})}{g_2}\bigg]=\bigg[\frac{Var_{\theta_{0}}(J(X_{1},\theta))g_1}{g^{2}_{3}} -\frac{Cov_{\theta_{0}}(J(X_{1},\theta),J'(X_{1},\theta))}{g_2}\bigg]\bigg|_{\theta=\theta_{0}}$$
which appears as an extra in the third order term. The difference of Fiducial cases will be different from Bayesian paradigm likewise in the further order of terms, starting from $3$rd order due to the presence of $\{W_{n}^{(m)}(\X), m\ge 1\}$ and their respective higher order expansions. 
\end{remark}

%\bt\label{uni}
%Suppose Assumptions  \ref{str},\ref{ext},\ref{ext2},\ref{as4} hold with $m=2.$ Following expansion holds for all $\theta_{0}\in\Theta$ and for some constants $c_{1}, c_{2}$ (depends only on $\alpha$)
%\beqn
%\s P_{{\theta_{0}}}\bigg[\theta_{0}\le \theta^{1-\alpha}(\mathcal{G},\X,n)\bigg]&=& \big(1-\alpha\big) + 
%\frac{c_{1}\Delta_1(\mathcal{G})}{\sqrt{n}}+\frac{c_{2}\Delta_2(\mathcal{G})}{n}+o\bigg(\frac{1}{n}\bigg),\s\s\text{where}\non
%\eeqn
%\beqn
%\Delta_1 (\mathcal{G})&:=& \bigg[I_{\theta_{0}}^{-\frac{1}{2}} \frac{\frac{\partial}{\partial \theta}J(\theta_0,\theta)}{J(\theta_{0},\theta_{0})} + \frac{\partial I^{-\frac{1}{2}}_{\theta}}{\partial \theta}\bigg]\Bigg|_{\theta=\theta_{0}}, \label{et2}\s\s\s\\
%\Delta_2(\mathcal{G}) &:=& \frac{I^{-\frac{1}{2}}_{\theta_{0}}}{z_{\alpha}g_2}\bigg[a_{1}(\theta_{0}) - \frac{a_{0}(\theta_{0})g_1}{ g^{}_{2}}\bigg]+J(\theta_0,\theta_0)^{-1}\bigg[\frac{1}{6}\frac{\partial}{\partial \theta}\lt\{I^{-2}_{\theta}J(\theta_0,\theta)E_{\theta}\lt[l^{(3)}(\theta \mid X)\rt]\rt\}\non\\&&- \frac{1}{2}\frac{\partial^2}{\partial \theta^2}J(\theta_0,\theta)I^{-1}_{\theta}\bigg]\Bigg|_{\theta=\theta_0}.\,\label{secondord}
%\eeqn
%\et

\section{Proofs}\label{s:proofs}

We will proceed through a number of steps. Below we will be using notion introduced in previous section. First we prove a lemma on the expansion of the fiducial density and then we will give an asymptotic expansion of the Fiducial quantile in Corollary \ref{Cor1}. After that in order to get the frequentist coverage of the quantile with the obtained expression from Corollary \ref{Cor1}, we will proceed with Shrinkage method. 

The main challenges arise from the fact that there are fiducial specific terms in the expansion of the Jacobian function $J_n(\X,\theta)$. In particular, for some $\theta'\in (\hat{\theta}_{n},\hat{\theta}_{n}+\frac{y}{\sqrt{nc}}),$ 
\beqn
J_{n}(\X,\hat{\theta}_{n} +\frac{y}{\sqrt{nc}})&=& J_{n}(\X,\hat{\theta}_{n})+J'_{n}(\X,\hat{\theta}_{n})\frac{y}{\sqrt{nc}}+J''_{n}(\X,\hat{\theta}_{n})\frac{y^2}{2nc}+J_{n}'''(\X,\theta')\frac{y^3}{6(nc)^{3/2}}\s\s\non\\
&=&J_{n}(\X,\hat{\theta}_{n})\lt[1+\frac{J'_{n}(\X,\hat{\theta}_{n})}{J_{n}(\X,\hat{\theta}_{n})}\frac{y}{\sqrt{nc}}+\frac{J''_{n}(\X,\hat{\theta}_{n})}{J_{n}(\X,\hat{\theta}_{n})}\frac{y^2}{2nc}+\frac{J_{n}'''(\X,\theta')}{J_{n}(\X,\hat{\theta}_{n})}\frac{y^3}{6(nc)^{3/2}}\rt]\non\\
&=&J_{n}(\X,\hat{\theta}_{n}) \lt[1+\frac{J'(\theta_0,\hat{\theta}_{n})}{J(\theta_0,\hat{\theta}_{n})}\frac{y}{\sqrt{nc}}+\frac{1}{n}\lt(\W^{(1)}_{n}(\X)\frac{y}{\sqrt{c}} +   \frac{J''(\theta_0,\hat{\theta}_{n})}{J(\theta_0,\hat{\theta}_{n})}\frac{y^2}{2c}\rt)\rt]\non\\&&+\frac{J_{n}(\X,\hat{\theta}_{n}) }{n^{3/2}}\lt(\W^{(2)}_{n}(\X)\frac{y^2}{2c} +\frac{J_{n}'''(\X,\theta')}{J_{n}(\X,\hat{\theta}_{n})}\frac{y^3}{6(c)^{3/2}}\rt),\label{jaco}
\eeqn
where
\[\W^{(m)}_{n}(\X) := \sqrt{n}\lt(\frac{J^{(m)}_{n}(\X,\hat{\theta}_{n})}{J_{n}(\X,\hat{\theta}_{n})}-  \frac{J^{(m)}(\theta_0,\hat{\theta}_{n})}{J(\theta_0,\hat{\theta}_{n})}\rt).\]

 In order to lessen notational burden we also denote
\[K(\theta_0,\X,y):=\lt[1+\frac{J'(\theta_0,\hat{\theta}_{n})}{J(\theta_0,\hat{\theta}_{n})}\frac{y}{\sqrt{nc}}+\frac{1}{n}\lt(\W^{(1)}_{n}(\X)\frac{y}{\sqrt{c}} +   \frac{J''(\theta_0,\hat{\theta}_{n})}{J(\theta_0,\hat{\theta}_{n})}\frac{y^2}{2c}\rt)\rt].\]

Under the Assumption \ref{str} with $m=2$ one has the following expansion ( Consequence of Taylor's theorem) holds for some $\theta'\in (\hat{\theta}_{n},\hat{\theta}_{n}+\frac{y}{\sqrt{nc}}):$
\beqn
n[L_{n}(\hat{\theta}_{n} +\frac{y}{\sqrt{nc}}) - L_{n}(\hat{\theta}_{n})] &=& - \frac{y^2}{2}+ \frac{1}{6}\frac{y^3 L_{n}^{(3)} (\hat{\theta}_{n})}{\sqrt{n}c^{\frac{3}{2}}}+\frac{1}{24}\frac{y^4}{nc^2}L_{n}^{(4)}(\hat{\theta}_{n})+\frac{1}{120}\frac{y^5}{n^{3/2}c^2}L_{n}^{(5)}(\theta')\nonumber\\
&:=&- \frac{y^2}{2}+ R_{n}(\hat{\theta}_{n})+\frac{1}{120}\frac{y^5}{n^{3/2}c^2}L_{n}^{(5)}(\theta'),\label{likexp}
\eeqn
where $R_{n}(\theta):=\frac{1}{6}\frac{y^3 L_{n}^{(3)} (\theta)}{\sqrt{n}c^{\frac{3}{2}}}+\frac{1}{24}\frac{y^4}{nc^2}L_{n}^{(4)}(\theta).$

\begin{Lemma}\label{lem1}Suppose Assumptions  \ref{str},\ref{ext},\ref{ext2} hold with $m=2.$ Following quantity is 
\beqn
 I_{\R}&:=&n\int_{\R}\Bigg|J_{n}(\mathbf{X},\hat{\theta}_{n} +\frac{y}{\sqrt{nc}})e^{n[L_{n}(\hat{\theta}_{n} +\frac{y}{\sqrt{nc}})-L_{n}(\hat{\theta}_{n} )] }  \non\\ &&-J_{n}(\mathbf{X},\hat{\theta}_{n})e^{-\frac{y^2}{2}}\lt(1+R_{n}(\hat{\theta}_{n})+\frac{R_{n}(\hat{\theta}_{n})^2}{2}\rt) K(\theta_0,\X,y)\Bigg|dy = o_{P_{\theta_0}}(1).
\eeqn
\end{Lemma}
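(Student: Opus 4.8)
The plan is to substitute $\theta=\hat\theta_n+y/\sqrt{nc}$ and to expand the two factors of the integrand using the two expansions already in hand: the Jacobian expansion \eqref{jaco} and the log-likelihood expansion \eqref{likexp}. Writing the exponential factor as $e^{n[L_n(\hat\theta_n+y/\sqrt{nc})-L_n(\hat\theta_n)]}=e^{-y^2/2}\exp\{R_n(\hat\theta_n)+\epsilon_n\}$ with $\epsilon_n:=\tfrac{1}{120}\tfrac{y^5}{n^{3/2}c^2}L_n^{(5)}(\theta')=O_{P_{\theta_0}}(n^{-3/2})$, and using \eqref{jaco} to replace $J_n(\X,\hat\theta_n+y/\sqrt{nc})$ by $J_n(\X,\hat\theta_n)K(\theta_0,\X,y)$ plus an $n^{-3/2}$ remainder carrying $\W^{(2)}_n(\X)$ and the intermediate-point derivative $J_n'''(\X,\theta')$, the difference inside $I_{\R}$ factors as $J_n(\X,\hat\theta_n)e^{-y^2/2}$ times $[K(e^{R_n+\epsilon_n}-1-R_n-R_n^2/2)+n^{-3/2}(\text{Jacobian remainder})e^{R_n+\epsilon_n}]$. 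The argument is then to show that, after scaling by $n$, this $L^1$ difference is $o_{P_{\theta_0}}(1)$, which I would do by splitting $\R$ into a central region, a moderate-deviation region, and a far tail.

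On the central region $C_n:=\{|y|\le\kappa\sqrt{\log n}\}$ (which for large $n$ lies inside $B(\theta_0,\delta)$) one has $R_n,\epsilon_n\to0$ uniformly, so $e^{R_n+\epsilon_n}-1-R_n-R_n^2/2$ is controlled by $|\epsilon_n|+\tfrac{1}{6}|R_n+\epsilon_n|^3 e^{|R_n+\epsilon_n|}=O_{P_{\theta_0}}(n^{-3/2})$ times a polynomial in $y$. Together with the $n^{-3/2}$ Jacobian remainder, whose $\W^{(2)}_n(\X)=O_{P_{\theta_0}}(1)$ by Assumptions~\ref{ext2}(b),(c) and whose $J_n'''(\X,\theta')$ is controlled uniformly by the $i=m+1=3$ case of Assumption~\ref{ext2}(a), each piece is bounded pointwise by $n^{-3/2}$ times $e^{-y^2/2}\,\mathrm{poly}(y)$. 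Integrating against the Gaussian weight and multiplying by $n$ yields $O_{P_{\theta_0}}(n^{-1/2})$.

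For the tails I would bound the two terms of $I_{\R}$ separately. The approximant $J_n(\X,\hat\theta_n)e^{-y^2/2}(1+R_n+R_n^2/2)K$ carries an explicit Gaussian factor, so its integral over $|y|>\kappa\sqrt{\log n}$ is a polynomial times $n^{-\kappa^2/2}$, hence $o(n^{-1})$ once $\kappa$ is large. For the genuine integrand $J_n e^{n[L_n-L_n(\hat\theta_n)]}$ I would use the local quadratic bound $n[L_n(\hat\theta_n+y/\sqrt{nc})-L_n(\hat\theta_n)]\le-y^2/4$ on $B(\theta_0,\delta)$ (from $c=L_n^{(2)}(\hat\theta_n)\to I_{\theta_0}>0$ and the domination in Assumption~\ref{str}) to get Gaussian decay on the moderate region $\kappa\sqrt{\log n}<|y|$, $\theta\in B(\theta_0,\delta)$, where $J_n$ is uniformly $O_{P_{\theta_0}}(1)$ by Assumption~\ref{ext2}(a); while on the far region $\theta\in B(\theta_0,\delta)^c$ Assumption~\ref{ext}(a) gives $e^{n[L_n-L_n(\hat\theta_n)]}\le e^{-n\epsilon}$ and Assumptions~\ref{ext}(b),(c) bound the Jacobian integral by a polynomial $n^s$, so that region contributes $n\cdot n^s e^{-n\epsilon}=o(1)$.

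The main obstacle is the central-region bookkeeping around the fiducial-specific fluctuation terms: unlike the Bayesian matching arguments where the weight is a fixed prior, here $J_n(\X,\theta)$ is a data-dependent sum whose derivatives must be expanded, and the appearance of $\W^{(1)}_n(\X)$ inside $K$ together with $\W^{(2)}_n(\X)$ and the intermediate-point derivative $J_n'''(\X,\theta')$ in the remainder is what forces the uniform convergence of $J_n^{(i)}$ up to order $m+1$ and the $\sqrt{n}$-tightness of the fluctuations. Matching precisely $1+R_n+R_n^2/2$ rather than the full $e^{R_n}$, and verifying that the leftover $\epsilon_n$, cross, and cubic terms are genuinely $O_{P_{\theta_0}}(n^{-3/2})$ so that they survive multiplication by $n$ as $o_{P_{\theta_0}}(1)$, is where the careful accounting is needed.
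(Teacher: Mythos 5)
Your proposal follows essentially the same route as the paper's proof: the same three-region split (central, moderate-deviation, far tail), the same Taylor control of $e^{R_n+\epsilon_n}-1-R_n-R_n^2/2$ and of the Jacobian remainder $\W^{(2)}_n(\X)$, $J_n'''(\X,\theta')$ in the central region, the same high-probability quadratic bound $-y^2/4$ in the moderate region, and the same use of Assumptions~\ref{ext}(a)--(c) to pair the Jacobian with a retained density factor and kill the far tail by $n^{s}e^{-n\epsilon}$. The only differences are cosmetic (cut-off $\kappa\sqrt{\log n}$ versus the paper's $C\log\sqrt{n}$, and bounding the exponential remainder by $\tfrac16|x|^3e^{|x|}$ instead of $\tfrac{x^3}{6(1-x/4)}$), so the argument is correct as proposed.
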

%\begin{remark}
%For the consistency of fiducial distribution, we have seen large sample approximation of $J_n(\X,\theta)$ to $J(\theta_{0},\theta)$ uniformly in a %neighborhood of $\theta \in [\theta_{0}-\delta,\theta_{0}+\delta]$, then getting a result through the Bernstein Von-Misses theorem on $J(\theta_0%\theta).$ Carrying out a similar technique for first order analysis will not work since the term $\sqrt{n}(J_n(\X,\theta) - J(\theta_0,\theta))$ won't be $o_{p}(1)$ anymore.
%\end{remark}
\begin{proof}
We will proceed traditionally by breaking the integral in three disjoint regions. Denoting $I_\R$ as the integral appeared in the left hand side of the Lemma,  we have 
\beqn\label{dec}
I_\R\le I_{A_1}+I_{A_2}+I_{A_3}
\eeqn
where $A_1=\{y:|y|<C \log\sqrt{n}\}, A_2 = \{y:C \log\sqrt{n}\le|y|\le \delta \sqrt{n}\}, A_3=\{y:|y|>\delta \sqrt{n}\}.$ The choice of $C,\delta$ will be specified later. The third term of (\ref{dec}) can be written as  $I_{A_3}\leq  I_{A_3}^{1}+ I_{A_3}^{2}$ where
\beqn
I_{A_3}^{1} &=& n\int_{A_3}J_{n}(\mathbf{X},\hat{\theta}_{n} +\frac{y}{\sqrt{nc}})e^{n[L_{n}(\hat{\theta}_{n} +\frac{y}{\sqrt{nc}})-L_{n}(\hat{\theta}_{n})]}dy,\non\\ I_{A_3}^{2}&=& n\int_{A_3}J_{n}(\mathbf{X},\hat{\theta}_{n})e^{-\frac{y^2}{2}}\lt(1+R_{n}(\hat{\theta}_{n})+\frac{R_{n}(\hat{\theta}_{n})^2}{2}\rt) K(\theta_0,\X,y)dy. \nonumber
\eeqn
Expanding  $I^{1}_{A_3},$ one gets
\beqn
 I^{1}_{A_3}&=&n\sum_{i=1}^{n}\int_{A_3}J_{i}(\X,\hat{\theta}_{n} +\frac{y}{\sqrt{nc}})f(X_i,\hat{\theta}_n+\frac{y}{\sqrt{nc}})e^{n[L_{n}(\hat{\theta}_{n} +\frac{y}{\sqrt{nc}})-L_{n}(\hat{\theta}_{n})]-\log f(X_{i},\hat{\theta}_n+\frac{y}{\sqrt{nc}})}dy\nonumber\\
&=&n\sum_{i=1}^{n}\int_{A_3}J_i(\X,\hat{\theta}_{n} +\frac{y}{\sqrt{nc}})f(X_i,\hat{\theta}_n+\frac{y}{\sqrt{nc}})e^{n[L_{n}(\hat{\theta}_{n} +\frac{y}{\sqrt{nc}})-L_{n}(\hat{\theta}_{n})]\Big[1-\frac{\log f(X_{i},\hat{\theta}_n+\frac{y}{\sqrt{nc}})}{n[L_{n}(\hat{\theta}_{n} +\frac{y}{\sqrt{nc}})-L_{n}(\hat{\theta}_{n})]}  \Big]}dy\non
\eeqn
\beqn
\le n^{2}\sup_{i=1,\ldots,n}\int_{A_3}J_i(\mb X,\hat{\theta}_{n} +\frac{y}{\sqrt{nc}})f(X_i,\hat{\theta}_n+\frac{y}{\sqrt{nc}})e^{n[L_{n}(\hat{\theta}_{n} +\frac{y}{\sqrt{nc}})-L_{n}(\hat{\theta}_{n})]\Big[1-\frac{\log f(X_{i},\hat{\theta}_n+\frac{y}{\sqrt{nc}})}{n[L_{n}(\hat{\theta}_{n} +\frac{y}{\sqrt{nc}})-L_{n}(\hat{\theta}_{n})]}  \Big]}dy\nonumber
\eeqn 
Notice that from Assumption \ref{ext}(b) $P_{\theta_{0}}$ almost surely $n^{-s}\int_{\R}J_i(\mb X,\hat{\theta}_{n} +\frac{y}{\sqrt{nc}})f(X_i,\hat{\theta}_n+\frac{y}{\sqrt{nc}})dy<\infty$. Now by Assumption \ref{ext} one has the exponential term to decay as $e^{-n(1-c)\epsilon}$ in probability and that term multiplied with $n$ will also goes to $0$ in probability. Rest will follow by dominated convergence theorem. For $I_{A_3}^{2}$ we have $J_{n}(\X,\hat{\theta}_{n}) \to^{P_{\theta_{0}}} J(\theta_0,\theta_0)$  from Assumption \ref{ext2}. The multiplicative parts are  the integrals $\int_{A_3}y^{\alpha} e^{-y^2} dy$ for $\alpha =0,1,2$ which under $A_3$ decays exponentially to $0$ resulting the $P_{\theta_0}$ limit of the second term $0$.

Now consider $I_{A_1}$. Denote $\frac{1}{120}\frac{y^5}{n^{3/2}c^2}L_{n}^{(5)}(\theta')$  by $M_{n}$. The first integral in region $A_1,$ can be bounded by $I_{A_1}\leq I^1_{A_1}+I^2_{A_1},$ 
where 
\beqn
 I^1_{A_1}&:=&n\int_{A_1}J_{n}(\mathbf{X},\hat{\theta}_{n} +\frac{y}{\sqrt{nc}})e^{-\frac{y^2}{2}}\Big|e^{R_n(\hat{\theta}_{n})+M_n}-1-R_{n}(\hat{\theta}_{n})-\frac{R_{n}(\hat{\theta}_{n})^2}{2}\Big|dy\s\s\s\text{and}\non\\ I^2_{A_1}&:=&n\int_{A_1} I(\X,n,y)e^{-\frac{y^2}{2}}\lt(1+R_{n}(\hat{\theta}_{n})+\frac{R_{n}(\hat{\theta}_{n})^2}{2}\rt)dy\non\s\s\s\label{sec}
\eeqn
and  $I(\X,n,y):=\big|J_{n}(\mathbf{X},\hat{\theta}_{n} +\frac{y}{\sqrt{nc}})- J_{n}(\X,\hat{\theta}_{n}) K(\theta_0,\X,y)\big|.$ Note that under $A_1,$ the quantity $$n \lt(M_n+\frac{M^2_{n}}{2}+R_{n}(\hat{\theta}_n)M_n\rt)=O_p(\frac{(\log\sqrt{n})^5}{\sqrt{n}}).$$  Also $L_{n}^{(5)}(\theta')$ is $O_p(1)$ for $\theta' \in (\theta_0-\delta,\theta_0+\delta)$ along with $L^{(3)}_{n}(\hat{\theta}_{n})$ and $L^{(4)}_{n}(\hat{\theta}_{n})$. Since $R_n+M_n$ is $O_p(\frac{\log^{3}(\sqrt{n})}{\sqrt{n}})$. Using the inequality $$ e^x-1-x-\frac{x^2}{2}\le \frac{x^3}{6(1-\frac{x}{4})}\s\text{for}\s x\in(0,4)$$ the first term of (\ref{sec}) can be written as
\beqn
I^1_{A_{1}}&\leq& n\int_{A_1}J_{n}(\mathbf{X},\hat{\theta}_{n} +\frac{y}{\sqrt{nc}})e^{-\frac{y^2}{2}}\Big|e^{R_n+M_n}-1-(R_{n}+M_n)-\frac{(R_{n}+M_n)^2}{2}\Big|dy\non\\&+&n \int_{A_1}J_{n}(\mathbf{X},\hat{\theta}_{n} +\frac{y}{\sqrt{nc}})e^{-\frac{y^2}{2}}\lt(M_n+\frac{M^2_{n}}{2}+R_{n}(\hat{\theta}_n)M_n\rt) dy\nonumber\\
&\le& n\int_{A_1}J_{n}(\mathbf{X},\hat{\theta}_{n} +\frac{y}{\sqrt{nc}})e^{-\frac{y^2}{2}}\Big|\frac{(R_n+M_n)^3}{6(1-\frac{(R_n+M_n)}{4})}\Big|dy+O_p(\frac{(\log\sqrt{n})^5}{\sqrt{n}})\sup_{y\in A_1}J_{n}(\mathbf{X},\hat{\theta}_n+ \frac{y}{\sqrt{nc}}).\nonumber
\eeqn
Now $\sup_{y\in A_1}n(R_n+M_n)^3\le \sup_{y\in A_1}\frac{y^9}{\sqrt{n}}\max{[\frac{L^{(5)}_n(\theta')}{c^{\frac{3}{2}}},\frac{L^{(3)}_n(\hat{\theta}_n)}{c^2}]}\le \frac{\log(\sqrt{n})^9}{\sqrt{n}} O_{p}(1).$
So  we have 
\beqn
I^{1}_{A_1}\le \sup_{y \in A_{1}}J_{n}(\mathbf{X},\hat{\theta}_n+ \frac{y}{\sqrt{nc}})\Big[O_p\left(\frac{\log(\sqrt{n})^9}{\sqrt{n}}\right)\int_{A_1}e^{-\frac{y^2}{2}}dy+O_p(\frac{(\log\sqrt{n})^5}{\sqrt{n}})\Big].
\eeqn

Since $\hat{\theta}_{n}\to \theta_{0}\s a.s,$ under $A_{1},$ we have   $(\hat{\theta}_{n},\hat{\theta}_{n}+\frac{y}{\sqrt{nc}})\subset (\theta_{0}-\delta,\theta_{0}+\delta)$ with  $P_{\theta_0}$ probability $1.$ We have almost surely $ \sup_{y \in A_{1}}J_{n}(\mathbf{X},\hat{\theta}_n+ \frac{y}{\sqrt{nc}})\le \sup_{\theta'\in (\theta_{0}-\delta,\theta_{0}+\delta)}J_{n}(\mathbf{X},\theta')$. From Assumption \ref{ext2} one has almost surely
$$I^{1}_{A_1} \le \sup_{\theta'\in (\theta_{0}-\delta,\theta_{0}+\delta)}J_{n}(\mathbf{X},\theta')\Big[O_p\lt (\frac{\log(\sqrt{n})^9}{\sqrt{n}}\rt )\int_{A_1}e^{-\frac{y^2}{2}}dy+O_p(\frac{(\log\sqrt{n})^5}{\sqrt{n}})\Big]. $$  
Now using Wald's theorem one has $\sup_{\theta'\in (\theta_{0}-\delta,\theta_{0}+\delta)}|J_{n}(\mathbf{X},\theta')- J(\theta_0,\theta')|\to^{a.s}0,$ resulting the following statement almost surely
$$I^{1}_{A_{1}}\le  \sup_{\theta'\in (\theta_{0}-\delta,\theta_{0}+\delta)}J(\theta_0,\theta')\Big[O_p\lt (\frac{\log(\sqrt{n})^9}{\sqrt{n}}\rt )\int_{A_1}e^{-\frac{y^2}{2}}dy+O_p(\frac{(\log\sqrt{n})^5}{\sqrt{n}})\Big].$$
which is $o_{p_{\theta_{0}}}(1).$
It follows that the second term of $I_{A_1}$ 
\beqn
 I_{A_1}^2 &:=&n\int_{A_1}I(\X,n,y)e^{-\frac{y^2}{2}}(1+R_{n})dy\non\\
&\le& \sup_{\theta' \in (\hat{\theta}_{n},\hat{\theta}_{n}+\frac{y}{\sqrt{nc}})}J_{n}(\X,\hat{\theta}_{n})\int_{A_1}\frac{1 }{n^{1/2}}\lt(\W^{(2)}_{n}(\X)\frac{y^2}{2c} +\frac{J_{n}'''(\X,\theta')}{J_{n}(\X,\hat{\theta}_{n})}\frac{y^3}{6(c)^{3/2}}\rt)e^{-\frac{y^2}{2}}(1+R_{n}(\hat{\theta}_{n}))dy.\nonumber
\eeqn

Again similarly using almost sure convergence of the event $(\hat{\theta}_{n},\hat{\theta}_{n}+\frac{y}{\sqrt{nc}})\subset (\theta_{0}-\delta,\theta_{0}+\delta)$ and Assumption \ref{ext2} on $J'''_{n}(\X,\theta')$ we get $I^{2}_{A_1}$ is of $o_{P_{\theta_0}}(1).$

 Next consider the integral $I_{A_2}$, that can bounded above by $I_{A_2}\le I_{A_2}^{1}+ I_{A_2}^{2}$ where
\beqn\label{second}
 I_{A_2}^{1}&:=&n\int_{A_2}J_{n}(\mathbf{X},\hat{\theta}_{n} +\frac{y}{\sqrt{nc}})e^{-\frac{y^2}{2}+R_n+M_n}dy,\non\\  I_{A_2}^{2}&:=& n\int_{A_2}J_{n}(\mathbf{X},\hat{\theta}_{n})e^{-\frac{y^2}{2}}\lt(1+R_{n}(\hat{\theta}_{n})+\frac{R_{n}(\hat{\theta}_{n})^2}{2}\rt) K(\theta_0,\X,y)dy. \nonumber\\
\nonumber
\eeqn
Consider the term $I_{A_2}^{2}$. Note that under $A_2$ for $n\ge e^4,$ $(\log\sqrt{n})^2>\log n$ and also using the fact $\frac{|y|}{\sqrt{n}}<\delta$ the quantity 
\beqn
R_n(\hat{\theta}_n) &\le& \frac{1}{6}\delta^{3} n \frac{L_{n}^{(3)}(\hat{\theta}_n)}{c^{3/2}}+\frac{1}{24}\delta^{4} n\frac{L^{(4)}_{n}(\hat{\theta}_n)}{c^2}=O_{P_{\theta_0}}(n)\nonumber
\eeqn
resulting $\lt(1+R_{n}(\hat{\theta}_{n})+\frac{R_{n}(\hat{\theta}_{n})^2}{2}\rt)$ is $O_{P_{\theta_0}}(n^2)$. Also from Assumption \ref{ext2} the remaining term $J_{n}(\mathbf{X},\hat{\theta}_{n})K(\theta_0,\X,y)$ is $O_{P_{\theta_0}}(1).$ So the upper bound of the second integral $I_{A_2}^{2}$ is bounded by
\beqn
 &&O_{P_{\theta_0}}(n^3)J_{n}(\mathbf{X},\hat{\theta}_{n})\bigg[1+\frac{J'(\theta_0,\hat{\theta}_{n})}{J(\theta_0,\hat{\theta}_{n})}\frac{\delta}{\sqrt{c}}\non\\&+&\frac{1}{n}\lt(\W^{(1)}_{n}(\X)\frac{\delta \sqrt{n}}{\sqrt{c}} +   \frac{J''(\theta_0,\hat{\theta}_{n})}{J(\theta_0,\hat{\theta}_{n})}\frac{n\delta^2}{2c}\rt)\bigg]. e^{-\frac{C^2}{2}\log n}\left[\delta\sqrt{n} - C\log\sqrt{n}\right]\nonumber\\
&=& O_{P_{\theta_0}}(n^{\frac{7}{2}-\frac{C^2}{2}})
\eeqn
which goes to $0$ in probability if we choose $C>\sqrt{7}.$ This result is due to convergence of $J_{n}(\mathbf{X},\hat{\theta}_{n}),J'_{n}(\mathbf{X},\hat{\theta}_{n})$
respectively to $J(\theta_0,\theta_0),J'(\theta_{0},\theta_0)$ which is validated from Assumption \ref{ext2}.
Now considering the first term of the integral (\ref{second}) we have $\frac{|y|}{\sqrt{n}}<\delta,$ We have under $A_2$ 
$$|R_n(\hat{\theta}_{n})|\le\frac{1}{6}\frac{\delta y^2 L_{n}^{(3)}(\hat{\theta}_{n})}{c^{\frac{3}{2}}}+\frac{1}{24}\delta^2 y^2\frac{L^{(4)}_{n}(\hat{\theta}_n)}{c^2},  \s \s  |M_n|=\frac{1}{120}\frac{y^5}{n^{3/2}c^2}L_{n}^{(5)}(\theta')\le\frac{1}{120}\frac{y^2 \delta^3}{c^2}L_{n}^{(5)}(\theta')$$
and since under $A_2$ the quantities $\sup_{\theta' \in (\hat{\theta}_{n},\hat{\theta}_{n}+\frac{y}{\sqrt{nc}})}L_{n}^{(4)}(\theta')$,$\frac{L^{(4)}_{n}(\hat{\theta}_n)}{c^2}$ and $\frac{L^{(3)}_{n}(\hat{\theta}_{n})}{c^{3/2}}$ are $O_{p}(1),$ given a small $\epsilon>0$ one can always choose a $\delta$ so that we can get
\beqn\label{choose}
P_{\theta_{0}}\big\{-\frac{y^2}{2}+R_n(\hat{\theta}_{n})+M_n < -\frac{y^2}{4}, \s \forall y\in A_2\big\} >1-\epsilon \s \text{for}\s n>n_{0}.
\eeqn
So with probability greater than $1-\epsilon$
\beqn
n\int_{A_2}J_{n}(\mathbf{X},\hat{\theta}_{n} +\frac{y}{\sqrt{nc}})e^{-\frac{y^2}{2}+R_n+M_n}dy&\le& \sup_{y\in A_2}J_{n}(\mathbf{X},\hat{\theta}_{n} +\frac{y}{\sqrt{nc}})\s n\int_{A_2} e^{-\frac{y^2}{4}} dy\nonumber\\
&\to^{a.s} & 0 \s\text{as} \s n\to \infty.
\eeqn
The last line follows from the fact under $A_2,\s (\hat{\theta}_{n},\hat{\theta}_{n}+\frac{y}{\sqrt{nc}})\subset (\theta_{0}-\delta,\theta_{0}+\delta)$ almost surely and then by applying Assumption \ref{ext2}, $\sup_{y\in A_2}J_{n}(\mathbf{X},\hat{\theta}_{n} +\frac{y}{\sqrt{nc}})\le \sup_{\theta \in (\theta_{0}-\delta,\theta_{0}+\delta)} J(\theta_0,\theta)$ asymptotically almost surely. The integral will converge to $0$ as $n\to \infty$ by choosing a bigger $ C.$ Choice of $\delta$ will be specified by (\ref{choose}) given a small $\epsilon>0.$
\end{proof}

Now it's obvious to conclude from Lemma \ref{lem1} that for any $A\in \mathcal{B}(\mathbb{R})$
\beqn
&&\int_{A} e^{-\frac{y^{2}}{2}}\lt(1+R_{n}(\hat{\theta}_{n})+\frac{R_{n}(\hat{\theta}_{n})^2}{2}\rt) K(\theta_0,\X,y)dy\label{co1}\\&=&\int_{A} e^{-\frac{y^{2}}{2}}\lt[1+\frac{1}{\sqrt{n}}\lt(A_{1}y+A_{3}y^3\rt)+\frac{1}{n}\lt(A_{2}y^{2}+A_{4}y^{4}+A_{6}y^{6}+W^{(1)}_{n}\frac{y}{\sqrt{c}}\rt)\rt]dy+o_{p_{{\theta_{0}}}}\lt(\frac{1}{n}\rt)\non
\eeqn
where
%Along the line of the proof of Lemma \ref{lem1}
%\begin{Corollary}
%\beqn
%n^{\frac{k}{2}}\int_{\R}\Big|J_{n}(\mathbf{X},\hat{\theta}_{n} +\frac{y}{\sqrt{nc}})e^{n[L_{n}(\hat{\theta}_{n} +\frac{y}{\sqrt{nc}})-L_{n}%(\hat{\theta}_{n} )] }-  J_{n}(\mathbf{X},\hat{\theta}_{n})e^{-\frac{y^2}{2}}(1+R_{n})[1+\frac{J_{n}'(\mathbf{X},\hat{\theta}_{n})}{J_{n}%(\mathbf{X},\hat{\theta}_{n})}.\frac{y}{\sqrt{nc}}]\Big|dy \xrightarrow{^{P_{\theta_{0}}}} 0.\nonumber
%\eeqn
%\end{Corollary}
$$ A_{1}:= c^{-\frac{1}{2}}\frac{J'(\theta_{0},\hat{\theta}_{n})}{J(\theta_{0},\hat{\theta}_{n})}, 
A_{2}:= \frac{1}{2}c^{-1}\frac{J''(\theta_{0},\hat{\theta}_{n})}{J(\theta_{0},\hat{\theta}_{n})}, A_{3}:= \frac{1}{6}c^{-\frac{3}{2}}a ,\s A_{4}:=A_{1}A_{3}+\frac{1}{24}c^{-2}a_4, A_{6}:=\frac{1}{2}A^{2}_{3}.$$
(\ref{co1}) follows from the fact that all higher order terms will accumulate in $o_{p_{\theta_{0}}}(\frac{1}{n}).$ For an illustration taking just one cross-product term of second term of $R_{n}(\hat{\theta}_{n})$ and $\frac{A_{1}y}{\sqrt{n}},$ one has
\beqn
\frac{1}{24}\frac{y^4}{nc^2}L_{n}^{(4)}(\hat{\theta}_{n})\frac{A_{1}y}{\sqrt{n}}&=&\frac{1}{24}\frac{A_{1}y^{5}}{n\sqrt{n}c^2}L_{n}^{(4)}(\hat{\theta}_{n})1_{\{|y|\le\log n\}}+\frac{1}{24}\frac{A_{1}y^{5}}{n\sqrt{n}c^2}L_{n}^{(4)}(\hat{\theta}_{n})1_{\{|y|>\log n\}}.\non
\eeqn
Since $R_{n}(\hat{\theta}_{n}).A_{1}$ is $O_{P_{\theta_{0}}}(1)$
\beqn
\int_{\R}e^{-\frac{y^2}{2}}R_{n}(\hat{\theta}_{n})\frac{A_{1}y}{\sqrt{n}}dy&=&\frac{L_{n}^{(4)}(\hat{\theta}_{n})A_{1}}{24c^{2}n^{\frac{3}{2}}}\int_{|y|\le\log n} y^{5}e^{-\frac{y^2}{2}}dy+\frac{L_{n}^{(4)}A_{1}}{24c^{2}n^{\frac{3}{2}}}\int_{|y|>\log n} y^{5}e^{-\frac{y^2}{2}}dy\non\\
&\le& O_{P_{\theta_{0}}}\bigg(\frac{(\log n)^5}{n^{\frac{3}{2}}}\bigg)+\frac{L_{n}^{(4)}A_{1}}{24c^{2}n^{\frac{3}{2}}}\int_{|y|>\log n} y^{5}e^{-\frac{y^2}{2}}dy\label{co2}
\eeqn
Since Gamma distribution is exponentially tailed, whole R.H.S of  (\ref{co2}) is of $o_{P_{\theta_{0}}}(\frac{1}{n})$. Now note that the formula for $r$-th (even) central moment of standard normal distribution $EX^{r}:= (r-1)(r-3)\ldots1$. Dividing the quantity $J_{n}(\mathbf{X},\hat{\theta}_{n} +\frac{y}{\sqrt{nc}})e^{n[L_{n}(\hat{\theta}_{n} +\frac{y}{\sqrt{nc}})-L_{n}(\hat{\theta}_{n} )] }$ with the expansion of the denominator $\int_{\R}J_{n}(\mathbf{X},\hat{\theta}_{n} +\frac{y}{\sqrt{nc}})e^{n[L_{n}(\hat{\theta}_{n} +\frac{y}{\sqrt{nc}})-L_{n}(\hat{\theta}_{n} )] }dy$, one has the asymptotic expansion of the fiducial density (upto second order in terms of expansion with respect to $\frac{1}{\sqrt{n}}$) of $y$
\beqn
f_{\mathcal{G}}(y\mid \X)&=&\phi(y)\bigg(1+\frac{1}{\sqrt{n}}\lt(A_{1}y+A_{3}y^3\rt)+\frac{1}{n}\bigg(A_{2}(y^{2}-1)+A_{4}(y^{4}-3)+A_{6}(y^{6}-15)\non\\&+&W_{n}^{(1)}(\X)\frac{y}{\sqrt{c}}\bigg)\bigg)+o_{p_{{\theta_{0}}}}\lt(\frac{1}{n}\rt)\label{expr1}
\eeqn

where $\phi(.)$ is the density function of the normal distribution. From (\ref{co1}) we get (\ref{expr1}) using the power series expansion $\frac{1}{1+x}=\sum_{i=1}^{\infty} (-1)^{i}x^{i}$ given $|x|<1$ on first two ordered terms.

\begin{remark}The conclusion (\ref{expr1}) will remain unchanged if $W_{n}^{(1)}(\X)$ is replaced by a random variable $\widehat{W}^{(1)}_{n}(\X)$ that is $\sigma(\X)$ measurable with the property
$$P_{\theta_0}\lt[W_{n}^{(1)}(\X) \neq \widehat{W}^{(1)}_{n}(\X)\rt] = e^{-cn}\s\s\text{for some } c>0.$$  It is because the quantity $\lt(W_{n}^{(1)}(\X) - \widehat{W}^{(1)}_{n}(\X)\rt)$ multiplied with any polynomial ordered term of $n$ will remain $O_{P_{\theta_0}}(e^{-cn})$ since  it doesn't hamper in any specific polynomial order terms.
\end{remark}

Recall the classical orthogonal \textbf{Hermite polynomials} $\{H_{n}(x)\}_{n\ge 1}$ which is defined as 
$$H_{n}(x)= (-1)^{n} e^{\frac{x^{2}}{2}}\bigg[\frac{d^{n}}{dx^{n}}e^{-\frac{x^{2}}{2}}\bigg].$$ First few Hermite polynomials are 
\beqn
H_{0}(x)=1,\s H_{1}(x)=x,\s H_{2}(x)=x^{2}-1,\s H_{3}(x)=x^{3}-3x,\s H_{4}(x)=x^{4}-6x^{2}+3,\non\\
H_{5}(x)=y^5-10y^3 +15y,\s\s H_{6}(x)=y^6 - 15y^4 +45y^2 -15\non.
\eeqn
 Following properties hold where $\phi(x)$ is the density of normal distribution: For all $a\in\R,$
\beqn
\int_{-\infty}^{a}H_{1}(y)\phi(y)dy= -\phi(a)\s\text{and}\s\int_{-\infty}^{a}H_{n}(y)\phi(y)dy= -H_{n-1}(a)\phi(a)\s\s\quad\forall n\ge 2.\label{herm}
\eeqn

Expressing (\ref{expr1}) with each coefficient in terms of Hermite polynomials we get,
\beqn
f_{\mathcal{G}}(y)&=&\phi(y)\Bigg(1+\frac{1}{\sqrt{n}}\lt(G_{1}H_{1}(y)+G_{3}H_{3}(y)\rt)+\frac{1}{n}\bigg(G_{2}H_{2}(y)+G_{4}H_{4}(y)\non\\&+&G_{6}H_{6}(y)+W_{n}^{(1)}\frac{H_{1}(y)}{\sqrt{c}}\bigg)\Bigg)
+o_{p_{{\theta_{0}}}}\lt(\frac{1}{n}\rt)\non%\label{expr2}
\eeqn
where 
\beqn
G_{1}:= A_{1}+3A_{3}, \s G_{2}=A_{2}+6A_{4} +45A_{6}, \s G_{3}:= A_{3},\s G_{4}=A_{4}+15A_{6}\s G_{6}=A_{6}.\non 
\eeqn
Define further
\beqn
\beta_{1} := G_{1} + G_{3}.H_{2}(z),\s\beta_{2}:= 2z\beta_{1}G_{3}-\frac{1}{2}\beta^{2}_{1}z+G_{2}H_{1}(z)+G_{4}H_{3}(z)+G_{6}H_{5}(z)+\frac{W_{n}^{(1)}(\X)}{\sqrt{c}}\s
\eeqn

This following illustration is similar with Theorem 2.3.1 of \cite{datta2004probability} which gives  an asymptotic expansion of $(1-\alpha)$-th fiducial quantile.
\begin{Corollary}\label{Cor1}
 Denote $\theta^{(1-\alpha)}(\X,\mathcal{G}) := \hat{\theta} + (nc)^{-\frac{1}{2}} (z + n^{-\frac{1}{2}}\beta_1+n^{-1}\beta_2).$  Suppose Assumptions $(A_{1})$-$(A_{5})$ of asymptotic normality of likelihood expansion, along with Assumption \ref{str},\ref{ext} \ref{ext2} with $m=1$ hold. Then we have
\beqn
P^{\mathcal{G}}\lt[ \theta \leq \theta^{(1-\alpha)}(\X,\mathcal{G}) \bigg| \X\rt] = 1 - \alpha + o_{p_{{\theta_0}}}(n^{-1}).
\eeqn
\end{Corollary}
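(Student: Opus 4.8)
The plan is to treat this as a Cornish--Fisher inversion of the Edgeworth-type expansion \eqref{expr1} of the fiducial density. Passing to the rescaled variable $y=\sqrt{nc}(\theta-\hat\theta)$, the proposed limit $\theta^{(1-\alpha)}(\X,\mathcal{G})$ corresponds to $y^\star := z+n^{-1/2}\beta_1+n^{-1}\beta_2$, so that
\[
P^{\mathcal{G}}\big[\theta\le\theta^{(1-\alpha)}(\X,\mathcal{G})\mid\X\big]=\int_{-\infty}^{y^\star}f_{\mathcal{G}}(y\mid\X)\,dy.
\]
First I would substitute the Hermite form of the density and integrate term by term using \eqref{herm}, which give $\int_{-\infty}^{y^\star}\phi(y)\,dy=\Phi(y^\star)$ and $\int_{-\infty}^{y^\star}H_k(y)\phi(y)\,dy=-H_{k-1}(y^\star)\phi(y^\star)$ for $k\ge1$. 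This yields
\[
\Phi(y^\star)-\frac{\phi(y^\star)}{\sqrt n}\big(G_1+G_3H_2(y^\star)\big)-\frac{\phi(y^\star)}{n}\Big(G_2H_1(y^\star)+G_4H_3(y^\star)+G_6H_5(y^\star)+\tfrac1{\sqrt c}W_n^{(1)}(\X)\Big)+o_{p_{\theta_0}}(n^{-1}).
\]

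Next I would Taylor-expand each function of $y^\star$ about $z$, using $\Phi'(z)=\phi(z)$, $\Phi''(z)=-z\phi(z)$ together with the identity $\tfrac{d}{dy}[H_{k-1}(y)\phi(y)]=-H_k(y)\phi(y)$ obtained by differentiating \eqref{herm}. Since $y^\star-z=n^{-1/2}\beta_1+n^{-1}\beta_2$, the $n^{-1/2}$ terms combine to $\phi(z)\beta_1-\phi(z)\big(G_1+G_3H_2(z)\big)$, which vanishes precisely because $\beta_1:=G_1+G_3H_2(z)$. Hence the $n^{-1/2}$ coefficient is identically zero by the choice of $\beta_1$.

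The crux is then the $n^{-1}$ coefficient, which collects: (i) from $\Phi(y^\star)$ the terms $\phi(z)\beta_2-\tfrac12 z\phi(z)\beta_1^2$; (ii) from the first-order Taylor correction of the $n^{-1/2}$ Hermite block the term $+\phi(z)\beta_1\big(G_1H_1(z)+G_3H_3(z)\big)$; and (iii) from the leading value of the $n^{-1}$ Hermite block the term $-\phi(z)\big(G_2H_1(z)+G_4H_3(z)+G_6H_5(z)+\tfrac1{\sqrt c}W_n^{(1)}(\X)\big)$. Dividing by $\phi(z)$, I would verify that their sum reduces to the finite Hermite identity
\[
\beta_2-\tfrac12 z\beta_1^2+\beta_1\big(G_1H_1(z)+G_3H_3(z)\big)-G_2H_1(z)-G_4H_3(z)-G_6H_5(z)-\tfrac1{\sqrt c}W_n^{(1)}(\X)\equiv0,
\]
which holds for the displayed $\beta_2$ because $z\beta_1-\big(G_1H_1(z)+G_3H_3(z)\big)=2G_3z$; this is exactly what turns the contribution $-\tfrac12 z\beta_1^2+\beta_1(G_1H_1(z)+G_3H_3(z))$ into the form $2z\beta_1G_3-\tfrac12 z\beta_1^2$ appearing in the definition of $\beta_2$. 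The fiducial-specific $W_n^{(1)}(\X)/\sqrt c$ term enters $\beta_2$ additively and is precisely cancelled by the integral of the corresponding density term.

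The main obstacle I anticipate is analytic rather than algebraic: controlling the remainder. I would have to show that the $o_{p_{\theta_0}}(n^{-1})$ error in the \emph{pointwise} expansion \eqref{expr1} integrates to $o_{p_{\theta_0}}(n^{-1})$ over all of $\R$, and that the Taylor remainders in $\phi(y^\star)$ and $H_k(y^\star)\phi(y^\star)$ about $z$ are genuinely of order $n^{-3/2}$. Both follow from the Gaussian tails of $\phi$ combined with facts already used in Lemma~\ref{lem1}: under Assumption~\ref{ext2} the normalized Jacobian and its derivatives converge and are $O_{P_{\theta_0}}(1)$, the classical likelihood conditions give $c,a,a_4=O_{P_{\theta_0}}(1)$ with $\hat\theta\to\theta_0$, and $W_n^{(1)}(\X)=O_{P_{\theta_0}}(1)$, so that $y^\star-z=O_{P_{\theta_0}}(n^{-1/2})$. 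Assembling the vanishing $n^{-1/2}$ and $n^{-1}$ coefficients with this remainder bound gives $P^{\mathcal{G}}[\theta\le\theta^{(1-\alpha)}(\X,\mathcal{G})\mid\X]=\Phi(z)+o_{p_{\theta_0}}(n^{-1})=1-\alpha+o_{p_{\theta_0}}(n^{-1})$, since $z=z_\alpha$ satisfies $\Phi(z)=1-\alpha$.
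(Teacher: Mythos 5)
Your proposal is correct and follows essentially the same route as the paper's proof: rescaling to $y=\sqrt{nc}(\theta-\hat\theta)$, term-by-term Hermite integration via \eqref{herm}, and Taylor expansion about $z$ so that the $n^{-1/2}$ and $n^{-1}$ coefficients vanish exactly by the definitions of $\beta_1$ and $\beta_2$. Your explicit check of the Hermite identity $z\beta_1-\bigl(G_1H_1(z)+G_3H_3(z)\bigr)=2G_3z$ and your remainder control via Lemma~\ref{lem1} and Assumption~\ref{ext2} only spell out details the paper leaves implicit.
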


\begin{proof}
The concerned quantity
\beqn
&&P^{\mathcal{G}}\lt[\theta \leq \theta^{(1-\alpha)}(\X,\mathcal{G}) \bigg| \X\rt]= P\lt[\theta \leq \hat{\theta} +(nc)^{-\frac{1}{2}}\lt\{z+n^{-\frac{1}{2}}\beta_{1}+n^{-1}\beta_{2}\rt\}\bigg| \X\rt]\nonumber\\
&=& P\lt[y\leq z+ n^{-\frac{1}{2}}\beta_{1}+n^{-1}\beta_{2} \bigg| \X\rt]\nonumber\\
&=&\int_{-\infty}^{z+ n^{-\frac{1}{2}}\beta_{1}+n^{-1}\beta_{2}}\phi(y)\lt[1+\frac{1}{\sqrt{n}}\lt(G_{1}H_{1}(y)+G_{3}H_{3}(y)\rt)\rt]dy\non\\&+&\frac{1}{n}\int_{-\infty}^{z+ n^{-\frac{1}{2}}\beta_{1}+n^{-1}\beta_{2}}\phi(y)\lt[\lt(G_{2}H_{2}(y)+G_{4}H_{4}(y)+G_{6}H_{6}(y)+W_{n}^{(1)}\frac{H_{1}(y)}{\sqrt{c}}\rt)\rt]dy +o_{p}(n^{-1}).\non
\eeqn
Using the properties of Hermite polynomials on (\ref{herm}) one easily gets
\beqn
&&P^{\mathcal{G}}\lt[\theta \leq \theta^{(1-\alpha)}(F,\X,n)\big| \X\rt]=\Phi(z+ n^{-\frac{1}{2}}\beta_{1}+n^{-1}\beta_{2})- n^{-\frac{1}{2}}.\phi(z+n^{-\frac{1}{2}}\beta)\bigg[G_{1}\label{eqq1}\\&+&G_{3}H_{2}(z+n^{-\frac{1}{2}}\beta_1)\bigg]
-n^{-1}\phi(z) \lt[G_{2}H_{1}(z)+G_{4}H_{3}(z)+G_{6}H_{5}(z)+\frac{W_{n}^{(1)}(\X)}{\sqrt{c}}\rt]+o_{p}(n^{-1}).\non
\eeqn
Using Taylor's expansions of $\Phi(x),\phi(x)$ and accumulating the higher order terms into $o_{p}(n^{-1}),$ the RHS of (\ref{eqq1}) is simplified to
\beqn
P^{\mathcal{G}}\lt[\theta \leq \theta^{(1-\alpha)}(\X,\mathcal{G})\big| \X\rt]&=&\Phi(z)+n^{-\frac{1}{2}}\phi(z)\lt\{\beta_{1}-G_{1}-G_{3}H_{2}(z)\rt\}+n^{-1}\phi(z)\bigg[\beta_{2}-2z\beta_{1}G_{3}\nonumber\\ -\frac{1}{2}\beta^{2}_{1}z+\beta_{1}z\bigg\{G_{1}+G_{3}H_{2}(z)\bigg\}&-&G_{2}H_{1}(z)-G_{4}H_{3}(z)-G_{6}H_{5}(z)-\frac{W_{n}^{(1)}(\X)}{\sqrt{c}}\bigg]+o_{p}(n^{-1})\non\\
&=& 1-\alpha +o_{p}(n^{-1})\label{eq2}
\eeqn
where (\ref{eq2}) follows from the definitions of $\beta_1$ and $\beta_2$. Corollary \ref{Cor1} follows from that.
\end{proof}

Higher order asymptotics in context of Probability matching prior is an old topic and well documented in \cite{datta2004probability}. 
The idea of Shrinkage method was essentially originated from \cite{bickel1990decomposition} in context of establishing higher order asymptotics of Bertlett test statistics. In general it is used to find an expansion of $E_{\theta_{0}}\lt[g(\X,\theta)\rt]$ for any function $g(\X,\theta)$ (in our case $g(\X,\theta) := 1_{\{\theta \leq \theta^{(1-\alpha)}(\X,\mathcal{G})\}}$). Some relevant works on probability matching data dependent prior were done in  \cite{mukerjee2008data, ong2010data} but \textit{data dependence is either coming from moments or the maximum likelihood estimator. In comparison to that here the term $W_{n}^{(1)}(\X)$ is much like a ratio estimator where its higher order expansion is interestingly critical for the terms after first order which makes the following calculation relevant.} In order to implement Shrinkage method one formulates an auxiliary prior $\bar{\pi}$ with properties that it is proper, supported on a compact set, having true $\theta_{0}$ in its interior. It  vanishes on the boundary of the support while taking strictly positive values in the interior. It also satisfies all the conditions $B_{m}$ ($m=1,2$) in \cite{bickel1990decomposition} ensuring  smoothness of $\bar{\pi}$ ,and  $\log \bar{\pi}$ and and its derivatives near the boundary of the support. Basic steps of Shrinkage method (for upto second order) are following:

\begin{enumerate}[(a)]
\item \textbf{Step 1:} Start with an auxiliary prior $\bar{\pi}$ with a compact support $\subseteq \mathcal{H}$ containing $\theta_{0}$ as an interior point. We will find the expansion of  $E^{\bar{\pi}}( g(\X,\theta)| \X)$ upto $o_{P_{\theta_{0}}}(\frac{1}{n})$.

\item \textbf{Step 2:} Under the assumption that the $\X=\lt(X_{1},X_{2},X_{3}\ldots,X_{n}\rt)$ generated from $ F(\cdot \mid \theta)$ we compute :  $\lambda(\theta) := E_{\theta}E^{\bar{\pi}}( g(\X,\theta) | \X)$ upto $o(\frac{1}{n}).$

\item \textbf{Step 3:} Compute $\int \lambda(\theta) \bar{\pi}(d\theta)$ when $\bar{\pi} \leadsto \delta_{\theta_{0}}(.)$. The final quantity after taking the weak limit leads to the required expansion of $E_{\theta_{0}}\lt( g(\X,\theta_{\theta_{0}})\rt)$ upto $o(\frac{1}{n}).$
\end{enumerate}

\begin{Proposition}\label{cor2}
Note  if one observes $T(\X) := E_{\bar{\pi}}(g(\X,\theta)\mid \X)$ for an integrable function $T(\X)$ (with respect to $P_{\theta_{}}$ for  $\theta\in(\theta_{0}-\delta,\theta_{0}+\delta)$ for some $\delta>0$) after \textbf{Step 1} of Shrinkage method,  \textbf{Step 2} and \textbf{Step 3} virtually compute $E_{\theta_{0}}T(\X)$. Since $\lambda(\theta)=E_{\theta}T(\X),$ through Dominated Convergence Theorem and a consequence of the weak limit gives $$\lim_{\bar{\pi} \leadsto \delta_{\theta_{0}}(\cdot)}\int E_{\theta}\lt[T(\X)\rt]\bar{\pi}(d\theta).$$ 
A good illustration on how Shrinkage method works is given at Chapter 1 of \cite{datta2004probability}.
\end{Proposition}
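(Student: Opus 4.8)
The plan is to strip the three-step procedure down to the single averaging identity $\int E_\theta[T(\X)]\,\bar\pi(d\theta)\to E_{\theta_0}[T(\X)]$ as $\bar\pi\leadsto\delta_{\theta_0}$, and to prove it by continuity of the integrand together with weak convergence of the priors. Step~2 produces exactly $\lambda(\theta)=E_\theta[T(\X)]=\int T(\mb x)\,dP_\theta(\mb x)$, which is finite on $(\theta_0-\delta,\theta_0+\delta)$ by the standing integrability hypothesis on $T$, and Step~3 forms the prior average $\int\lambda(\theta)\,\bar\pi(d\theta)$. So the entire content is the claim $\lim_{\bar\pi\leadsto\delta_{\theta_0}}\int\lambda(\theta)\,\bar\pi(d\theta)=\lambda(\theta_0)$.

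First I would verify continuity of $\lambda$ near $\theta_0$. Under Assumption~\ref{str} the model has $\theta$-independent support and $\theta\mapsto f(\mb x\mid\theta)$ is smooth, so $f(\mb x\mid\theta)\to f(\mb x\mid\theta_0)$ pointwise; as these are densities with common total mass, Scheff\'e's lemma gives $\|f(\cdot\mid\theta)-f(\cdot\mid\theta_0)\|_1\to0$, whence $|\lambda(\theta)-\lambda(\theta_0)|\le\|T\|_\infty\,\|f(\cdot\mid\theta)-f(\cdot\mid\theta_0)\|_1\to0$ whenever $T$ is bounded --- which holds for the relevant $g=1_{\{\theta\le\theta^{(1-\alpha)}(\X,\mathcal{G})\}}$, where $0\le T\le1$. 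For a general integrable $T$ the same continuity follows from Dominated Convergence using the envelope supplied by Assumption~\ref{str}.

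Second I would pass to the shrinkage limit. Each admissible $\bar\pi$ is proper, compactly supported with $\theta_0$ interior, vanishing on the boundary, and satisfies the $B_m$ smoothness conditions of \cite{bickel1990decomposition}; the family collapses weakly to $\delta_{\theta_0}$. Since $\lambda$ is bounded and continuous on a neighborhood eventually containing the support of $\bar\pi$, the Portmanteau theorem (equivalently, reading $\bar\pi$ as a shrinking approximate identity and applying Dominated Convergence a second time) yields $\int\lambda(\theta)\,\bar\pi(d\theta)\to\lambda(\theta_0)=E_{\theta_0}[T(\X)]$, which is the stated conclusion. The reason this identity is the engine of the method is a Bayes/Fubini interchange: when $T=E_{\bar\pi}(g(\X,\vartheta)\mid\X)$ is a posterior mean, $\int E_\theta[T(\X)]\,\bar\pi(d\theta)=\int E_\vartheta[g(\X,\vartheta)]\,\bar\pi(d\vartheta)$, so the same limit simultaneously delivers the target coverage $E_{\theta_0}[g(\X,\theta_0)]$.

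I expect the genuine difficulty to be bookkeeping rather than measure theory. In the actual application the statistic is $T=T_{\bar\pi}$, depending through the posterior ratio on the very prior being shrunk, so $\lambda=\lambda_{\bar\pi}$ drifts with $\bar\pi$ and the limit must be taken along a moving integrand. The remedy is to substitute the Step~1 expansion of $T_{\bar\pi}$ first: its leading coefficients are $\X$-measurable functions determined by the centering $\theta_0$ (built from the $A_i$, $G_i$ and $W_n^{(1)}$ of \eqref{expr1}) with a remainder that is $o_{P_{\theta_0}}(n^{-1})$ uniformly, so the bound controlling both limiting passages does not degenerate as $\bar\pi\leadsto\delta_{\theta_0}$. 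The boundedness $0\le T\le1$ makes this uniform control automatic; the only point requiring care is that the shrinking supports of $\bar\pi$ stay inside $(\theta_0-\delta,\theta_0+\delta)$, so that the continuity of $\lambda$ remains available throughout.
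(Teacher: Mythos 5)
Your core argument is correct and is essentially the route the paper intends: Step 2 produces $\lambda(\theta)=E_{\theta}[T(\X)]$, the content of the Proposition is the single limit $\int\lambda(\theta)\,\bar{\pi}(d\theta)\to\lambda(\theta_{0})$, and you obtain it from continuity of $\lambda$ (Scheff\'e for bounded $T$, dominated convergence in general) together with the weak convergence $\bar{\pi}\leadsto\delta_{\theta_{0}}$ via Portmanteau. The paper offers exactly this ``DCT plus weak limit'' sketch and nothing more, so on the stated claim you and the paper agree; moreover your Bayes/Fubini identity $\int E_{\theta}[T(\X)]\,\bar{\pi}(d\theta)=\int E_{\vartheta}[g(\X,\vartheta)]\,\bar{\pi}(d\vartheta)$ is a genuine clarification that the paper leaves implicit, and it is also the correct resolution of the moving-integrand problem you raise: after the tower-property interchange the integrand no longer depends on $\bar{\pi}$, so the shrinkage limit is taken against a fixed function.

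Your final paragraph's alternative ``remedy,'' however, misdescribes how the method actually copes with the $\bar{\pi}$-dependence of $T_{\bar{\pi}}$, and it would fail if used as the argument. The Step 1 coefficients are not ``$\X$-measurable functions determined by the centering $\theta_{0}$'': the expansion of $P^{\bar{\pi}}\bigl[\theta\le\theta^{(1-\alpha)}(\X,\mathcal{G})\mid\X\bigr]$ contains $\bar{G}_{1},\bar{G}_{2}$ built from $\bar{\pi}'(\hat{\theta}_{n})/\bar{\pi}(\hat{\theta}_{n})$ and $\bar{\pi}''(\hat{\theta}_{n})/\bar{\pi}(\hat{\theta}_{n})$, and these blow up pointwise as $\bar{\pi}\leadsto\delta_{\theta_{0}}$, since derivatives of an approximate identity are unbounded; no uniform-in-$\bar{\pi}$ remainder control is available. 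What saves the computation in the paper's Step 3 is integration by parts, not uniformity: because $\bar{\pi}$ vanishes on the boundary of its support,
\begin{equation*}
\int f(\theta)\,\frac{\bar{\pi}^{(m)}(\theta)}{\bar{\pi}(\theta)}\,\bar{\pi}(d\theta)=\int f(\theta)\,\bar{\pi}^{(m)}(\theta)\,d\theta\;\longrightarrow\;(-1)^{m}f^{(m)}(\theta_{0}),
\end{equation*}
and it is precisely these derivative terms that generate $\Delta_{1}$ and $\Delta_{2}$ in Theorem \ref{uni}. Since your Fubini identity already disposes of the issue for the Proposition as stated, this flaw does not invalidate your proof of the statement, but the remedy paragraph should be deleted or replaced by the integration-by-parts mechanism above.
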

The conditions $B_{m}$ in \cite{bickel1990decomposition} ensures the existence of a set $S$ which contains data $\X$ with probability $P_{\theta}$, $(1-o_{p}(n^{-1}))$ for $\theta \in$ a compact set $K$. For ensuring second order term we need to just assume $B_{2}$ for the auxiliary prior $\bar{\pi}(\cdot)$ containing true $\theta_{0}$ in interior. All the following calculation of the Shrinkage method is a consequence of those assumptions in $B_{2}.$ We will complete the proof of Theorem~\ref{uni} by translating the Shrinkage method 
to the \GFI framework.
\begin{proof}
\begin{enumerate}[(A)]
\item \textbf{Step 1:} We will construct a prior $\bar{\pi}$ with aforementioned smoothness properties and with a compact support with $\theta_{0}$ being an interior point. Now define the following quantities \begin{eqnarray}\label{1}
\bar{G}_{1}:= \bar{A}_{1}+3\bar{A}_{3}, \s \bar{G}_{2}=\bar{A}_{2}+6\bar{A}_{4} +45\bar{A}_{6}, \s \bar{G}_{3}:= \bar{A}_{3},\s \bar{G}_{4}=\bar{A}_{4}+15\bar{A}_{6}\s \bar{G}_{6}=\bar{A}_{6}\non
\eeqn
where
\beqn
\bar{A}_{1}:= c^{-\frac{1}{2}}\frac{\bar{\pi}'(\hat{\theta}_{n})}{\bar{\pi}(\hat{\theta}_{n})},\s \bar{A}_{2}:= \frac{1}{2}c^{-1}\frac{\bar{\pi}''(\hat{\theta}_{n})}{\bar{\pi}(\hat{\theta}_{n})}, \bar{A}_{3}:= A_{3} ,\s \bar{A}_{4}:=\bar{A}_{1}\bar{A}_{3}+\frac{1}{24}c^{-2}a_4, \s\bar{A}_{6}:=A_{6}.&&\non
\end{eqnarray}
By proceeding similarly like Lemma \ref{lem1} or from \cite{ghosh1994higher}, one gets a similar posterior expansion of $\bar{\pi}(y\big|\X):=\bar{\pi}(\hat{\theta}_{n} +\frac{y}{\sqrt{nc}})e^{n[L_{n}(\hat{\theta}_{n} +\frac{y}{\sqrt{nc}})-L_{n}(\hat{\theta}_{n} )] }$ like following display, 
\beqn
\int_{\R}\Bigg|\bar{\pi}(\hat{\theta}_{n} +\frac{y}{\sqrt{nc}})e^{n[L_{n}(\hat{\theta}_{n} +\frac{y}{\sqrt{nc}})-L_{n}(\hat{\theta}_{n} )] }&-& \bar{\pi}(\hat{\theta}_{n})e^{-\frac{y^2}{2}}\lt(1+\frac{\bar{\pi}'(\hat{\theta}_{n})}{\bar{\pi}(\hat{\theta}_{n})}\frac{y}{\sqrt{nc}}+\frac{1}{n}  \frac{\bar{\pi}''(\hat{\theta}_{n})}{\bar{\pi}(\hat{\theta}_{n})}\frac{y^2}{2c}\rt)\non\\ &&\lt(1+R_{n}(\hat{\theta}_{n})+\frac{R_{n}(\hat{\theta}_{n})^2}{2}\rt) \Bigg|dy = o_{P_{\theta_{0}}}\lt(\frac{1}{n}\rt)\s.\non
\eeqn
\beqn
\text{So,} \s\s f^{\bar{\pi}}(\theta \mid \X)= \frac{\bar{\pi}(\theta)e^{n[L_{n}(\theta)-L_{n}(\hat{\theta}_{n} )] }}{\int \bar{\pi}(\theta)e^{n[L_{n}(\theta)-L_{n}(\hat{\theta}_{n} )] }d\theta}=\frac{\bar{\pi}(\hat{\theta}_{n} +\frac{y}{\sqrt{nc}})e^{n[L_{n}(\hat{\theta}_{n} +\frac{y}{\sqrt{nc}})-L_{n}(\hat{\theta}_{n} )] }}{\int \bar{\pi}(\hat{\theta}_{n} +\frac{y}{\sqrt{nc}}))e^{n[L_{n}(\hat{\theta}_{n} +\frac{y}{\sqrt{nc}})-L_{n}(\hat{\theta}_{n} )] }dy}.\non
\eeqn
Using the above expression of ``auxiliary" posterior density of $y=\sqrt{nc}(\theta - \hat{\theta}_{n}),$
\beqn
f^{\bar{\pi}}(y\mid \X)&=&\phi(y)\Bigg(1+\frac{1}{\sqrt{n}}\lt(\bar{G}_{1}H_{1}(y)+\bar{G}_{3}H_{3}(y)\rt)+\frac{1}{n}\bigg(\bar{G}_{2}H_{2}(y)+\bar{G}_{4}H_{4}(y)+\bar{G}_{6}H_{6}(y)\bigg)\Bigg)
\non\\&&+o_{p_{{\theta_{0}}}}(n^{-1}).\,\non%\label{expr2b}
\eeqn
Using the expansion one can write $P^{\bar{\pi}}\lt[\theta \leq \theta^{(1-\alpha)}(\X,\mathcal{G}) \big| \X\rt]$ as
\beqn
&& P\lt[\theta \leq \hat{\theta} +(nc)^{-\frac{1}{2}}\lt\{z+n^{-\frac{1}{2}}\beta_{1}+n^{-1}\beta_{2}\rt\}\bigg| \X\rt]= P^{\bar{\pi}}\lt[y\leq z+ n^{-\frac{1}{2}}\beta_{1}+n^{-1}\beta_{2} \bigg| \X\rt]\nonumber\\
&=&\int_{-\infty}^{z+ n^{-\frac{1}{2}}\beta_{1}+n^{-1}\beta_{2}}\phi(y)\lt[1+\frac{1}{\sqrt{n}}\lt(\bar{G}_{1}H_{1}(y)+\bar{G}_{3}H_{3}(y)\rt)\rt]dy\non\\&+&\frac{1}{n}\int_{-\infty}^{z+ n^{-\frac{1}{2}}\beta_{1}+n^{-1}\beta_{2}}\phi(y)\lt[\lt(\bar{G}_{2}H_{2}(y)+\bar{G}_{4}H_{4}(y)+\bar{G}_{6}H_{6}(y)\rt)\rt]dy +o_{p}(n^{-1})\non
\eeqn
Working similarly like (\ref{eqq1}-\ref{eq2}) we have $P^{\bar{\pi}}\lt[\theta \leq \theta^{(1-\alpha)}(F,\X,n) \big| \X\rt]$
\beqn
&=&\Phi(z)+n^{-\frac{1}{2}}\phi(z)\lt\{\beta_{1}-\bar{G}_{1}-\bar{G}_{3}H_{2}(z)\rt\}+n^{-1}\phi(z)\bigg[\beta_{2}-2z\beta_{1}\bar{G}_{3}\nonumber\\ &&-\frac{1}{2}\beta^{2}_{1}z+\beta_{1}z\lt\{\bar{G}_{1}+\bar{G}_{3}H_{2}(z)\rt\}-\bar{G}_{2}H_{1}(z)-\bar{G}_{4}H_{3}(z)-\bar{G}_{6}H_{5}(z)\bigg]+o_{p}(n^{-1})\non\\
&=& 1-\alpha+n^{-\frac{1}{2}}\phi(z)\lt\{G_{1} - \bar{G}_{1}\rt\}+n^{-1}\phi(z)\bigg\{\beta_{1}z\lt[\bar{G}_{1}-G_{1}\rt]+\lt[G_{2}-\bar{G}_{2}\rt]H_{1}(z)\non\\&&+\lt[G_{4}-\bar{G}_{4}\rt]H_{3}(z)+\frac{W_{n}^{(1)}(\X)}{\sqrt{c}}\bigg\}+o_{p}(n^{-1})\label{s(-1)}\\
&=& 1-\alpha+n^{-\frac{1}{2}}\phi(z)c^{-\frac{1}{2}}\lt\{\frac{J'(\theta_{0},\hat{\theta}_{n})}{J(\theta_{0},\hat{\theta}_{n})} - \frac{\bar{\pi}'(\hat{\theta}_{n})}{\bar{\pi}(\hat{\theta}_{n})}+a_{n}^{(2)}(\theta,\hat{\theta}_{n})\rt\}+n^{-1}\phi(z)z\bigg\{\bigg(c^{-1} \frac{J'(\theta_{0},\hat{\theta}_{n})}{J(\theta_{0},\hat{\theta}_{n})}\non\\&&- \frac{1}{6}c^{-2}l'''(\hat{\theta}_{n})\bigg)\bigg(\frac{\bar{\pi}'(\hat{\theta}_{n})}{\bar{\pi}(\hat{\theta}_{n})}-\frac{J'(\theta_{0},\hat{\theta}_{n})}{J(\theta_{0},\hat{\theta}_{n})}\bigg)+\frac{1}{2}c^{-1}\bigg[\frac{J''(\theta_{0},\hat{\theta}_{n})}{J(\theta_{0},\hat{\theta}_{n})}-\frac{\bar{\pi}''(\hat{\theta}_{n})}{\bar{\pi}(\hat{\theta}_{n})}\bigg]\bigg\}+n^{-1}\phi(z)\frac{a_{n}^{1}(\X,\theta)}{\sqrt{c}}\non\\&&+o_{p_{{\theta_{0}}}}(n^{-1}).\,\label{s1}
\eeqn
where (\ref{s1}) is obtained after a number of simplifications (putting values of the data dependent constants) and using following decomposition of
 $\W^{(1)}_{n}(\X)$ in \eqref{s(-1)}. 
\beqn
\W^{(1)}_{n}(\X) &:=& \sqrt{n}\lt(\frac{J^{(1)}_{n}(\X,\hat{\theta}_{n})}{J_{n}(\X,\hat{\theta}_{n})}-  \frac{J^{(1)}(\theta_0,\hat{\theta}_{n})}{J(\theta_0,\hat{\theta}_{n})}\rt)\non\\&=&\sqrt{n}\lt(\frac{J^{(1)}_{n}(\X,\hat{\theta}_{n})}{J_{n}(\X,\hat{\theta}_{n})}-  \frac{J^{(1)}(\theta,\hat{\theta}_{n})}{J(\theta,\hat{\theta}_{n})}\rt)+\sqrt{n}\lt(\frac{J^{(1)}(\theta,\hat{\theta}_{n})}{J(\theta,\hat{\theta}_{n})}-  \frac{J^{(1)}(\theta_0,\hat{\theta}_{n})}{J(\theta_0,\hat{\theta}_{n})}\rt) \non\\
&=:&a_{n}^{1}(\X,\theta)+\sqrt{n}a_{n}^{(2)}(\theta,\hat{\theta}_{n})\s\s(\text{defining the first and second term})\label{w}
\eeqn
where each of these terms will be analyzed in next step. 
\item \textbf{Step 2:} We will now compute the asymptotic value of $\lambda(\theta) = E_{\theta}P^{\bar{\pi}}( \theta \leq \theta^{(1-\alpha)}(\X,\mathcal{G}) | X).$ Note that asymptotically $a_{n}^{(2)}(\theta,\hat{\theta}_{n})$ converges to $a^{(2)}(\theta):=\frac{J^{(1)}(\theta,\theta)}{J(\theta,\theta)}-  \frac{J^{(1)}(\theta_0,\theta)}{J(\theta_0,\theta)},$ under true $\theta$ which becomes $a^{(2)}(\theta_{0})=0$ when $\theta=\theta_{0}.$ We will treat $a_{n}^{1}(\X,\theta)$ by expanding that term. We take the facility of choosing auxiliary $\bar{\pi}(\cdot)$ in a way such that the expression (\ref{s1}) holds for all data points in a compact set $\bar{S}$ in $\mathbb{R}$ that has $P_{\theta}$ of order $ (1-o(n^{-1}))$ uniformly for all $\theta \in K$, where $K$ is the compact domain of $\bar{\pi}$. Under the assumption that the limits exist and the existence of the set $S\times K$ is ensured by the condition  $B_{m}$ satisfied by $\bar{\pi}$ for $m=2$ in \cite{bickel1990decomposition}. From (\ref{s1})
\beqn
&&\lambda(\theta):=E_{\theta}\lt\{P^{\bar{\pi}}\lt[\theta \leq \theta^{(1-\alpha)}(\X,\mathcal{G}) \Big| \X\rt]\rt\}=1-\alpha+n^{-\frac{1}{2}}\phi(z)I_{\theta}^{-\frac{1}{2}}\bigg\{\frac{J'(\theta_{0},\theta)}{J(\theta_{0},\theta)} - \frac{\bar{\pi}'(\theta)}{\bar{\pi}(\theta)}\non\\&+&a^{(2)}(\theta)\bigg\}
+n^{-1}\phi(z)z\bigg\{\bigg(I_{\theta}^{-1} \frac{J'(\theta_{0},\theta)}{J(\theta_{0},\theta)}- \frac{1}{6}I_{\theta}^{-2}M_{\theta}\bigg)\bigg(\frac{\bar{\pi}'(\theta)}{\bar{\pi}(\theta)}-\frac{J'(\theta_{0},\theta)}{J(\theta_{0},\theta)}\bigg)\non\\&+&\frac{1}{2}I_{\theta}^{-1}\bigg[\frac{J''(\theta_{0},\theta)}{J(\theta_{0},\theta)}-\frac{\bar{\pi}''(\theta)}{\bar{\pi}(\theta)}\bigg]\bigg\}+n^{-1}\phi(z)\frac{E_{\theta}\lt[a_{n}^{1}(\X,\theta)\rt]}{\sqrt{I_{\theta}}}+o_{}(n^{-1})\label{s2}
\eeqn

where $M_{\theta}:=El^{(3)}(\theta|\X)$. By Assumption $a_{i}(\cdot)$ is a continuous function, so in a compact domain($\bar{\pi}$) containing $\theta_{0}$  it will always exist. It is a consequence of the Corollary \ref{cor2} but we need to show the integrability of $E_{\theta}\lt[a^{1}_{n}(\X,\theta)\rt]$ in $\theta\in \text{domain}(\bar{\pi})$. In the following we will give an expansion of $E_{\theta}\lt[a^{1}_{n}(\X,\theta)\rt]$ in terms of $a_{i}(\theta_{0}), J(\theta_{0},\theta), J'(\theta_{0},\theta).$  Now by Assumption \ref{ext}-\ref{as4} we have $a_{i}(\cdot)$ continuous function in $\Theta=\R$. Since also $J(\cdot,\theta)$ won't vanish is $\theta \in (\theta_{0}-\delta,\theta+\delta)$ one can always find a compact neighborhood of $\theta_{0}$ where the quantity $n\to \infty \s E_{\theta}\lt[a^{1}_{n}(\X,\theta)\rt]$ will remain bounded. We will take that compact neighborhood as the  $\text{domain}(\bar{\pi})$.

Now we will prove the higher order expansion of the quantity $E_{\theta}\lt[a^{1}_{n}(\X,\theta)\rt]$. Note that by Taylor's expansion on the function $f(x,y)=\frac{x}{y}$ at the point $(T_1,T_2)$ around $(g_1,g_2)$, we have for some $(g^{*}_1,g^{*}_2)\in (T_1,g_1)\times(T_2,g_2)$
\beqn
\frac{T_1}{T_2}=\frac{g_{1,\theta}}{g_{2,\theta}}+(T_1-g_{1,\theta})\frac{1}{g_{2,\theta}}- (T_2-g_{2,\theta})\frac{g_{1,\theta}}{g^{2}_{2,\theta}}+ (T_2 - g_{2,\theta})^{2}\frac{g^{*}_{1,\theta}}{g_{2,\theta}^{*3}}- (T_1-g_{1,\theta})(T_2-g_{2,\theta})\frac{1}{g_{*2,\theta}}.
\eeqn
That yields $\sqrt{n}[\frac{T_1}{T_2}-\frac{g_{1,\theta}}{g_{2,\theta}}]=\sqrt{n}(T_1-g_{1,\theta})\frac{1}{g_{2,\theta}}- \sqrt{n}(T_2-g_{2,\theta})\frac{g_{1,\theta}}{g^{2}_{2,\theta}}+\sqrt{n}(T_2-g_{2,\theta})[(T_2-g_{2,\theta})\frac{g^{*}_{1,\theta}}{g_{2,\theta}^{*3}} -(T_1-g_{1,\theta})\frac{1}{g_{*2}}].$
Now choosing $T_1:= J'_{n}(\X,\hat{\theta}_{n}),T_2:=J_{n}(\X,\hat{\theta}_{n})$ and their corresponding limits $g_{1,\theta}:= J'(\theta,\theta),g_{2,\theta}:=J(\theta_{},\theta_{})$ one gets $a^{1}_{n}(\X,\theta)$ is $O_{P_{\theta}}(1).$ Last statement is a consequence of Slutsky's theorem and  the Assumption \ref{as4}. Note that $\frac{g^{*}_{1,\theta}}{g_{2,\theta}^{*3}}, \frac{1}{g_{*2,\theta}}$ are $O_{P_{\theta_0}}(1)$ which follows from the fact $g^{*}_{1}\xrightarrow{P_{\theta_0}}g_{1,\theta},$ $g^{*}_{2,\theta}\xrightarrow{P_{\theta}}g_{2,\theta},$ and then using continuity theorem one has $\frac{g^{*}_{1,\theta}}{g_{2,\theta}^{*3}}=\frac{g^{}_{1,\theta}}{g_{2,\theta}^{3}}+o_{P_{\theta_{}}}(1),$ $\frac{1}{g_{*2,\theta}}= \frac{1}{g_{2,\theta}}+o_{P_{_\theta}}(1)\s\text{since }\s g_2\neq 0.$ From Slutsky's theorem the residual term $\sqrt{n}(T_2-g_2)[(T_2-g_2)\frac{g^{*}_{1,\theta}}{g_{2,\theta}^{*3}} -(T_1-g_{1,\theta})\frac{1}{g_{*2,\theta}}]$ will be  $o_{p_{\theta_{}}}(1).$ Our conclusion that $E_{\theta_{}}\lt[a_{n}^{1}(\X,\theta)\rt]=\frac{a_{1}(\theta_{})}{ g_{2,\theta}} - \frac{a_{2}(\theta_{})g_1}{ g^{2}_{2,\theta}}+o(1),$ will follow if we provide an additional detail on the expected residual term:  
\beqn
E_{\theta}\lt[\sqrt{n}(T_2-g_2)\lt[(T_2-g_2)\frac{g^{*}_{1}}{g_{2}^{*3}} -(T_1-g_1)\frac{1}{g_{*2}}\rt]\rt]\to 0\label{res1}
\eeqn
for all $\theta\in \text{domain}(\bar{\pi}).$ Note that 
\beqn
(T_{1}-g_{1})&=&(J'_{n}(\X,\hat{\theta}_{n})-J'(\theta_{},\hat{\theta}_{n}))+((J'(\theta_{},\hat{\theta}_{n})-J'(\theta_{},\theta_{}))\label{res2}\\
(T_{2}-g_{2})&=&(J_{n}(\X,\hat{\theta}_{n})-J(\theta_{},\hat{\theta}_{n}))+((J(\theta_{},\hat{\theta}_{n})-J(\theta_{},\theta_{})).
\eeqn
 Second term after scaling with $\sqrt{n},$ along with the Lipschitz property of $J(\theta,\cdot)$ from Assumption \ref{as4}(c) will give finiteness of the quantity $nE_{\theta}\left[(J(\theta_{},\hat{\theta}_{n})-J(\theta_{},\theta_{})\right]^2<\infty$ (from the asymptotic expansion of MLE $\hat{\theta}_{n}$ under true value $\theta$). Along with that and Assumption \ref{as4}(a) one gets $nE_{\theta}\left[T_{1}-g_{1}\right]^{2}<\infty$ and $nE_{\theta}\left[T_{2}-g_{2}\right]^{2}<\infty$ will follow similarly. Note that these results imply that the set $\{(T_{1},T_{2}):|T_{1}-g_{1}|<\epsilon,|T_{2}-g_{2}|<\epsilon\}$ denoted by $A_{n,\epsilon}$ has probability $P_{\theta}$ of order $(1-O_{P_{{\theta}}}(\frac{1}{n}))$. Since $g_{{2}}$ is away from $0,$ fixing $\epsilon \in(0,g_2)$ we can work with $\frac{g^{*}_{1}}{g_{2}^{*3}}.1_{A_{n,\epsilon}},\frac{1}{g_{*2}}.1_{A_{n,\epsilon}}$  in place of $\frac{g^{*}_{1}}{g_{2}^{*3}},\frac{1}{g_{*2}}$ in the expansion of $a_{n}^{1}(\X,\theta)$ in (\ref{s1}) for $\theta=\theta_0,$ since the residual term (that is non zero with probability $o_{P_{{\theta_0}}}(\frac{1}{\sqrt{n}}$) will be accumulated in the $o_{P_{{\theta_0}}}(\frac{1}{\sqrt{n}})$ term.  Now note $\bigg|\frac{g^{*}_{1}}{g_{2}^{*3}}.1_{A_{n,\epsilon}}\bigg|<\frac{g_{1}+\epsilon}{(g_{{2}}-\epsilon)^{3}},\bigg|\frac{1}{g_{*2}}.1_{A_{n,\epsilon}}\bigg|<\frac{1}{g_{{2}}-\epsilon}\s\text{a.s}.$ Using these along with $nE_{\theta}\left[T_{1}-g_{1}\right]^{2}<\infty, \s nE_{\theta}\left[T_{2}-g_{2}\right]^{2}<\infty,$ (\ref{res1}) will follow by breaking and  analyzing each of the two terms.

\item \textbf{Step 3:} The last step comes from computing $\int \lambda(\theta)\bar{\pi}(d\theta)$ when $\bar{\pi}(\theta) \to \delta_{\theta_{0}}(\theta).$  
Note that if $\alpha(\theta):=\frac{1}{6}I^{-2}_{\theta}M_{\theta} - I_{\theta}^{-1}\frac{J'(\theta_{0},\theta)}{J(\theta_{0},\theta)},$ it follows from (\ref{s2}) that
\beqn
\lambda(\theta)&=&1-\alpha+n^{-\frac{1}{2}}\phi(z)I_{\theta}^{-\frac{1}{2}}\lt\{\frac{J'(\theta_{0},\theta)}{J(\theta_{0},\theta)} - \frac{\bar{\pi}'(\theta)}{\bar{\pi}(\theta)}+a^{(2)}(\theta)\rt\}+n^{-1}\phi(z)z\bigg\{\alpha(\theta)\bigg(\frac{J'(\theta_{0},\theta)}{J(\theta_{0},\theta)}-\frac{\bar{\pi}'(\theta)}{\bar{\pi}(\theta)}\bigg)\non\\&+&\frac{1}{2}I_{\theta}^{-1}\bigg[\frac{J''(\theta_{0},\theta)}{J(\theta_{0},\theta)}-\frac{\bar{\pi}''(\theta)}{\bar{\pi}(\theta)}\bigg]\bigg\}+n^{-1}\phi(z)\frac{E_{\theta}\lt[a_{n}^{1}(\X,\theta)\rt]}{\sqrt{I_{\theta}}}+o_{}(n^{-1}).
\eeqn
From properties of distribution theory one has if $\bar{\pi}(\theta) \rightarrow \delta_{\theta_{0}}(\theta),$ then $$\int f(\theta) .\bar{\pi}(d\theta) \rightarrow f(\theta_{0}), \s\s \int f(\theta) .\frac{\bar{\pi}^{(m)}(d\theta)}{\bar{\pi}(\theta)} \rightarrow (-1)^{m}f^{(m)}(\theta_{0})$$  where for the second result $f$ is an $m$-times differentiable at a neighborhood of  $\theta=\theta_{0}.$ Note that $a^{(2)}(\theta_{0})=0.$ So after taking the weak limit of $\int\lambda(\theta)\bar{\pi}(\theta)$ as $\bar{\pi}(\theta) \rightarrow \delta_{\theta_{0}}(\theta),$ one has
\beqn
P_{\theta_{0}}\lt[\theta_{0}\le \theta^{(1-\alpha)}(\X,\mathcal{G})\rt]&=&1-\alpha+n^{-\frac{1}{2}}\phi(z)\Delta_{1}(\theta_{0})+n^{-1}\phi(z)z\Delta_{2}(\theta_{0})+o_{}(n^{-1})\non
\eeqn
where
\beqn
\Delta_{1}(\theta_{0})&=&I_{\theta_{0}}^{-\frac{1}{2}} \frac{\frac{\partial}{\partial \theta}J(\theta_0,\theta)}{J(\theta_{0},\theta_{0})}\bigg|_{\theta_{0}} + \frac{\partial I^{-\frac{1}{2}}_{\theta}}{\partial \theta}\bigg|_{\theta_{0}}, \non\\
\Delta_{2}(\theta_{0})&=& \bigg\{\bigg(\alpha'(\theta)+\alpha(\theta)\frac{J'(\theta_{0},\theta)}{J(\theta_{0},\theta)}\bigg)+\frac{1}{2}I_{\theta}^{-1}\frac{J''(\theta_{0},\theta)}{J(\theta_{0},\theta)}-\frac{d^{2}}{d\theta^{2}}\bigg[\frac{1}{2}I_{\theta}^{-1}\bigg]\bigg\}\Bigg|_{\theta_{0}}+\frac{E_{\theta_{0}}\lt[a_{n}^{1}(\X,\theta_{0})\rt]}{z\sqrt{I_{\theta_{0}}}}\s\s\label{s31}
\eeqn
\end{enumerate}
$E_{\theta_{0}}\lt[a_{n}^{1}(\X,\theta_{0})\rt]=\frac{a_{1}(\theta_{0})}{ g_2} - \frac{a_{2}(\theta_{0})g_1}{ g^{2}_{2}}+o(1)$
Note that
First term of $\Delta_{2}(\theta_{0})$ in (\ref{s31})
\beqn
&&\bigg(\alpha'(\theta)+\alpha(\theta)\frac{J'(\theta_{0},\theta)}{J(\theta_{0},\theta)}\bigg)+\frac{1}{2}I_{\theta}^{-1}\frac{J''(\theta_{0},\theta)}{J(\theta_{0},\theta)}-\frac{d^{2}}{d\theta^{2}}\bigg[\frac{1}{2}I_{\theta}^{-1}\bigg]\non\\
&=&\frac{1}{J(\theta_{0},\theta)}\bigg[\frac{d}{d\theta}\lt[\alpha(\theta)J(\theta_{0,\theta})\rt]\bigg]+\frac{1}{2}J(\theta_{0},\theta)^{-1}\frac{d}{d\theta}\bigg\{I^{-1}_{\theta}J'(\theta_{0},\theta)-J(\theta_{0},\theta)\big(\frac{d}{d\theta}I^{-1}_{\theta}\big)\bigg\}\non\\
&=&J(\theta_{0},\theta)^{-1}\bigg[\frac{d}{d\theta}\bigg\{\alpha(\theta)J(\theta_{0},\theta)+\frac{1}{2}I^{-1}_{\theta}J'(\theta_{0},\theta) -\frac{1}{2} J(\theta_{0},\theta)\frac{d}{d\theta}[I_{\theta}^{-1}]\bigg\}\bigg]\label{s32}
\eeqn
Using definition of $\alpha(\theta)$ the R.H.S of (\ref{s32}) becomes
\beqn
&&J(\theta_{0},\theta)^{-1}\bigg[\frac{d}{d\theta}\bigg\{\frac{1}{6}I^{-2}_{\theta}M_{\theta}J(\theta_{0},\theta) -\frac{1}{2}I^{-1}_{\theta}J'(\theta_{0},\theta) -\frac{1}{2} J(\theta_{0},\theta)\frac{d}{d\theta}[I_{\theta}^{-1}]\bigg\}\bigg]\non\\
&=&J(\theta_{0},\theta)^{-1}\bigg[\frac{d}{d\theta}\bigg\{\frac{1}{6}I^{-2}_{\theta}M_{\theta}J(\theta_{0},\theta) -\frac{d}{d\theta}\bigg[\frac{1}{2} I_{\theta}^{-1}J(\theta_{0},\theta)\bigg]\bigg\}\bigg]\label{s3l}
\eeqn

Combining two estimates from (\ref{s31}) and (\ref{s3l}) with taking the limit at $\theta=\theta_{0}$ one gets the second order term and the conclusion of Theorem~\ref{uni} follows.\end{proof}

%\nocite{mukerjee2008data}

\bibliographystyle{plain}
\bibliography{bib}

\begin{thebibliography}{10}

\bibitem{Basu1964}
D.~Basu.
\newblock Recovery of ancillary information.
\newblock {\em Sankhya}, 26:3 -- 16, 1964.

\bibitem{bayarri2012criteria}
MJ~Bayarri, JO~Berger, A~Forte, G~Garc{\'\i}a-Donato, et~al.
\newblock Criteria for bayesian model choice with application to variable
  selection.
\newblock {\em The Annals of statistics}, 40(3):1550--1577, 2012.

\bibitem{berger1992development}
James~O Berger and Jos{\'e}~M Bernardo.
\newblock On the development of reference priors.
\newblock {\em Bayesian statistics}, 4(4):35--60, 1992.

\bibitem{berger2009formal}
James~O Berger, Jos{\'e}~M Bernardo, and Dongchu Sun.
\newblock The formal definition of reference priors.
\newblock {\em The Annals of Statistics}, pages 905--938, 2009.

\bibitem{berger2012objective}
James~O Berger, Jose~M Bernardo, and Dongchu Sun.
\newblock Objective priors for discrete parameter spaces.
\newblock {\em Journal of the American Statistical Association},
  107(498):636--648, 2012.

\bibitem{berger2008objective}
James~O Berger and Dongchu Sun.
\newblock Objective priors for the bivariate normal model.
\newblock {\em The Annals of Statistics}, pages 963--982, 2008.

\bibitem{bickel1990decomposition}
Peter~J Bickel and JK~Ghosh.
\newblock A decomposition for the likelihood ratio statistic and the bartlett
  correction--a bayesian argument.
\newblock {\em Annals of Statistics}, pages 1070--1090, 1990.

\bibitem{CasellaBerger2002}
George Casella and Roger~L. Berger.
\newblock {\em Statistical {I}nference}.
\newblock Wadsworth and Brooks/Cole Advanced Books and Software, Pacific Grove,
  CA, 2nd edition, 2002.

\bibitem{datta2004probability}
Gauri~Sankar Datta and Rahul Mukerjee.
\newblock {\em Probability matching priors: higher order asymptotics}, volume
  178.
\newblock Springer, 2004.

\bibitem{dempster2008dempster}
Arthur~P Dempster.
\newblock The dempster--shafer calculus for statisticians.
\newblock {\em International Journal of Approximate Reasoning}, 48(2):365--377,
  2008.

\bibitem{edlefsen2009estimating}
Paul~T Edlefsen, Chuanhai Liu, and Arthur~P Dempster.
\newblock Estimating limits from poisson counting data using dempster-shafer
  analysis.
\newblock {\em The Annals of Applied Statistics}, pages 764--790, 2009.

\bibitem{fisher1935fiducial}
Ronald~A Fisher.
\newblock The fiducial argument in statistical inference.
\newblock {\em Annals of Eugenics}, 6(4):391--398, 1935.

\bibitem{fisher1933concepts}
Ronald~Aylmer Fisher.
\newblock The concepts of inverse probability and fiducial probability
  referring to unknown parameters.
\newblock {\em Proceedings of the Royal Society of London. Series A},
  139(838):343--348, 1933.

\bibitem{fraser2010second}
Ailana~M Fraser, Donald~AS Fraser, Ana-Maria Staicu, et~al.
\newblock Second order ancillary: A differential view from continuity.
\newblock {\em Bernoulli}, 16(4):1208--1223, 2010.

\bibitem{fraser2004ancillaries}
DAS Fraser et~al.
\newblock Ancillaries and conditional inference.
\newblock {\em Statistical Science}, 19(2):333--369, 2004.

\bibitem{fraser2011bayes}
DAS Fraser et~al.
\newblock Is bayes posterior just quick and dirty confidence?
\newblock {\em Statistical Science}, 26(3):299--316, 2011.

\bibitem{fraser2008exponential}
DAS Fraser and A~Naderi.
\newblock Exponential models: Approximations for probabilities.
\newblock {\em Biometrika}, 94(2):1--9, 2008.

\bibitem{fraser2010default}
DAS Fraser, N~Reid, E~Marras, and GY~Yi.
\newblock Default priors for bayesian and frequentist inference.
\newblock {\em Journal of the Royal Statistical Society: Series B (Statistical
  Methodology)}, 72(5):631--654, 2010.

\bibitem{fraser2005model}
DAS Fraser, N~Reid, and A~Wong.
\newblock What a model with data says about theta.
\newblock {\em Internat. J. Statist. Sci}, 3:163--178, 2005.

\bibitem{ghosh1994higher}
Jayanta~K Ghosh.
\newblock Higher order asymptotics.
\newblock In {\em NSF-CBMS Regional Conference Series in Probability and
  Statistics}, pages i--111. JSTOR, 1994.

\bibitem{Ghosh2011}
Malay Ghosh.
\newblock {Objective Priors: An Introduction for Frequentists}.
\newblock {\em Statistical Science. A Review Journal of the Institute of
  Mathematical Statistics}, 26(2):187--202, 2011.

\bibitem{hannig2009generalized}
Jan Hannig.
\newblock On generalized fiducial inference.
\newblock {\em Statistica Sinica}, 19(2):491, 2009.

\bibitem{hannig2013generalized}
Jan Hannig.
\newblock Generalized fiducial inference via discretization.
\newblock {\em Statist. Sinica}, 23(2):489--514, 2013.

\bibitem{hannig2006fiducial}
Jan Hannig, Hari Iyer, and Paul Patterson.
\newblock Fiducial generalized confidence intervals.
\newblock {\em Journal of the American Statistical Association},
  101(473):254--269, 2006.

\bibitem{kass1996selection}
Robert~E Kass and Larry Wasserman.
\newblock The selection of prior distributions by formal rules.
\newblock {\em Journal of the American Statistical Association},
  91(435):1343--1370, 1996.

\bibitem{martin2013inferential}
Ryan Martin and Chuanhai Liu.
\newblock Inferential models: A framework for prior-free posterior
  probabilistic inference.
\newblock {\em Journal of the American Statistical Association},
  108(501):301--313, 2013.

\bibitem{martin2014conditional}
Ryan Martin and Chuanhai Liu.
\newblock Conditional inferential models: combining information for prior-free
  probabilistic inference.
\newblock {\em Journal of the Royal Statistical Society: Series B (Statistical
  Methodology)}, 77:195--217, 2014.

\bibitem{martin2013marginal}
Ryan Martin and Chuanhai Liu.
\newblock Marginal inferential models: prior-free probabilistic inference on
  interest parameters.
\newblock {\em Journal of American Statistical Association}, page To appear,
  2015.

\bibitem{martin2010dempster}
Ryan Martin, Jianchun Zhang, and Chuanhai Liu.
\newblock Dempster--shafer theory and statistical inference with weak beliefs.
\newblock {\em Statistical Science}, pages 72--87, 2010.

\bibitem{mukerjee2008data}
Rahul Mukerjee et~al.
\newblock Data-dependent probability matching priors for empirical and related
  likelihoods.
\newblock In {\em Pushing the Limits of Contemporary Statistics: Contributions
  in Honor of Jayanta K. Ghosh}, pages 60--70. Institute of Mathematical
  Statistics, 2008.

\bibitem{ong2010data}
SH~Ong and Rahul Mukerjee.
\newblock Data-dependent probability matching priors of the second order.
\newblock {\em Statistics}, 44(3):291--302, 2010.

\bibitem{sonderegger2014fiducial}
Derek~L Sonderegger and Jan Hannig.
\newblock Fiducial theory for free-knot splines.
\newblock {\em Springer Proceedings in Mathematics \& Statistics}, page 155,
  2014.

\bibitem{syversveen1998noninformative}
Anne~Randi Syversveen.
\newblock Noninformative bayesian priors. interpretation and problems with
  construction and applications.
\newblock {\em Preprint Statistics}, 3, 1998.

\bibitem{taraldsen2013fiducial}
Gunnar Taraldsen, Bo~Henry Lindqvist, et~al.
\newblock Fiducial theory and optimal inference.
\newblock {\em The Annals of Statistics}, 41(1):323--341, 2013.

\bibitem{tsui1989generalized}
Kam-Wah Tsui and Samaradasa Weerahandi.
\newblock Generalized p-values in significance testing of hypotheses in the
  presence of nuisance parameters.
\newblock {\em Journal of the American Statistical Association},
  84(406):602--607, 1989.

\bibitem{veronese2014fiducial}
Piero Veronese and Eugenio Melilli.
\newblock Fiducial and confidence distributions for real exponential families.
\newblock {\em Scandinavian Journal of Statistics}, 2014.

\bibitem{weerahandi1995generalized}
Samaradasa Weerahandi.
\newblock Generalized confidence intervals.
\newblock In {\em Exact Statistical Methods for Data Analysis}, pages 143--168.
  Springer, 1995.

\bibitem{WelchPeers1963}
B.~L. Welch and H.~W. Peers.
\newblock On formulae for confidence points based on integrals of weighted
  likelihoods.
\newblock {\em Journal of the Royal Statistical Society, Series B},
  25:318--329, 1963.

\bibitem{xie2013confidence}
Min-ge Xie and Kesar Singh.
\newblock Confidence distribution, the frequentist distribution estimator of a
  parameter: a review.
\newblock {\em International Statistical Review}, 81(1):3--39, 2013.

\bibitem{zhang2011dempster}
Jianchun Zhang and Chuanhai Liu.
\newblock Dempster-shafer inference with weak beliefs.
\newblock {\em Statistica Sinica}, 21(2):475, 2011.

\end{thebibliography}

\begin{table}[]
\caption{Bivariate Normal correlation $\rho$: (One sided coverage of $(1-\alpha)$th quantile)\label{t:rhoOne}}
\begin{center}
\resizebox{18cm}{4 cm}{
\begin{tabular}{c c c cccccc c cccccc  c cccccc}
\hline\hline
&& \multicolumn{6}{c}{$\rho=0.05$}& &\multicolumn{6}{c}{$\rho=0.25$} & &\multicolumn{6}{c}{$\rho=0.5$}  \\
\cline{4-9}  \cline{11-16} \cline{18-23}  \\
&&\multicolumn{6}{c}{Sample size}& &\multicolumn{6}{c}{Sample size} & &\multicolumn{6}{c}{Sample size} \\
\cline{4-9}  \cline{11-16} \cline{18-23}  \\
$1-\alpha$ &&& 2&3&4&5&10&100& &2&3&4&5&10&100& &2&3&4&5&10&100  \\
\hline
&&& &&&&& && &&&&& && &&&&&\\ 
0.025 &&  FS &0.00575 & 0.0096 & 0.0106 & 0.0148 & 0.02 & 0.0243   &&  0.0028 & 0.0088 & 0.0097 & 0.0124 & 0.0176 & 0.022  &&   0.0019 & 0.0051 & 0.0088 & 0.0112 & 0.017 & 0.022 \\ 
       && F1 &  0.0555 & 0.0496 & 0.0394 & 0.0399 & 0.0364 & 0.0257  &&   0.0499 & 0.044 & 0.0375 & 0.0352 & 0.0346 & 0.0238   &&  0.036 & 0.031 & 0.034 & 0.0325 & 0.0294 & 0.0242   \\
&& BJ &  0.036 & 0.0332 & 0.0276 & 0.0297 & 0.0307 & 0.0255  &&    0.0341 & 0.0332 & 0.0281 & 0.0285 & 0.0309 & 0.0235   &&   0.0265 & 0.0254 & 0.0298 & 0.0292 & 0.0274 & 0.0242\\
&& B2 & 0.03275 & 0.0299 & 0.0252 & 0.0271 & 0.0282 & 0.0249  && 0.0316 & 0.032 & 0.0268 & 0.0277 & 0.0287 & 0.0234  &&  0.0255 & 0.0246 & 0.0289 & 0.0284 & 0.0267 & 0.0241\\
&&& &&&&& && &&&&& && &&&&&\\

0.05&& FS & 0.0175& 0.0248& 0.0254& 0.032& 0.0437& 0.048 && 0.0105& 0.0221& 0.0242& 0.0287& 0.041& 0.0479&& 0.0053& 0.0128& 0.0233& 0.0277& 0.038& 0.0451\\
&& F1 & 0.09625& 0.0885& 0.0768& 0.0725& 0.0675& 0.0515 && 0.0906& 0.0802& 0.0692& 0.0655& 0.0646& 0.051 && 0.077& 0.0647& 0.0602& 0.0589& 0.0558& 0.0487\\
&& BJ& 0.06375& 0.063& 0.0573& 0.0591& 0.0602& 0.0504 && 0.0646& 0.0613& 0.056& 0.0549& 0.0576& 0.0501 && 0.0565& 0.0524& 0.0538& 0.0539& 0.0523& 0.0482\\
&& B2& 0.058& 0.0575& 0.0511& 0.0529& 0.0562& 0.0494 && 0.0609& 0.0574& 0.0528& 0.0512& 0.0553& 0.0495 && 0.055& 0.0513& 0.0531& 0.0518& 0.0519& 0.048\\

&&& &&&&& && &&&&& && &&&&&\\

0.50 && FS & 0.4855& 0.4975& 0.4866& 0.4827& 0.4827& 0.5114 && 0.4611& 0.4626& 0.4606& 0.4617& 0.4774& 0.4866 && 0.4167& 0.4216& 0.4394& 0.4401& 0.4654& 0.4899\\
&& F1 & 0.4985& 0.5074& 0.4941& 0.4926& 0.4882& 0.5131 && 0.5156& 0.5122& 0.5032& 0.5009& 0.5077& 0.4956 && 0.5213& 0.5094& 0.5144& 0.5024& 0.504& 0.5016 \\
&& BJ & 0.4945& 0.5057& 0.491& 0.4901& 0.487& 0.5131 && 0.5001& 0.498& 0.4931& 0.491& 0.501& 0.4956 && 0.4956& 0.49& 0.4984& 0.4884& 0.4986& 0.5013\\
&& B2 & 0.4875& 0.5018& 0.4887& 0.4882& 0.4852& 0.5131 && 0.4923& 0.4908& 0.4865& 0.4851& 0.4979& 0.4953 && 0.4914& 0.4859& 0.495& 0.4884& 0.4975& 0.5015\\

&&& &&&&& && &&&&& && &&&&&\\

0.95&& FS & 0.9795 & 0.9714 & 0.9698 & 0.9649 & 0.9562 & 0.9508 && 0.9727 & 0.9635 & 0.9605 & 0.9517 & 0.9505 & 0.9478 && 0.9602 & 0.9485 & 0.9499 & 0.9425 & 0.9438 & 0.9448\\
&& F1 & 0.8935 & 0.9054 & 0.9201 & 0.9229 & 0.9317 & 0.949 && 0.8865 & 0.9074 & 0.9136 & 0.9142 & 0.9361 & 0.948 && 0.8897 & 0.9069 & 0.9207 & 0.9228 & 0.9392 & 0.9471 \\
&& BJ & 0.92825 & 0.9345 & 0.9423 & 0.9414 & 0.9424 & 0.9496 && 0.9281 & 0.9352 & 0.938 & 0.9333 & 0.9442 & 0.9483 && 0.9265 & 0.93 & 0.9384 & 0.9348 & 0.9437 & 0.9474\\
&& B2 & 0.93675 & 0.9419 & 0.9483 & 0.9474 & 0.9478 & 0.9507 && 0.937 & 0.9436 & 0.9471 & 0.9439 & 0.9493 & 0.949 && 0.9411 & 0.9399 & 0.9472 & 0.9431 & 0.9485 & 0.9479\\
&&& &&&&& && &&&&& && &&&&&\\
0.975&& FS & 0.993 & 0.9888 & 0.9865 & 0.985 & 0.9781 & 0.9745 && 0.99 & 0.985 & 0.983 & 0.9777 & 0.9768 & 0.9729 && 0.9846 & 0.9768 & 0.9747 & 0.9725 & 0.9708 & 0.9725 \\
&& F1 & 0.93175 & 0.9458 & 0.9556 & 0.9584 & 0.9637 & 0.9721 && 0.9255 & 0.9426 & 0.9504 & 0.9516 & 0.9637 & 0.973 && 0.9283 & 0.9408 & 0.9534 & 0.9532 & 0.9664 & 0.9737 \\
&& BJ & 0.959 & 0.9646 & 0.9703 & 0.9691 & 0.9694 & 0.973 && 0.96 & 0.9642 & 0.9668 & 0.9651 & 0.9706 & 0.9736 && 0.9612 & 0.9607 & 0.9663 & 0.9667 & 0.9699 & 0.9737\\
&& B2 & 0.96325 & 0.9677 & 0.9733 & 0.9729 & 0.9728 & 0.9743 && 0.966 & 0.9696 & 0.9716 & 0.9698 & 0.9736 & 0.9748 && 0.9694 & 0.9695 & 0.9718 & 0.9713 & 0.9732 & 0.9743\\
\hline
\hline
\end{tabular}
}
\end{center}

\begin{center}
\resizebox{14cm}{4 cm}{
\begin{tabular}{c c c cccccc c cccccc }
\hline\hline
&& \multicolumn{6}{c}{$\rho=0.75$}& &\multicolumn{6}{c}{$\rho=0.9$}  \\
\cline{4-9}  \cline{11-16}   \\
&&\multicolumn{6}{c}{Sample size}& &\multicolumn{6}{c}{Sample size}  \\
\cline{4-9}  \cline{11-16}  \\
$1-\alpha$ &&& 2&3&4&5&10&100& &2&3&4&5&10&100  \\
\hline
&&& &&&&& && &&&&& \\
0.025&& FS & 0.001 & 0.0051 & 0.0086 & 0.0133 & 0.0215 & 0.026 && 0.0003 & 0.0086 & 0.0137 & 0.0193 & 0.013 & 0.0244 \\
&& F1 & 0.0293 & 0.0292 & 0.0235 & 0.0293 & 0.0266 & 0.0265 && 0.0273 & 0.0214 & 0.0248 & 0.0273 & 0.017 & 0.025 \\
&& BJ & 0.0253 & 0.0264 & 0.0219 & 0.0277 & 0.0263 & 0.0265 && 0.025 & 0.0208 & 0.0242 & 0.0273 & 0.017 & 0.025 \\
&& B2 & 0.025 & 0.0262 & 0.0223 & 0.0281 & 0.0267 & 0.0265 && 0.02567 & 0.0208 & 0.0247 & 0.0277 & 0.017 & 0.025 \\
&&& &&&&& && &&&&& \\
0.05&& FS & 0.0043 & 0.0169 & 0.0207 & 0.0314 & 0.0427 & 0.0492 && 0.0027 & 0.0184 & 0.0317 & 0.0413 & 0.0391 & 0.0484 \\
&& F1 & 0.0603 & 0.054 & 0.0492 & 0.0537 & 0.0547 & 0.051 && 0.058 & 0.0462 & 0.0526 & 0.055 & 0.0435 & 0.0492 \\
&& BJ & 0.051 & 0.0484 & 0.0466 & 0.0518 & 0.0539 & 0.0509 && 0.0537 & 0.044 & 0.0521 & 0.055 & 0.0435 & 0.0492 \\
&&B2 & 0.0506 & 0.0489 & 0.0468 & 0.0526 & 0.0542 & 0.0508 && 0.054 & 0.0448 & 0.052 & 0.055 & 0.0445 & 0.0493\\
&&& &&&&& && &&&&& \\
0.50&& FS & 0.3927 & 0.4016 & 0.4256 & 0.4435 & 0.4763 & 0.4938 && 0.377 & 0.4212 & 0.4426 & 0.4651 & 0.4825 & 0.4958 \\
&& F1 & 0.5222 & 0.4976 & 0.5025 & 0.5072 & 0.5085 & 0.5009 && 0.5067 & 0.498 & 0.4966 & 0.5003& 0.5001 & 0.4995 \\
&& BJ & 0.4953 & 0.4797 & 0.4875 & 0.4989 & 0.5066 & 0.5008 && 0.4896 & 0.4882 & 0.4908 & 0.4981 & 0.5013 & 0.4996 \\
&& B2 & 0.4947 & 0.4814 & 0.4925 & 0.5014 & 0.5079 & 0.5006 && 0.4953 & 0.4926 & 0.496 & 0.4998 & 0.5015 & 0.4995 \\
&&& &&&&& && &&&&& \\
0.95 && FS & 0.953 & 0.9388 & 0.9391 & 0.9383 & 0.9407 & 0.9525 && 0.9267 & 0.9316 & 0.9396 & 0.947 & 0.943 & 0.9524 \\
&& F1 & 0.913 & 0.9271 & 0.9369 & 0.9387 & 0.9453 & 0.954 && 0.927 & 0.9362 & 0.9483 & 0.9543 & 0.9471 & 0.9534 \\
&& BJ & 0.939 & 0.9374 & 0.9413 & 0.9422 & 0.9461 & 0.954 && 0.9322 & 0.9381 & 0.948 & 0.953 & 0.9473 & 0.9532 \\
&& B2 & 0.945 & 0.9429 & 0.9438 & 0.9447 & 0.9468 & 0.9543 && 0.9337 & 0.9396 & 0.9498 & 0.954 & 0.948 & 0.9533\\
&&& &&&&& && &&&&& \\
0.975&& FS & 0.9773 & 0.9705 & 0.9685 & 0.9682 & 0.9684 & 0.9781 && 0.9663 & 0.962 & 0.9706 & 0.9703 & 0.9721 & 0.9762 \\
&& F1 & 0.942 & 0.9554 & 0.9621 & 0.9666 & 0.9706 & 0.9793 && 0.9554 & 0.9642 & 0.9727 & 0.9723 & 0.9747 & 0.9763 \\
&& BJ & 0.967 & 0.9664 & 0.9676 & 0.9697 & 0.9712 & 0.9797 && 0.9633 & 0.966 & 0.9733 & 0.973 & 0.9745 & 0.9763 \\
&& B2 & 0.9712 & 0.9707 & 0.9698 & 0.9713 & 0.9719 & 0.9796 && 0.9653 & 0.967 & 0.9747 & 0.9741 & 0.9755 & 0.9763 \\
\hline
\hline
\end{tabular}
}
\end{center}
\end{table}

\begin{table}[]
\caption{Two sided coverage of $(1-\alpha)$th quantile for correlation coefficient $\rho$ in Bivariate Normal:\label{t:rhoTwo} }
\begin{center}
\resizebox{18cm}{2.4 cm}{
\begin{tabular}{c c c cccccc c cccccc  c cccccc}
\hline\hline
&&& \multicolumn{6}{c}{$\rho=0.05$}& &\multicolumn{6}{c}{$\rho=0.25$} & &\multicolumn{6}{c}{$\rho=0.5$}  \\
\cline{4-9}  \cline{11-16} \cline{18-23}  \\
&&&\multicolumn{6}{c}{Sample size}& &\multicolumn{6}{c}{Sample size} & &\multicolumn{6}{c}{Sample size} \\
\cline{4-9}  \cline{11-16} \cline{18-23}  \\
$1-\alpha$ &&& 2&3&4&5&10&100& &2&3&4&5&10&100& &2&3&4&5&10&100  \\
\hline
&&& &&&&& && &&&&& && &&&&&\\ 
0.95 && FS&  0.98725 & 0.9792 & 0.9759 & 0.9702 & 0.9581 & 0.9502 && 0.9872 & 0.9762 & 0.9733 & 0.9653 & 0.9592 & 0.9509 && 0.9827 & 0.9717 & 0.9659 & 0.9613 & 0.9538 & 0.9505\\
&& F1& 0.87625 & 0.8962 & 0.9162 & 0.9185 & 0.9273 & 0.9464 && 0.8756 & 0.8986 & 0.9129 & 0.9164 & 0.9291 & 0.9492 && 0.8923 & 0.9098 & 0.9194 & 0.9207 & 0.937 & 0.9495\\
&& BJ& 0.923 & 0.9314 & 0.9427 & 0.9394 & 0.9387 & 0.9475 && 0.9259 & 0.931 & 0.9387 & 0.9366 & 0.9397 & 0.9501 && 0.9347 & 0.9353 & 0.9365 & 0.9375 & 0.9425 & 0.9495 \\
&& B2& 0.9305 & 0.9378 & 0.9481 & 0.9458 & 0.9446 & 0.9494 && 0.9344 & 0.9376 & 0.9448 & 0.9421 & 0.9449 & 0.9514 && 0.9439 & 0.9449 & 0.9429 & 0.9429 & 0.9465 & 0.9502\\
&&& &&&&& && &&&&& && &&&&&\\ 
0.90&& FS& 0.962 & 0.9466 & 0.9444 & 0.9329 & 0.9125 & 0.9028 && 0.9622 & 0.9414 & 0.9363 & 0.923 & 0.9095 & 0.8999 && 0.9549 & 0.9357 & 0.9266 & 0.9148 & 0.9058 & 0.8997\\
&& F1& 0.79725 & 0.8169 & 0.8433 & 0.8504 & 0.8642 & 0.8975 && 0.7959 & 0.8272 & 0.8444 & 0.8487 & 0.8715 & 0.897 && 0.8127 & 0.8422 & 0.8605 & 0.8639 & 0.8834 & 0.8984\\
&& BJ& 0.8645 & 0.8715 & 0.885 & 0.8823 & 0.8822 & 0.8992 && 0.8635 & 0.8739 & 0.882 & 0.8784 & 0.8866 & 0.8982 && 0.87 & 0.8776 & 0.8846 & 0.8809 & 0.8914 & 0.8992\\
&& B2& 0.87875 & 0.8844 & 0.8972 & 0.8945 & 0.8916 & 0.9013 && 0.8761 & 0.8862 & 0.8943 & 0.8927 & 0.894 & 0.8995 && 0.8861 & 0.8886 & 0.8941 & 0.8913 & 0.8966 & 0.8999\\
\hline
\hline
\end{tabular}
}
\end{center}
\begin{center}
\resizebox{14cm}{2.4 cm}{
\begin{tabular}{c c c cccccc c cccccc }
\hline\hline
&&& \multicolumn{6}{c}{$\rho=0.75$}& &\multicolumn{6}{c}{$\rho=0.9$}  \\
\cline{3-8}  \cline{10-15}   \\
&&&\multicolumn{6}{c}{Sample size}& &\multicolumn{6}{c}{Sample size}  \\
\cline{3-8}  \cline{10-15}  \\
$1-\alpha$ &&& 2&3&4&5&10&100& &2&3&4&5&10&100  \\
\hline
&&& &&&&& && &&&&& \\
0.95&& FS& 0.9763 & 0.9654 & 0.9599 & 0.9547 & 0.9469 & 0.9521 && 0.966 & 0.9534 & 0.9569 & 0.951 & 0.959 & 0.9516 \\
&& F1& 0.9126 & 0.9262 & 0.9385 & 0.9373 & 0.944 & 0.9528 && 0.9276 & 0.9428 & 0.9479 & 0.945 & 0.9575 & 0.9513 \\
&& BJ& 0.9416 & 0.94 & 0.9457 & 0.942 & 0.9449 & 0.9532 && 0.938 & 0.9452 & 0.9491 & 0.9456 & 0.9575 & 0.9513 \\
&& B2& 0.946 & 0.9445 & 0.9475 & 0.9432 & 0.9452 & 0.9531 && 0.9397 & 0.9462 & 0.95 & 0.9463 & 0.9585 & 0.9513\\
&&& &&&&& && &&&&& \\
0.90&& FS& 0.949 & 0.9219 & 0.9184 & 0.9069 & 0.898 & 0.9033 && 0.924 & 0.9132 & 0.9079 & 0.9056 & 0.904 & 0.904 \\
&& F1& 0.853 & 0.8731 & 0.8877 & 0.885 & 0.8906 & 0.903 && 0.869 & 0.89 & 0.8957 & 0.8993 & 0.9035 & 0.9038\\
&& BJ& 0.8883 & 0.889 & 0.8947 & 0.8904 & 0.8922 & 0.9031 && 0.8783 & 0.894 & 0.8959 & 0.898 & 0.9035 & 0.9038\\
&& B2& 0.8943 & 0.894 & 0.897 & 0.8921 & 0.8926 & 0.9035 && 0.8796 & 0.8948 & 0.898 & 0.899 & 0.9035 & 0.9038 \\
\hline
\hline
\end{tabular}
}
\end{center}
\end{table}

\begin{table}[]
\caption{Length of two sided credible region for correlation coefficient $\rho$ of bivariate normal:\label{t:rhoLength}}
\begin{center}
\resizebox{18cm}{2.1 cm}{
\begin{tabular}{c c c cccccc c cccccc  c cccccc}
\hline\hline
&&& \multicolumn{6}{c}{$\rho=0.05$}& &\multicolumn{6}{c}{$\rho=0.25$} & &\multicolumn{6}{c}{$\rho=0.5$}  \\
\cline{3-8}  \cline{10-15} \cline{17-22}  \\
&&&\multicolumn{6}{c}{Sample size}& &\multicolumn{6}{c}{Sample size} & &\multicolumn{6}{c}{Sample size} \\
\cline{4-9}  \cline{11-16} \cline{18-23}  \\
$1-\alpha$ &&& 2&3&4&5&10&100& &2&3&4&5&10&100& &2&3&4&5&10&100  \\
\hline
&&& &&&&& && &&&&& && &&&&&\\ 
0.95 &&FS& 1.49364 & 1.36899 & 1.27441 & 1.19953 & 0.96714 & 0.37729 && 1.49257 & 1.3594 & 1.26564 & 1.18705 & 0.95007 & 0.35126 && 1.49018 & 1.34738 & 1.23445 & 1.14837 & 0.87824 & 0.26944\\
&& F1&1.30301 & 1.23683 & 1.17742 & 1.12299 & 0.93075 & 0.37544 && 1.29118 & 1.2228 & 1.16132 & 1.10211 & 0.90792 & 0.3488 && 1.26177 & 1.17863 & 1.09868 & 1.03532 & 0.81741 & 0.26678\\
&& BJ&1.39732 & 1.30477 & 1.2284 & 1.16429 & 0.95194 & 0.37701 && 1.39049 & 1.28988 & 1.21382 & 1.14514 & 0.93002 & 0.35009 && 1.36781 & 1.25445 & 1.15682 & 1.08357 & 0.84155 & 0.26733 \\
&& B2& 1.42509 & 1.33146 & 1.25336 & 1.18746 & 0.96828 & 0.37859 && 1.41846 & 1.31631 & 1.2386 & 1.16844 & 0.94603 & 0.35132 && 1.39541 & 1.28107 & 1.18146 & 1.10641 & 0.85599 & 0.26777\\
&&& &&&&& && &&&&& && &&&&&\\ 
0.90&& FS&1.38194 & 1.24521 & 1.14665 & 1.07151 & 0.84631 & 0.31902 && 1.38013 & 1.23462 & 1.13722 & 1.0573 & 0.8284 & 0.29614 && 1.3768 & 1.21853 & 1.0989 & 1.01209 & 0.75305 & 0.22558\\
&& F1& 1.14599 & 1.08902 & 1.03593 & 0.98588 & 0.80906 & 0.3174 && 1.13371 & 1.07536 & 1.01918 & 0.96417 & 0.78609 & 0.29403 && 1.10074 & 1.02695 & 0.95368 & 0.89452 & 0.69585 & 0.22337\\
&& BJ& 1.25607 & 1.16406 & 1.091 & 1.02994 & 0.83007 & 0.31877 && 1.2454 & 1.14935 & 1.07534 & 1.00907 & 0.80757 & 0.29513 && 1.21894 & 1.10679 & 1.01296 & 0.94236 & 0.71756 & 0.22381\\
&&B2&1.29028 & 1.19528 & 1.11933 & 1.05547 & 0.84668 & 0.32015 && 1.27944 & 1.18027 & 1.10321 & 1.03449 & 0.82359 & 0.29617 && 1.2522 & 1.137 & 1.03972 & 0.96621 & 0.73094 & 0.22417\\
\hline
\hline
\end{tabular}
}
\end{center}

\begin{center}
\resizebox{14cm}{2.4 cm}{
\begin{tabular}{c c c cccccc c cccccc }
\hline\hline
&&& \multicolumn{6}{c}{$\rho=0.75$}& &\multicolumn{6}{c}{$\rho=0.9$}  \\
\cline{4-9}  \cline{11-16}   \\
&&&\multicolumn{6}{c}{Sample size}& &\multicolumn{6}{c}{Sample size}  \\
\cline{4-9}  \cline{11-16}   \\
$1-\alpha$ &&& 2&3&4&5&10&100& &2&3&4&5&10&100  \\
\hline
&&& &&&&& && &&&&& \\
0.95&& FS& 1.46525 & 1.28939 & 1.14658 & 1.01888 & 0.64729 & 0.14245 && 1.42347 & 1.15463 & 0.911 & 0.71763 & 0.29692 & 0.05759\\
&& F1& 1.14556 & 1.03852 & 0.93649 & 0.84194 & 0.57003 & 0.14144 && 0.95594 & 0.77545 & 0.61772 & 0.50665 & 0.26153 & 0.05742 \\
&& BJ& 1.27695 & 1.13221 & 1.00879 & 0.89941 & 0.58929 & 0.1415 && 1.11493 & 0.88124 & 0.68985 & 0.55331 & 0.26556 & 0.05742\\
&& B2& 1.30573 & 1.15912 & 1.03267 & 0.91951 & 0.59647 & 0.14153 && 1.14417 & 0.90456 & 0.70529 & 0.56209 & 0.26509 & 0.05742\\
&&& &&&&& && &&&&& \\
0.90&& FS&1.34259 & 1.14445 & 0.99232 & 0.86329 & 0.52451 & 0.11868 && 1.28042 & 0.97149 & 0.7207 & 0.54661 & 0.22448 & 0.04794\\
&& F1& 0.97127 & 0.8764 & 0.78256 & 0.69586 & 0.46211 & 0.11786 && 0.76891 & 0.60795 & 0.47136 & 0.38253 & 0.20199 & 0.0478\\
&& BJ& 1.11145 & 0.96876 & 0.84997 & 0.7469 & 0.47662 & 0.1179 && 0.92724 & 0.70058 & 0.52871 & 0.41713 & 0.20448 & 0.0478 \\
&& B2& 1.14355 & 0.99663 & 0.87273 & 0.76473 & 0.48159 & 0.11793 && 0.95648 & 0.71972 & 0.5389 & 0.42164 & 0.20389 & 0.0478 \\
\hline
\hline
\end{tabular}
}
\end{center}
\end{table}

\begin{table}[]
\caption{Mean Absolute Deviation of the medians from true parameter value for Bivariate Normal\label{t:rhoMAD}}
\begin{center}
\resizebox{18cm}{1.5 cm}{
\begin{tabular}{c c c cccccc c cccccc  c cccccc}
\hline\hline
&&& \multicolumn{6}{c}{$\rho=0.05$}& &\multicolumn{6}{c}{$\rho=0.25$} & &\multicolumn{6}{c}{$\rho=0.5$}  \\
\cline{4-9}  \cline{11-16} \cline{18-23}  \\
&&\multicolumn{6}{c}{Sample size}& &\multicolumn{6}{c}{Sample size} & &\multicolumn{6}{c}{Sample size} \\
\cline{4-9}  \cline{11-16} \cline{18-23}  \\
 &&& 2&3&4&5&10&30& &2&3&4&5&10&30& &2&3&4&5&10&30  \\
\hline
&&& &&&&& && &&&&& && &&&&&\\
&& FS&0.435843 & 0.38597 & 0.344296 & 0.317791 & 0.241437 & 0.078172 && 0.42649 & 0.374332 & 0.336432 & 0.312826 & 0.229622 & 0.072592 && 0.403899 & 0.348557 & 0.301868 & 0.276695 & 0.187183 & 0.053905\\
&& F1& 0.524691 & 0.447443 & 0.388507 & 0.352943 & 0.257415 & 0.0789 && 0.503459 & 0.423748 & 0.37091 & 0.339238 & 0.240099 & 0.072854 && 0.442377 & 0.365753 & 0.309883 & 0.280768 & 0.185597 & 0.053688\\
&& BJ& 0.501272 & 0.431655 & 0.377397 & 0.34465 & 0.254559 & 0.07888 && 0.483887 & 0.411335 & 0.362684 & 0.333394 & 0.238553 & 0.072852 && 0.432762 & 0.361719 & 0.308469 & 0.280536 & 0.186311 & 0.053694\\
&& B2& 0.496309 & 0.42824 & 0.374909 & 0.342918 & 0.254039 & 0.078887 && 0.482603 & 0.410959 & 0.362827 & 0.333595 & 0.239031 & 0.07286 && 0.43626 & 0.364774 & 0.311403 & 0.282947 & 0.187164 & 0.053691\\
\hline
\hline
\end{tabular}
}
\end{center}

\begin{center}
\resizebox{14cm}{1.5 cm}{
\begin{tabular}{c c c cccccc c cccccc }
\hline\hline
&&& \multicolumn{6}{c}{$\rho=0.75$}& &\multicolumn{6}{c}{$\rho=0.9$}  \\
\cline{4-9}  \cline{11-16}   \\
&&\multicolumn{6}{c}{Sample size}& &\multicolumn{6}{c}{Sample size}  \\
\cline{4-9}  \cline{11-16}   \\
&&& 2&3&4&5&10&100& &2&3&4&5&10&100  \\
\hline
&&& &&&&& && &&&&& \\
&& FS&0.330269 & 0.257887 & 0.2049 & 0.174615 & 0.103761 & 0.027943 && 0.207926 & 0.129982 & 0.087098 & 0.070213 & 0.038846 & 0.011204\\
&&F1&0.31468 & 0.237052 & 0.186925 & 0.160447 & 0.097869 & 0.027811 && 0.166243 & 0.10787 & 0.074405 & 0.062108 & 0.037389 & 0.01118 \\
&& BJ& 0.318566 & 0.241772 & 0.190785 & 0.163314 & 0.098572 & 0.027812 && 0.173736 & 0.111831 & 0.076352 & 0.06314 & 0.03745 & 0.01118\\
&& B2& 0.321822 & 0.243293 & 0.191266 & 0.163436 & 0.098132 & 0.027808 && 0.17303 & 0.110691 & 0.075421 & 0.062301 & 0.037248 & 0.01118\\
\hline
\hline
\end{tabular}
}
\end{center}
\end{table}

\begin{table}[]
\caption{Scaled Normal N$(\mu,\mu^{q}), \quad q=1$: (One sided coverage of $(1-\alpha)$th quantile)\label{t:qOne}}
\begin{center}
\resizebox{18cm}{4 cm}{
\begin{tabular}{c c c cccccc c cccccc  c cccccc}
\hline\hline
&& \multicolumn{6}{c}{$\mu=0.1$}& &\multicolumn{6}{c}{$\mu=0.5$} & &\multicolumn{6}{c}{$\mu=1$}  \\
\cline{4-9}  \cline{11-16} \cline{18-23}  \\
&& \multicolumn{6}{c}{Sample size}& &\multicolumn{6}{c}{Sample size} & &\multicolumn{6}{c}{Sample size} \\
\cline{4-9}  \cline{11-16} \cline{18-23}  \\
$\alpha$ &&& 2&3&4&5&10&30& &2&3&4&5&10&30& &2&3&4&5&10&30  \\
\hline
&&& &&&&& && &&&&& && &&&&&\\
0.025 && FS&  0.0298 & 0.0263 & 0.0308 & 0.0263 & 0.0248 & 0.02325 && 0.0261 & 0.0288 & 0.0274 & 0.0283 & 0.0262 & 0.03 &&  0.0254 & 0.0257 & 0.0264 & 0.0263 & 0.0219 & 0.0306\\
       && F1&  0.0256 & 0.0228 & 0.0282 & 0.0243 & 0.0234 & 0.022  && 0.0241 & 0.0261 & 0.0255 & 0.0261 & 0.0247 & 0.0296 && 0.0225 & 0.0243 & 0.0247 & 0.0249 & 0.0208 & 0.0293   \\
&& BJ&  0.0254 & 0.0236 & 0.0282 & 0.0234 & 0.0232 & 0.02225&& 0.0247 & 0.0265 & 0.0261 & 0.0266 & 0.0248 & 0.03 && 0.0241 & 0.0245 & 0.0256 & 0.0253 & 0.021 & 0.0293\\
&& B2& 0.0303 & 0.0263 & 0.0291 & 0.0236 & 0.023 & 0.02225 && 0.0271 & 0.0272 & 0.0267 & 0.027 & 0.0246 & 0.03 && 0.0243 & 0.0246 & 0.0255 & 0.0249 & 0.0209 & 0.0293\\
&&& &&&&& && &&&&& && &&&&&\\

0.05&& FS& 0.0574 & 0.0578 & 0.0568 & 0.0563 & 0.052 & 0.048 && 0.0525 & 0.0584 & 0.0542 & 0.0537 & 0.0511 & 0.0596 && 0.0533 & 0.0527 & 0.0518 & 0.0513 & 0.046 & 0.0553\\
&& F1& 0.0506 & 0.0499 & 0.0523 & 0.0501 & 0.0495 & 0.0495 && 0.0471 & 0.0513 & 0.0495 & 0.0497 & 0.0483 & 0.0567 && 0.049 & 0.0492 & 0.0474 & 0.0479 & 0.0426 & 0.0553\\
&& BJ& 0.0507 & 0.0514 & 0.053 & 0.0501 & 0.05 & 0.04575 && 0.0487 & 0.0523 & 0.0503 & 0.0504 & 0.0488 & 0.057 && 0.0509 & 0.0506 & 0.0485 & 0.0483 & 0.0435 & 0.055 \\
&& B2& 0.0585 & 0.0548 & 0.0551 & 0.0512 & 0.0503 & 0.04600 && 0.0525 & 0.0539 & 0.0516 & 0.0509 & 0.0487 & 0.056 && 0.0519 & 0.0512 & 0.0487 & 0.0484 & 0.0431 & 0.05467\\
&&& &&&&& && &&&&& && &&&&&\\

0.5 && FS& 0.5485 & 0.5372 & 0.5402 & 0.5202 & 0.5122 & 0.5230&& 0.5452 & 0.5352 & 0.5322 & 0.5283 & 0.5102 & 0.5153 && 0.5194 & 0.5239 & 0.5275 & 0.5178 & 0.5213 & 0.5064 \\
&& F1& 0.4986 & 0.5017 & 0.511 & 0.4966 & 0.5 & 0.494 && 0.5165 & 0.4963 & 0.4965 & 0.4986 & 0.4896 & 0.5017 && 0.4849 & 0.4957 & 0.5044 & 0.4966 & 0.5051 & 0.496 \\
&& FS& 0.4886 & 0.4995 & 0.5083 & 0.4952 & 0.4986 & 0.5170 && 0.494 & 0.4923 & 0.4965 & 0.4993 & 0.4906 & 0.5 && 0.4797 & 0.4938 & 0.5027 & 0.4958 & 0.5054 & 0.4963 \\
&& FS& 0.5126 & 0.5097 & 0.5145 & 0.4985 & 0.4997 & 0.5175 && 0.5531 & 0.5211 & 0.5117 & 0.5083 & 0.493 & 0.501 && 0.5171 & 0.5097 & 0.5128 & 0.5021 & 0.5069 & 0.4973\\
&&& &&&&& && &&&&& && &&&&&\\

0.95&& FS& 0.9921 & 0.9757 & 0.9648 & 0.9592 & 0.9516 & 0.9527 && 0.9901 & 0.9734 & 0.9695 & 0.962 & 0.9595 & 0.95367 && 0.9818 & 0.9681 & 0.9589 & 0.9601 & 0.9577 & 0.9507\\
&& F1& 0.9726 & 0.9586 & 0.9523 & 0.9502 & 0.9467 & 0.9500 && 0.9673 & 0.9537 & 0.955 & 0.9472 & 0.9508 & 0.9497 && 0.9589 & 0.9503 & 0.9437 & 0.9488 & 0.951 & 0.9487\\
&& BJ& 0.9466 & 0.9524 & 0.9487 & 0.9471 & 0.9459 & 0.9497 && 0.9464 & 0.9449 & 0.9508 & 0.9453 & 0.9507 & 0.95 && 0.9442 & 0.9458 & 0.9414 & 0.9461 & 0.9508 & 0.9483\\
&& B2& 0.9489 & 0.9538 & 0.9496 & 0.9476 & 0.9459 & 0.94975 && 0.9632 & 0.9543 & 0.9573 & 0.9505 & 0.9526 & 0.9507 && 0.9664 & 0.9597 & 0.952 & 0.9548 & 0.9528 & 0.949 \\
&&& &&&&& && &&&&& && &&&&&\\

0.975 && FS& 0.9982 & 0.9922 & 0.9856 & 0.9809 & 0.977 & 0.9755 && 0.9983 & 0.9905 & 0.9868 & 0.9836 & 0.9808 & 0.976 && 0.9954 & 0.9877 & 0.9809 & 0.9806 & 0.9797 & 0.975 \\
&& F1& 0.9902 & 0.9832 & 0.9783 & 0.9759 & 0.9749 & 0.9742 && 0.9868 & 0.9784 & 0.9769 & 0.973 & 0.9763 & 0.974 && 0.9834 & 0.9744 & 0.9701 & 0.9744 & 0.9757 & 0.97367 \\
&& BJ& 0.974 & 0.9762 & 0.975 & 0.975 & 0.9742 & 0.9740 && 0.9725 & 0.9703 & 0.9745 & 0.9718 & 0.9763 & 0.974 &&0.9704 & 0.9711 & 0.9679 & 0.9739 & 0.975 & 0.97367 \\ 
&& B2& 0.9747 & 0.9768 & 0.9754 & 0.9754 & 0.9742 & 0.9743 && 0.9813 & 0.9774 & 0.979 & 0.975 & 0.9775 & 0.9743 &&  0.9848 & 0.9825 & 0.9745 & 0.9776 & 0.9769 & 0.97367\\
\hline
\hline
\end{tabular}
}
\end{center}

\begin{center}
\resizebox{14cm}{4.3 cm}{
\begin{tabular}{c c c cccccc c cccccc }
\hline\hline
&& \multicolumn{6}{c}{$\mu=3$}& &\multicolumn{6}{c}{$\mu=5$}  \\
\cline{4-9}  \cline{11-16}   \\
&&\multicolumn{6}{c}{Sample size}& &\multicolumn{6}{c}{Sample size}  \\
\cline{4-9}  \cline{11-16}  \\
$1-\alpha$ &&& 2&3&4&5&10&30& &2&3&4&5&10&30  \\
\hline
&&& &&&&& && &&&&& \\
0.025&& FS& 0.0262 & 0.0262 & 0.0275 & 0.0256 & 0.0248 & 0.0256 && 0.0235 & 0.0259 & 0.0251 & 0.025 & 0.0232 & 0.0237 \\
&& F1& 0.0256 & 0.026 & 0.027 & 0.0246 & 0.0246 & 0.025 && 0.0234 & 0.0255 & 0.0241 & 0.0246 & 0.0237 & 0.0236 \\
&& BJ& 0.0268 & 0.0264 & 0.0275 & 0.0251 & 0.0246 & 0.025 && 0.024 & 0.0258 & 0.025 & 0.025 & 0.0231 & 0.0234\\
&& B2& 0.0262 & 0.0262 & 0.0271 & 0.0251 & 0.0246 & 0.025 && 0.0235 & 0.0257 & 0.0247 & 0.0247 & 0.0233 & 0.0234 \\
&&& &&&&& && &&&&& \\
0.05&& FS& 0.0521 & 0.05 & 0.0547 & 0.0512 & 0.051 & 0.057 && 0.0488 & 0.0486 & 0.049 & 0.0504 & 0.0489 & 0.051\\
&& F1& 0.0505 & 0.0495 & 0.0534 & 0.0496 & 0.05 & 0.057 && 0.0481 & 0.048 & 0.0482 & 0.0501 & 0.0481 & 0.051\\
&& BJ& 0.0523 & 0.0502 & 0.0541 & 0.0508 & 0.0503 & 0.057 && 0.0492 & 0.0486 & 0.049 & 0.0503 & 0.0485 & 0.051\\
&& B2& 0.0516 & 0.0497 & 0.0537 & 0.0501 & 0.0503 & 0.057 && 0.0486 & 0.0483 & 0.0484 & 0.0503 & 0.0482 & 0.051\\
&&& &&&&& && &&&&& \\
0.5&& FS& 0.5053 & 0.5093 & 0.5101 & 0.5064 & 0.5021 & 0.501 && 0.5062 & 0.4988 & 0.4961 & 0.5034 & 0.4978 & 0.509 \\
&& F1& 0.4933 & 0.5014 & 0.5018 & 0.4994 & 0.4979 & 0.499 && 0.5004 & 0.4943 & 0.4922 & 0.5000 & 0.4953 & 0.5086 \\
&& BJ& 0.4926 & 0.5013 & 0.5016 & 0.4993 & 0.4978 & 0.4987 && 0.501 & 0.494 & 0.4924 & 0.5001 & 0.4952 & 0.5086\\
&& B2& 0.5005 & 0.5041 & 0.5037 & 0.5019 & 0.4983 & 0.4986 && 0.504 & 0.4959 & 0.4935 & 0.5005 & 0.4952 & 0.5083\\
&&& &&&&& && &&&&& \\
0.95&& FS& 0.9587 & 0.9558 & 0.9552 & 0.9528 & 0.9506 & 0.9453 && 0.9513 & 0.9495 & 0.953 & 0.9535 & 0.9491 & 0.955\\
&& F1& 0.9492 & 0.9492 & 0.9509 & 0.9497 & 0.948 & 0.945 && 0.9472 & 0.9472 & 0.9519 & 0.952 & 0.9485 & 0.9543\\
&& BJ& 0.9445 & 0.9456 & 0.9494 & 0.9483 & 0.947 & 0.9447 && 0.9446 & 0.9458 & 0.9508 & 0.9509 & 0.9481 & 0.954\\
&& B2& 0.9633 & 0.9552 & 0.954 & 0.9512 & 0.9484 & 0.9447 && 0.9539 & 0.9493 & 0.9526 & 0.9531 & 0.9486 & 0.9543\\
&&& &&&&& && &&&&& \\
0.975&& FS& 0.9801 & 0.979 & 0.9772 & 0.9748 & 0.9768 & 0.968 && 0.9766 & 0.9752 & 0.9775 & 0.9762 & 0.9752 & 0.973\\
&& F1& 0.9732 & 0.9752 & 0.9741 & 0.9722 & 0.975 & 0.968 && 0.9741 & 0.9734 & 0.9764 & 0.9748 & 0.9742 & 0.973\\
&& BJ& 0.9704 & 0.9733 & 0.9723 & 0.9709 & 0.9747 & 0.968 && 0.9716 & 0.972 & 0.9753 & 0.9741 & 0.9739 & 0.9727\\
&& B2& 0.9846 & 0.9797 & 0.9777 & 0.9733 & 0.9755 & 0.968 && 0.979 & 0.9759 & 0.977 & 0.9754 & 0.9743 & 0.973\\
\hline
\hline
\end{tabular}
}
\end{center}
\end{table}

\begin{table}[]
\caption{Scaled Normal N$(\mu,\mu^{q}), \quad q=1$: (Two sided coverage of $(1-\alpha)$th quantile)\label{t:qTwo}}
\begin{center}
\resizebox{18cm}{2.3 cm}{
\begin{tabular}{c c c cccccc c cccccc  c cccccc}
\hline\hline
&&& \multicolumn{6}{c}{$\mu=0.1$}& &\multicolumn{6}{c}{$\mu=0.5$} & &\multicolumn{6}{c}{$\mu=1$}  \\
\cline{4-9}  \cline{11-16} \cline{18-23}  \\
&&&\multicolumn{6}{c}{Sample size}& &\multicolumn{6}{c}{Sample size} & &\multicolumn{6}{c}{Sample size} \\
\cline{4-9}  \cline{11-16} \cline{18-23}  \\
$\alpha$ &&& 2&3&4&5&10&30& &2&3&4&5&10&30& &2&3&4&5&10&30  \\
\hline 
&&& &&&&& && &&&&& && &&&&&\\
0.95&& FS& 0.9684 & 0.9659 & 0.9548 & 0.9546 & 0.9522 & 0.95225 && 0.9722 & 0.9617 & 0.9594 & 0.9553 & 0.9546 & 0.946 && 0.97 & 0.962 & 0.9545 & 0.9543 & 0.9578 & 0.9443\\
&& F1& 0.9646 & 0.9604 & 0.9501 & 0.9516 & 0.9515 & 0.95225 && 0.9627 & 0.9523 & 0.9514 & 0.9469 & 0.9516 & 0.9443 && 0.9609 & 0.9501 & 0.9454 & 0.9495 & 0.9549 & 0.9443\\
&& BJ& 0.9486 & 0.9526 & 0.9468 & 0.9516 & 0.951 & 0.95175 && 0.9478 & 0.9438 & 0.9484 & 0.9452 & 0.9515 & 0.944 && 0.9463 & 0.9466 & 0.9423 & 0.9486 & 0.954 & 0.9443\\
&& B2&0.9444 & 0.9505 & 0.9463 & 0.9518 & 0.9512 & 0.95175 && 0.9542 & 0.9502 & 0.9523 & 0.948 & 0.9529 & 0.944 && 0.9605 & 0.9579 & 0.949 & 0.9527 & 0.956 & 0.9441\\
&&& &&&&& && &&&&& && &&&&&\\
0.90&& FS& 0.9347 & 0.9179 & 0.908 & 0.9029 & 0.8996 & 0.90475 && 0.9376 & 0.915 & 0.9153 & 0.9083 & 0.9084 & 0.894 && 0.9285 & 0.9154 & 0.9071 & 0.9088 & 0.9117 & 0.8953\\
&& F1& 0.922 & 0.9087 & 0.9 & 0.9001 & 0.8972 & 0.9045 && 0.9202 & 0.9024 & 0.9055 & 0.8975 & 0.9025 & 0.893 && 0.9099 & 0.9011 & 0.8963 & 0.9009 & 0.9084 & 0.893\\
&& BJ& 0.8959 & 0.901 & 0.8957 & 0.897 & 0.8959 & 0.904 && 0.8977 & 0.8926 & 0.9005 & 0.8949 & 0.9019 & 0.893 && 0.8933 & 0.8952 & 0.8929 & 0.8978 & 0.9073 & 0.893\\
&& B2& 0.8904 & 0.899 & 0.8945 & 0.8964 & 0.8956 & 0.90375 && 0.9107 & 0.9004 & 0.9057 & 0.8996 & 0.9039 & 0.8947 && 0.9145 & 0.9085 & 0.9033 & 0.9064 & 0.9097 & 0.8943\\
\hline
\hline
\end{tabular}
}
\end{center}

\begin{center}
\resizebox{14cm}{2.4 cm}{
\begin{tabular}{c c c cccccc c cccccc }
\hline\hline
&&& \multicolumn{6}{c}{$\mu=3$}& &\multicolumn{6}{c}{$\mu=5$}  \\
\cline{4-9}  \cline{11-16}   \\
&&\multicolumn{6}{c}{Sample size}& &\multicolumn{6}{c}{Sample size}  \\
\cline{4-9}  \cline{11-16}  \\
$1-\alpha$ && 2&3&4&5&10&100& &2&3&4&5&10&100  \\
\hline
&&& &&&&& && &&&&& \\
0.95&& FS& 0.9539 & 0.9528 & 0.9497 & 0.9492 & 0.952 & 0.9423 && 0.9531 & 0.9493 & 0.9524 & 0.9512 & 0.9522 & 0.9493\\
&& F1& 0.9476 & 0.9492 & 0.9471 & 0.9476 & 0.9504 & 0.943 && 0.9507 & 0.9479 & 0.9523 & 0.9502 & 0.9512 & 0.9493\\
&& BJ& 0.9436 & 0.9469 & 0.9448 & 0.9458 & 0.9501 & 0.943 && 0.9476 & 0.9462 & 0.9503 & 0.9491 & 0.9509 & 0.949\\
&& B2& 0.9584 & 0.9535 & 0.9506 & 0.9482 & 0.9509 & 0.943 && 0.9555 & 0.9502 & 0.9523 & 0.9507 & 0.9513 & 0.9493\\
&&& &&&&& && &&&&& \\
0.90&& FS& 0.9066 & 0.9058 & 0.9005 & 0.9016 & 0.8996 & 0.8883&& 0.9025 & 0.9009 & 0.904 & 0.9031 & 0.9002 & 0.904\\
&& F1& 0.8987 & 0.8997 & 0.8975 & 0.9001 & 0.898 & 0.888 && 0.8991 & 0.8992 & 0.9037 & 0.9019 & 0.9004 & 0.903 \\
&& BJ& 0.8922 & 0.8954 & 0.8953 & 0.8975 & 0.8967 & 0.8876 && 0.8954 & 0.8972 & 0.9018 & 0.9006 & 0.8996 & 0.903 \\
&& B2& 0.9117 & 0.9055 & 0.9003 & 0.9011 & 0.8981 & 0.8877 && 0.9053 & 0.901 & 0.9042 & 0.9028 & 0.9004 & 0.903 \\
\hline
\hline
\end{tabular}
}
\end{center}
\end{table}

\begin{table}[]
\caption{Length of two sided credible region for $\mu$ in Scaled Normal N$(\mu,\mu^{q}), \quad q=1$:\label{t:qLength}}
\begin{center}
\resizebox{18cm}{2 cm}{
\begin{tabular}{c c c cccccc c cccccc  c cccccc}
\hline\hline
&& \multicolumn{6}{c}{$\mu=0.1$}& &\multicolumn{6}{c}{$\mu=0.5$} & &\multicolumn{6}{c}{$\mu=1$}  \\
\cline{4-9}  \cline{11-16} \cline{18-23}  \\
&&\multicolumn{6}{c}{Sample size}& &\multicolumn{6}{c}{Sample size} & &\multicolumn{6}{c}{Sample size} \\
\cline{4-9}  \cline{11-16} \cline{18-23}  \\
$\alpha$ &&& 2&3&4&5&10&30& &2&3&4&5&10&30& &2&3&4&5&10&30  \\
\hline
&&& &&&&& && &&&&& && &&&&&\\
0.95&& FS& 1.35388 & 0.81608 & 0.55837 & 0.42236 & 0.21588 & 0.10209 && 2.45075 & 1.68304 & 1.3317 & 1.12244 & 0.69992 & 0.37435 && 3.10638 & 2.29652 & 1.88748 & 1.63664 & 1.08409 & 0.59789\\
&& F1& 1.07899 & 0.65473 & 0.4709 & 0.37123 & 0.20545 & 0.10089 && 2.18481 & 1.53574 & 1.23959 & 1.05883 & 0.68 & 0.37095 && 2.91414 & 2.19116 & 1.82143 & 1.59063 & 1.06911 & 0.59519\\
&& BJ& 0.90065 & 0.59879 & 0.44976 & 0.36167 & 0.20458 & 0.10085 && 2.02621 & 1.48871 & 1.2191 & 1.04753 & 0.67744 & 0.37054 && 2.79285 & 2.14955 & 1.79932 & 1.57636 & 1.06472 & 0.59442\\
&& B2& 1.1766 & 0.69552 & 0.48718 & 0.37729 & 0.20595 & 0.10092 && 2.60951 & 1.6889 & 1.30849 & 1.09641 & 0.68508 & 0.37121 && 3.27241 & 2.31822 & 1.8791 & 1.6226 & 1.07371 & 0.59543\\
&&& &&&&& && &&&&& && &&&&&\\
0.90&& FS& 0.98973 & 0.58489 & 0.40703 & 0.31504 & 0.17057 & 0.08402 && 1.89682 & 1.32286 & 1.05959 & 0.90102 & 0.57339 & 0.31147 && 2.47091 & 1.85236 & 1.53532 & 1.33879 & 0.89761 & 0.49922\\
&&F1& 0.76831 & 0.47154 & 0.34842 & 0.28053 & 0.16307 & 0.08307 && 1.68005 & 1.20546 & 0.98658 & 0.85033 & 0.5572 & 0.30865 && 2.3109 & 1.76538 & 1.48078 & 1.30063 & 0.88509 & 0.49696\\
&&BJ& 0.64081 & 0.4361 & 0.33551 & 0.27464 & 0.16249 & 0.08304 && 1.5632 & 1.17205 & 0.97204 & 0.84228 & 0.5553 & 0.30832 && 2.21963 & 1.73431 & 1.46415 & 1.28979 & 0.88163 & 0.49633\\
&& B2& 0.81998 & 0.49647 & 0.35854 & 0.28417 & 0.16339 & 0.08309 && 1.97825 & 1.31223 & 1.03464 & 0.87685 & 0.56087 & 0.30884 && 2.5644 & 1.85554 & 1.52181 & 1.32356 & 0.88842 & 0.49714\\
\hline
\hline
\end{tabular}
}
\end{center}

\begin{center}
\resizebox{14cm}{2.6 cm}{
\begin{tabular}{c c c cccccc c cccccc }
\hline\hline
&&& \multicolumn{6}{c}{$\mu=3$}& &\multicolumn{6}{c}{$\mu=5$}  \\
\cline{4-9}  \cline{11-16}   \\
&&&\multicolumn{6}{c}{Sample size}& &\multicolumn{6}{c}{Sample size}  \\
\cline{4-9}  \cline{11-16}  \\
$1-\alpha$ &&& 2&3&4&5&10&100& &2&3&4&5&10&100  \\
\hline
&&& &&&&& && &&&&& \\
0.95&& FS& 4.93805 & 3.89768 & 3.31892 & 2.93372 & 2.03048 & 1.15517 && 6.28185 & 5.02079 & 4.30686 & 3.8347 & 2.67314 & 1.53541\\
&&F1& 4.86896 & 3.86389 & 3.29744 & 2.91908 & 2.02564 & 1.1543 && 6.2494 & 5.0041 & 4.2966 & 3.82761 & 2.67069 & 1.53497 \\
&& BJ& 4.79247 & 3.82639 & 3.27446 & 2.90282 & 2.02016 & 1.15328 && 6.18417 & 4.97077 & 4.27542 & 3.81265 & 2.66557 & 1.534 \\
&& B2& 5.01579 & 3.9107 & 3.3196 & 2.93084 & 2.02755 & 1.1544 && 6.32213 & 5.02842 & 4.30762 & 3.83365 & 2.67174 & 1.53502\\
&&& &&&&& && &&&&& \\
0.90&& FS& 4.05274 & 3.22139 & 2.75313 & 2.439 & 1.69592 & 0.96712 && 5.19936 & 4.17415 & 3.58887 & 3.19992 & 2.23699 & 1.28628\\
&& F1& 3.99422 & 3.19282 & 2.73498 & 2.42664 & 1.69185 & 0.96637 && 5.17182 & 4.16004 & 3.58021 & 3.19395 & 2.23493 & 1.2859\\
&& BJ& 3.93367 & 3.16275 & 2.71639 & 2.41341 & 1.68732 & 0.9655 && 5.11898 & 4.13278 & 3.56278 & 3.18159 & 2.23068 & 1.28511\\
&& B2& 4.09773 & 3.22577 & 2.75069 & 2.43495 & 1.6932 & 0.96644 && 5.22244 & 4.17701 & 3.58788 & 3.19815 & 2.23567 & 1.28592\\
\hline
\hline
\end{tabular}
}
\end{center}

\end{table}

\begin{table}[]
\caption{Mean Absolute Deviation of the medians from true parameter value for Scaled Normal\label{t:qMAD}}
\begin{center}
\resizebox{18cm}{1.5 cm}{
\begin{tabular}{c c c cccccc c cccccc  c cccccc}
\hline\hline
&& \multicolumn{6}{c}{$\mu=0.1$}& &\multicolumn{6}{c}{$\mu=0.5$} & &\multicolumn{6}{c}{$\mu=1$}  \\
\cline{4-9}  \cline{11-16} \cline{18-23}  \\
&&\multicolumn{6}{c}{Sample size}& &\multicolumn{6}{c}{Sample size} & &\multicolumn{6}{c}{Sample size} \\
\cline{4-9}  \cline{11-16} \cline{18-23}  \\
 &&& 2&3&4&5&10&30& &2&3&4&5&10&30& &2&3&4&5&10&30  \\
\hline
&&& &&&&& && &&&&& && &&&&&\\
&&FS& 0.093022 & 0.070158 & 0.058998 & 0.051342 & 0.034347 & 0.018777 && 0.299045 & 0.242821 & 0.206852 & 0.184339 & 0.127275 & 0.074172 && 0.475668 & 0.383737 & 0.334208 & 0.294802 & 0.207019 & 0.12207\\
&& F1& 0.082243 & 0.064151 & 0.05517 & 0.048795 & 0.033557 & 0.018628 && 0.287522 & 0.235675 & 0.201921 & 0.180667 & 0.126157 & 0.073927 && 0.472906 & 0.382297 & 0.333217 & 0.294453 & 0.206771 & 0.122045\\
&& BJ& 0.07962 & 0.063413 & 0.05494 & 0.048625 & 0.033538 & 0.018642 && 0.285706 & 0.235153 & 0.201856 & 0.180757 & 0.126087 & 0.073904 && 0.473698 & 0.382369 & 0.332932 & 0.294322 & 0.206728 & 0.122044\\
&& B2& 0.091499 & 0.067335 & 0.056539 & 0.049261 & 0.033613 & 0.018646 && 0.31046 & 0.240836 & 0.204446 & 0.182431 & 0.126208 & 0.073907 && 0.475886 & 0.382178 & 0.33241 & 0.293996 & 0.206723 & 0.122048\\
\hline
\hline
\end{tabular}
}
\end{center}

\begin{center}
\resizebox{14cm}{1.6 cm}{
\begin{tabular}{c c c cccccc c cccccc }
\hline\hline
&&& \multicolumn{6}{c}{$\mu=3$}& &\multicolumn{6}{c}{$\mu=5$}  \\
\cline{4-9}  \cline{11-16}   \\
&&&\multicolumn{6}{c}{Sample size}& &\multicolumn{6}{c}{Sample size}  \\
\cline{4-9}  \cline{11-16}  \\
 &&& 2&3&4&5&10&100& &2&3&4&5&10&100  \\
\hline
&&& &&&&& && &&&&& \\
&& FS&0.915859 & 0.743588 & 0.641391 & 0.57712 & 0.404544 & 0.237739 && 1.216561 & 0.987328 & 0.846028 & 0.759754 & 0.538429 & 0.304847\\
&& F1& 0.923632 & 0.747219 & 0.644017 & 0.578878 & 0.405205 & 0.237835 && 1.221673 & 0.990262 & 0.847762 & 0.760816 & 0.538839 & 0.304877\\
&& BJ& 0.923667 & 0.747361 & 0.643863 & 0.578883 & 0.405175 & 0.237835 && 1.221856 & 0.990214 & 0.847798 & 0.760852 & 0.538848 & 0.304872\\
&& B2&0.915956 & 0.745368 & 0.642904 & 0.578482 & 0.405086 & 0.237842 && 1.217969 & 0.989117 & 0.847358 & 0.760632 & 0.538809 & 0.304882\\
\hline
\hline
\end{tabular}
}
\end{center}

\end{table}

\end{document}